\documentclass{amsart}
\usepackage{graphicx}
\usepackage{amssymb,latexsym, hyperref}
\usepackage{a4wide}
\usepackage{psfrag}
\usepackage[all]{xy}
\xyoption{2cell}
\UseTwocells

\let\cal\mathcal
\def\AA{{\cal A}}
\def\BB{{\cal B}}
\def\CC{{\cal C}}
\def\DD{{\cal D}}

\def\FF{{\cal F}}
\def\GG{{\cal G}}

\def\KK{{\cal K}}

\def\NN{{\cal N}}
\def\OO{{\cal O}}
\def\PP{{\cal P}}

\def\SS{{\cal S}}
\def\TT{{\cal T}}
\def\UU{{\cal U}}
\def\VV{{\cal V}}
\def\WW{{\cal W}}

\let\blb\mathbb

\def\bX{{\blb X}}

\def\bP{{\blb P}}

\def\bZ{{\blb Z}}

\def\bN{{\blb N}}

\def\bZ{{\blb Z}}

\def\bK{{\blb K}}

\let\frak\mathfrak

\def\pp{\frak{p}}

\newcommand{\proj}{\operatorname{proj}}

\def\Mod{\operatorname{Mod}}

\def\mod{\operatorname{mod}}

\def\Supp{\mathop{\text{\upshape Supp}}}

\def\coh{\mathop{\text{\upshape{Coh}}}}

\def\Spec{\operatorname {Spec}}

\def\rep{\operatorname{rep}}

\def\Ext{\operatorname {Ext}}

\def\Hom{\operatorname {Hom}}

\def\End{\operatorname {End}}
\def\RHom{\operatorname {RHom}}

\def\im{\operatorname {im}}

\def\cone{\operatorname {cone}}
\def\coker{\operatorname {coker}}
\def\ker{\operatorname {ker}}

\def\End{\operatorname {End}}

\def\rk{\operatorname {rk}}

\DeclareMathOperator{\thick}{thick}
\DeclareMathOperator{\wide}{wide}

\newcommand\Db{D^{b}}
\newcommand\Kb{K^{b}}

\newcommand{\PW}{\PP\WW}

\renewcommand\t{\tau}

\newtheorem{lemma}{Lemma}[section]
\newtheorem{proposition}[lemma]{Proposition}
\newtheorem{theorem}[lemma]{Theorem}
\newtheorem{corollary}[lemma]{Corollary}

\theoremstyle{definition}

\newtheorem{example}[lemma]{Example}
\newtheorem{definition}[lemma]{Definition}

{

}

\theoremstyle{remark}

\newtheorem{remark}[lemma]{Remark}

\newdimen\uboxsep \uboxsep=1ex
\def\uboxn#1{\vtop to 0pt{\hrule height 0pt depth 0pt\vskip\uboxsep
\hbox to 0pt{\hss #1\hss}\vss}}

\def\uboxs#1{\vbox to 0pt{\vss\hbox to 0pt{\hss #1\hss}
\vskip\uboxsep\hrule height 0pt depth 0pt}}

\def\Ob{\operatorname{Ob}}

\def\Gen{\operatorname{Gen}}

\newcommand\exa{\nopagebreak \begin{center}\smallskip \nopagebreak               \begin{minipage}[t]{6cm}\sloppy}
\newcommand\exb{\end{minipage}\kern 1cm\begin{minipage}[t]{8cm}\sloppy}
\newcommand\exc{\end{minipage}\kern -3cm \smallskip\end{center}}

\title{$t$-structures for hereditary categories}
\author{Donald Stanley}
\address{Donald Stanley, Dept. of Math. \& Stats. \\ University of Regina \\ Regina, Canada S4S 4A5}\email{donald.stanley@uregina.ca}
\author{Adam-Christiaan van Roosmalen}
\address{Adam-Christiaan van Roosmalen, Dept. of Math. \& Stats. \\ University of Regina \\ Regina, Canada S4S 4A5}\email{vanroosa@uregina.ca}

\begin{document}
\date{\today}

\bibliographystyle{amsplain}

\begin{abstract}
We study aisles in the derived category of a hereditary abelian category.  Given an aisle, we associate a sequence of subcategories of the abelian category by considering the different homologies of the aisle.  We then obtain a sequence, called a narrow sequence.  

We then prove that a narrow sequence in a hereditary abelian category consists of a nondecreasing sequence of wide subcategories, together with a tilting torsion class in each of these wide subcategories.  Furthermore, there are relations these torsion classes have to satisfy.  These results are sufficient to recover known classifications of $t$-structures for smooth projective curves, and for finitely generated modules over a Dedekind ring.

In some special cases, including the case of finite dimensional modules over a finite dimensional hereditary algebra, we can reduce even further, effectively decoupling the different tilting torsion theories one chooses in the wide subcategories.
\end{abstract}

\maketitle

\tableofcontents

\section{Introduction}

Let $R$ be a commutative noetherian ring.  In \cite{Stanley10}, an invariant of an aisle $\UU \subseteq \Db \mod R$ was given as a function
$$\varphi_\UU: \bZ \longrightarrow \{\mbox{Subsets of $\Spec R$ closed under specialization}\}.$$
For any $n \in \bZ$ one obtains $\varphi_\UU(n)$ as follows.  First truncate $\UU$ above $n$ with the standard truncation to obtain $\t^{\geq -n} \UU$ and then define $\varphi_\UU(n)$ to be the thick closure of $\tau^{\geq -n} \UU$.  By results of Hopkins and Neeman, these thick subcategories are classified exactly by the subsets of $\Spec R$ which are closed under specialization.  If the ring $R$ is nice enough (for example, if $\Db \mod R$ admits a dualizing complex), then one can use the above function to classify all $t$-structures in $\mod R$ (see \cite{TarrioLeovigildoJeremiasSaorin10}).

In this paper, we want to change the setting of affine algebraic geometry to a more noncommutative geometric setting, replacing $\mod R$ by an abelian category $\AA$.  This category $\AA$ can be thought of as the category of (quasi-)coherent sheaves on some noncommutative variety.  For the rest of this introduction, we will restrict ourselves to the case where $\AA$ is hereditary (i.e. has global dimension at most one).  This then includes the cases where $\AA$ is the category of (quasi-)coherent sheaves on a smooth projective curve, or when $\AA$ is the category of (finite dimensional) modules over a (finite dimensional) hereditary algebra.

In general however, it is not clear whether there is a geometric object, such as $\Spec R$, to classify against.  Moreover, knowing the thick closures of $\tau^{\geq -n} \UU$ is in most cases not enough to recover $\UU$ (see for example \cite{GorodentsevKuleshovRudakov04} or Section \ref{section:ApplicationCurves}); the reason for this is that torsion classes and thick subcategories will not coincide in our setting as they do in the case of a noetherian commutative ring (see \cite{StanleyWang11}).

Thus let $\UU$ be a preaisle in $\Db \AA$, and instead of considering the thick closures of $\tau^{\geq -n} \UU$, we will consider the $n^\text{th}$ homology $H_n \UU$ as a subcategory of $\AA$.  In order for this transition between subcategories of the derived category and subcategories of the abelian category to work smoothly, we will restrict ourselves to homology-determined preaisles.  A preaisle $\UU$ is said to be \emph{homology-determined} if $X \in \UU$ is equivalent to $H_n(X) \in H_n(\UU)$, for all $n \in \bZ$.  For a hereditary category $\AA$, an aisle in $\Db \AA$ is always homology-determined (Proposition \ref{proposition:RetractsCoproducts}) but preaisles are not necessarily homology-determined (Example \ref{example:BadPreaisles}).  When $\AA$ is not hereditary, then aisles need not be homology-determined (Example \ref{example:Preaisle}).

For a homology-determined preaisle in $\Db \AA$, one can describe each subcategory $H_n(\UU) \subseteq \AA$ as a \emph{narrow} subcategory of $\AA$ (\cite{StanleyWang11}), these are by definition full subcategories closed under cokernels and extensions.  When $\AA$ is hereditary, another characterization of a narrow subcategory is given in Corollary \ref{corollary:NarrowIsPretorsion} where it is shown that narrow subcategories are exactly given by tilting nullity classes in wide subcategories of $\AA$ (see Proposition \ref{proposition:GeneratedInOneStep}).  Recall that a subcategory of $\AA$ is called \emph{wide} if it is closed under kernels, cokernels, and extension.

A narrow sequence is a nondecreasing sequence of narrow subcategories, satisfying an additional condition (see Definition \ref{definition:NarrowSequence}).  Thus the concept of a narrow sequence is one in abelian categories.  The following theorem (Theorem \ref{theorem:NarrowPreaisle} in the text) relates this concept to homology-determined aisles, which are defined in the setting of derived categories.

\begin{theorem}\label{theorem:Introduction}
Let $\AA$ be an abelian category.  There are bijections
$$\begin{array}{lcr}
\left\{
\mbox{Narrow sequences in $\AA$}
\right\}&
\stackrel{\sim}{\longleftrightarrow}&
\left\{ \mbox{Homology-determined preaisles in $\Db \AA$} \right\}.
\end{array}$$
\end{theorem}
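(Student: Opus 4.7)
The plan is to exhibit the bijection explicitly via mutually inverse maps and verify well-definedness in both directions. Define
\[\phi: \{\text{homology-determined preaisles in } \Db\AA\} \longrightarrow \{\text{narrow sequences in } \AA\}, \qquad \UU \longmapsto (H_n(\UU))_{n \in \bZ},\]
viewing each $H_n(\UU)$ as a full subcategory of $\AA$, and
\[\psi: \{\text{narrow sequences in } \AA\} \longrightarrow \{\text{homology-determined preaisles}\}, \qquad (\NN_n) \longmapsto \{X \in \Db\AA : H_n(X) \in \NN_n \text{ for all } n\}.\]
The identity $\psi \circ \phi = \id$ is essentially a restatement of the homology-determined condition: $X$ lies in $\UU$ if and only if $H_n(X) \in H_n(\UU)$ for all $n$. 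For $\phi \circ \psi = \id$, the inclusion $H_n(\psi((\NN_k))) \subseteq \NN_n$ is built into the definition of $\psi$; for the reverse inclusion, given $A \in \NN_n$ I would form the stalk complex $A[n]$ and note that $H_n(A[n]) = A \in \NN_n$ and $H_k(A[n]) = 0 \in \NN_k$ for $k \neq n$ (using that every narrow subcategory contains the zero object), so $A[n] \in \psi((\NN_k))$ and hence $A \in H_n(\psi((\NN_k)))$.

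The bulk of the work is to verify that $\phi$ and $\psi$ take values in the asserted targets. For $\phi$: each $H_n(\UU)$ is narrow, which should follow from Corollary \ref{corollary:NarrowIsPretorsion} together with the preaisle axioms of $\UU$; the sequence is nondecreasing because closure of $\UU$ under $[1]$ combined with the identity $H_n(X[1]) = H_{n-1}(X)$ forces $H_{n-1}(\UU) \subseteq H_n(\UU)$; the additional condition of Definition \ref{definition:NarrowSequence} is verified by producing triangles in $\UU$ whose long exact sequence realises the required compatibility between consecutive $H_n(\UU)$.

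For $\psi$: the homology-determined property is immediate from the definition, and closure under $[1]$ is precisely the nondecreasing condition on $(\NN_n)$. The main obstacle is closure under extensions. Given a triangle $X \to Y \to Z \to X[1]$ with $X, Z \in \psi((\NN_n))$, the long exact sequence of homology produces, for each $n$, a short exact sequence
\[0 \to \coker(\partial_{n+1}) \to H_n(Y) \to \ker(\partial_n) \to 0,\]
where $\partial_k: H_k(Z) \to H_{k-1}(X)$ denotes the connecting morphism. One must show that both $\coker(\partial_{n+1})$ and $\ker(\partial_n)$ lie in $\NN_n$: the former is $H_n(X)$ modulo the image of a morphism from $\NN_{n+1}$ into $\NN_n$, and the latter is a subobject of $H_n(Z) \in \NN_n$ arising as the kernel of a morphism into $\NN_{n-1}$. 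Extension closure of $\NN_n$ then yields $H_n(Y) \in \NN_n$.

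The hard part is thus the extension closure for $\psi((\NN_n))$: the compatibility axiom in the definition of narrow sequence must be calibrated precisely to capture the flow between consecutive terms $\NN_{n+1} \to \NN_n \to \NN_{n-1}$ through connecting morphisms. The hereditary hypothesis on $\AA$ is used implicitly here to keep this interaction controllable, since it ensures the long exact sequence is assembled from short exact sequences of genuine cokernels and kernels in $\AA$. Once this extension-closure step is secured, both well-definedness statements and the mutual inversion assemble immediately into the claimed bijection.
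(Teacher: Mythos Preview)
Your overall strategy matches the paper's proof exactly: your $\phi$ and $\psi$ are the paper's $\mu$ (Definition \ref{definition:Mu}) and $\theta$ (Definition \ref{definition:Theta}), the well-definedness checks become Propositions \ref{proposition:PreaisleNarrow} and \ref{proposition:NarrowPreaisle}, and the mutual inversion is Theorem \ref{theorem:NarrowPreaisle}.

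Two corrections are needed, however. First, the theorem is stated and proved for an arbitrary abelian category $\AA$; your remark that ``the hereditary hypothesis on $\AA$ is used implicitly'' is incorrect. The long exact homology sequence attached to a triangle, and its decomposition into short exact sequences via kernels and cokernels of the connecting maps, is valid in any abelian category. Nothing hereditary is invoked in Section \ref{Section:NarrowSequence}.

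Second, your appeal to Corollary \ref{corollary:NarrowIsPretorsion} to establish that each $H_n(\UU)$ is narrow is misplaced: that corollary goes in the opposite direction (a narrow subcategory is a tilting nullity class in its wide closure) and does require $\AA$ hereditary. The paper instead observes in Lemma \ref{lemma:OnlySequence} that once conditions (1) and (2) of Definition \ref{definition:NarrowSequence} are verified for the sequence $(H_n(\UU))_n$, narrowness of each term is automatic. Condition (2) itself is checked in Proposition \ref{proposition:PreaisleNarrow} by forming the cones on $f[k]: A[k]\to B[k]$ and $g[k-1]: D[k-1]\to E[k-1]$ inside $\UU$ and reading off $\coker f$ and $\ker g$ as their $k$-th homologies.

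Finally, for extension closure of $\psi((\NN_n))$ the paper takes a slightly shorter route than yours: rather than splitting into $\coker(\partial_{n+1})$ and $\ker(\partial_n)$ and then invoking extension closure, it applies condition (2) of Definition \ref{definition:NarrowSequence} in a single stroke to the five-term exact segment
\[
H_{n+1}(Z) \to H_n(X) \to H_n(Y) \to H_n(Z) \to H_{n-1}(X).
\]
Your decomposition works too, since each piece is itself a special case of condition (2), but this is three applications where one suffices.
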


With some basic knowledge of the category $\coh \bP^1$ of coherent sheaves on a projective line, one can use this theorem to classify all $t$-structures on $\coh \bP^1$ (see Corollary \ref{corollary:AislesProjectiveLine}).  We thus recover the classification given in \cite{GorodentsevKuleshovRudakov04}.

One can now wonder which narrow sequences $(\NN(n))_{n \in \bZ}$ correspond to aisles under the bijection in Theorem \ref{theorem:Introduction}.  One quickly observes that each narrow subcategory must be coreflective, i.e. the embedding $\NN(n) \to \AA$ has a right adjoint.  This, however, is not sufficient.  One has to be able to ``glue'' these right adjoints together to form a right adjoint of the embedding of the corresponding preaisle $\UU$ into $\Db \AA$.  We will show that this can be done when, for example, $\AA$ has enough injectives.  The following theorem is Theorem \ref{theorem:EnoughInjectives}.

\begin{theorem}\label{theorem:IntroductionInjectives}
Let $\AA$ be a hereditary category with enough injectives.  There are bijections
$$\begin{array}{lcr}
\left\{ \mbox{$t$-structures on $\Db \AA$} \right\}&
\stackrel{\sim}{\longleftrightarrow}&
\left\{ \mbox{Coreflective narrow sequences in $\AA$} \right\}.
\end{array}$$
\end{theorem}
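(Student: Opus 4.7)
The strategy is to restrict the bijection of Theorem~\ref{theorem:Introduction} to aisles on one side, and show that its image is exactly the coreflective narrow sequences on the other. Since $\AA$ is hereditary, Proposition~\ref{proposition:RetractsCoproducts} guarantees that every aisle is already homology-determined, so aisles form a subset of the homology-determined preaisles; the task is to identify which narrow sequences correspond to them.

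For the forward direction, assume $\UU$ is an aisle with right adjoint $R \colon \Db\AA \to \UU$. I would define a candidate coreflector $r_n \colon \AA \to \NN(n)$ by $r_n(Y) := H_n(R(Y[n]))$. Since $\UU$ is homology-determined, $N \in \NN(n)$ if and only if $N[n] \in \UU$, so any morphism $f \colon N \to Y$ with $N \in \NN(n)$ lifts to $f[n] \colon N[n] \to Y[n]$ between objects with $N[n] \in \UU$; this factors uniquely through the counit $R(Y[n]) \to Y[n]$, and applying $H_n$ yields the desired unique factorization of $f$ through $r_n(Y) \to Y$.

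For the reverse direction, given a coreflective narrow sequence with coreflectors $r_n$, I need to construct a right adjoint $R$ to the preaisle inclusion $\UU \hookrightarrow \Db\AA$. Since $\AA$ is hereditary, $\Ext^{\geq 2}_\AA$ vanishes, so for any $X \in \Db\AA$ the truncation triangles $\tau^{\leq n-1} X \to \tau^{\leq n} X \to H_n(X)[n] \to$ split, giving a (non-natural) isomorphism $X \cong \bigoplus_n H_n(X)[n]$ and the decomposition
\[ \Hom_{\Db\AA}(Y, X) \cong \bigoplus_n \Hom_\AA(H_n Y, H_n X) \oplus \bigoplus_n \Ext^1_\AA(H_n Y, H_{n-1} X). \]
I would then set $R(X) := \bigoplus_n r_n(H_n X)[n]$ equipped with the obvious map to $X$ built from the counits $r_n(H_n X) \to H_n X$, and verify the adjunction $\Hom(Y, R(X)) \cong \Hom(Y, X)$ for $Y \in \UU$ on each summand: the $\Hom$-pieces are controlled directly by the coreflection property of $r_n$, while the $\Ext^1$-pieces require invoking the compatibility conditions of a narrow sequence (Definition~\ref{definition:NarrowSequence}) relating $\NN(n)$ to $\NN(n-1)$.

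The principal obstacle is twofold. First, the $\Ext^1$-bijection $\Ext^1(H_n Y, r_{n-1} H_{n-1} X) \cong \Ext^1(H_n Y, H_{n-1} X)$ for $H_n Y \in \NN(n)$ must be deduced, via the long exact sequence, from vanishing of $\Hom$ and $\Ext^1$ out of $\NN(n)$ into the ``invisible part'' of $H_{n-1} X$ with respect to $\NN(n-1)$; the narrow sequence axioms are precisely what encode this vanishing, and checking it is the delicate calculational core of the argument. Second, one must upgrade the pointwise construction to a functor: since $X \cong \bigoplus H_n X[n]$ is not natural, I would use the enough injectives hypothesis to replace each $X$ by a canonical injective resolution (where morphism lifts are available) and transport the construction, or alternatively derive functoriality formally from the universal property once the adjunction bijection is in hand.
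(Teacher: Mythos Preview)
Your forward direction is fine and essentially matches the paper's Proposition~\ref{proposition:RestrictingAdjoints}.

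The reverse direction has a genuine gap. Your candidate $R(X) = \bigoplus_n r_n(H_n X)[n]$ is not the right object, and the $\Ext^1$-bijection you need,
\[
\Ext^1_\AA(N, r_{n-1} Z) \cong \Ext^1_\AA(N, Z)\quad\text{for all } N \in \NN(n),
\]
does \emph{not} follow from the narrow sequence axioms. Example~\ref{example:NotEnoughInjectives} in the paper exhibits a coreflective narrow sequence in $\mod \bZ$ (take $\NN(k)$ to be the finite abelian groups for all $k$) for which your argument would go through verbatim, yet the associated preaisle is not an aisle. Concretely, with $X = \bZ[0]$ and $Y = (\bZ/p\bZ)[-1] \in \UU$ one has $\Hom(Y,X) = \Ext^1(\bZ/p\bZ,\bZ) \cong \bZ/p\bZ$, while your $R(X)$ is zero. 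So your proof, as written, would establish the theorem without the enough-injectives hypothesis, which is false. Note also that even when the aisle exists, Lemma~\ref{lemma:HereditaryAdjoints} shows $(A[k])_\UU$ generally has a nonzero homology in degree $k-1$ as well as degree $k$, so your formula is already structurally too small.

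The paper uses the enough-injectives hypothesis in a completely different place: not for functoriality (which, as you note, comes for free once the universal property is known), but to guarantee that the embedding $\Db(\wide \NN(k)) \to \Db\AA$ has a right adjoint (Proposition~\ref{proposition:EnoughInjectives}). With this in hand, the paper avoids constructing the right adjoint of $\UU \hookrightarrow \Db\AA$ directly. Instead it runs a cut-and-paste argument (Corollary~\ref{corollary:Pasting}): each slice $\UU|_k^k$ is an aisle in $\thick \NN(k)$ because $\NN(k)$ is a tilting torsion class there, hence an aisle in $\Db\AA$ by composing with the right adjoint of $\thick \NN(k) \to \Db\AA$; then the slices are glued via Proposition~\ref{proposition:Chen} and the limiting argument of Corollary~\ref{corollary:UnionOfAisles}. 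If you want to salvage a direct construction, the missing ingredient is precisely this right adjoint at the level of the wide closures, and you should explain how enough injectives supplies it.
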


Theorem \ref{theorem:IntroductionInjectives}, together with Proposition \ref{proposition:SequencesAisles} which leads up to this theorem, is sufficient to describe the $t$-structures on $\coh \bX$ where $\bX$ is a smooth projective curve of genus at least one (see Theorem \ref{theorem:AislesCurves}), and on $\mod R$ where $R$ is a Dedekind domain (see Theorem \ref{theorem:Dedekind}).  The first case has already been covered in \cite{GorodentsevKuleshovRudakov04} and the second case follows from \cite{TarrioLeovigildoJeremiasSaorin10}.

There is some redundant information in the definition of a narrow sequence $(\NN(k))_{k \in \bZ}$.  For example, it follows from Corollary \ref{corollary:GrowingFastEnough} that $\NN(n+1)$ must contain the wide closure of $\NN(n)$.  Assume now that the abelian category satisfies the following property: for each aisle $\UU$, the thick subcategory of $\Db \AA$ generated by $\tau^{\geq -n} \UU$ is both reflective and coreflective, i.e. the embedding of the thick closure of $\tau^{\geq -n} \UU$ into $\Db \AA$ has both a left and a right adjoint.  The right adjoint is automatic if $\AA$ has enough injectives; the left adjoint is present, for example, when $\AA \cong \rep Q$ where $\rep Q$ is the category of finite dimensional representations of a finite acyclic quiver $Q$.

When $\AA$ satisfies the additional condition above, one can further simplify the data required to classify aisles in $\Db \AA$.  Instead of a narrow sequence, where the narrow subcategories are linked together by exact sequences as in Definition \ref{definition:NarrowSequence}, it suffices to give the wide closure $\WW(k)$ of each $\NN(k)$, together with a tilting torsion class in $\WW(k) \cap {}^\perp \WW(k-1)$, this tilting torsion class is $\NN(k) \cap {}^\perp \WW(k-1)$.  There are two main advantages of this approach.  The first is that wide subcategories and tilting torsion theories are more studied, and hence better understood, than narrow subcategories.  The second advantage is that any nondecreasing sequence of wide subcategories will suffice for $(\WW(k))_{n \in \bZ}$, and that there are no restrictions on the chosen tilting torsion class induced by the choice of the other tilting torsion classes.  Thus where the sequence $(\NN(k))_{k \in \bZ}$ satisfies some extra conditions, the new sequence is essentially decoupled.

For the category $\AA$, we define $\Delta(\AA)$ to consist of the pairs $(f,t_f)$ where $(f(n) \subseteq \AA)_{n \in \bZ}$ is a nondecreasing sequence of coreflective wide subcategory, and where $t_f(n) \subseteq f(n) \cap {}^\perp f(n-1)$ is a tilting torsion theory.  Our main theorem is then the following (see Theorem \ref{theorem:Reduced}).

\begin{theorem}
Let $\AA$ be a hereditary category such that for every coreflective wide subcategory $\WW$ of $\AA$, the canonical functor $\Db \WW \to \Db \AA$ has a left and a right adjoint.  Then the functions $\Xi$ and $\Psi$ from Definitions \ref{definition:Xi} and \ref{definition:Psi} respectively yield bijections
$$\begin{array}{lcr}
\{\mbox{$t$-structures on $\Db \AA$}\}&
\stackrel{\sim}{\longleftrightarrow}&
\Delta(\AA).
\end{array}$$
\end{theorem}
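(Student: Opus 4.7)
The plan is to route everything through the bridge already established in Theorem~\ref{theorem:IntroductionInjectives} and Proposition~\ref{proposition:SequencesAisles}: under the double-adjoint hypothesis on $\AA$ (which is stronger than having right adjoints alone, since a left adjoint to $\Db \WW \to \Db \AA$ can for instance be obtained when $\AA$ has enough injectives and additional projective-like structure), $t$-structures on $\Db \AA$ are in bijection with coreflective narrow sequences $(\NN(n))_{n\in\bZ}$ in $\AA$. The remaining task is then to show that $\Xi$ and $\Psi$ restrict to mutually inverse bijections between coreflective narrow sequences and pairs $(f,t_f) \in \Delta(\AA)$.

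In the forward direction, starting from a coreflective narrow sequence $(\NN(n))_{n\in\bZ}$, one would set $f(n) := \WW(n)$, the wide closure of $\NN(n)$, and $t_f(n) := \NN(n) \cap {}^\perp \WW(n-1)$. Three verifications are needed: each $\WW(n)$ is coreflective (the wide closure of a coreflective narrow subcategory inherits coreflectivity, with the hereditary hypothesis ensuring that kernels and cokernels in $\AA$ behave well at the derived level), the sequence $(\WW(n))_n$ is nondecreasing (Corollary~\ref{corollary:GrowingFastEnough}), and $t_f(n)$ is a tilting torsion class in $\WW(n) \cap {}^\perp \WW(n-1)$ (which follows from the characterization of narrow subcategories as tilting nullity classes in wide subcategories, given by Corollary~\ref{corollary:NarrowIsPretorsion} and Proposition~\ref{proposition:GeneratedInOneStep}, after isolating the ``new'' piece of $\NN(n)$ beyond $\WW(n-1)$).

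Conversely, given $(f,t_f) \in \Delta(\AA)$, I would define $\NN(n) \subseteq f(n)$ inductively as the extension closure of $\NN(n-1) \cup t_f(n)$ inside $f(n)$, or equivalently as the subcategory of objects $X \in f(n)$ whose image under the left adjoint to the inclusion $f(n-1) \hookrightarrow f(n)$ (which exists by the double-adjoint hypothesis applied to $\Db f(n-1) \to \Db f(n)$, and which descends to the abelian level via $H_0$ since $\AA$ is hereditary) lies in $t_f(n)$. An inductive argument then confirms that this produces a coreflective narrow sequence, and the decoupling is genuine: the left adjoint cleanly splits off the ${}^\perp f(n-1)$-component, so any tilting torsion class inside $f(n) \cap {}^\perp f(n-1)$ may be prescribed independently at each level.

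The main obstacle will be verifying that these two constructions are mutually inverse. In one composition one must show that $\NN(n)$ is reconstructed exactly as the extension closure of $\NN(n-1)$ with $\NN(n) \cap {}^\perp \WW(n-1)$; this relies on the exact-sequence condition in Definition~\ref{definition:NarrowSequence} together with the left-adjoint hypothesis, which is what licenses the splitting of an arbitrary object of $\NN(n)$ into an ``old'' and a ``new'' part. The other composition is more direct: the wide closure of the reconstructed $\NN(n)$ must recover $f(n)$, and its intersection with ${}^\perp f(n-1)$ must recover $t_f(n)$, which holds because a tilting torsion class generates its ambient wide subcategory. Throughout, the hereditary hypothesis is what allows manipulations of wide and narrow subcategories performed at the abelian level to lift faithfully to $\Db \AA$, so that the derived-level adjoints postulated in the theorem statement are the correct objects controlling the abelian-level constructions.
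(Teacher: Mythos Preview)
Your overall architecture---factor through coreflective narrow sequences and then match those with $\Delta(\AA)$---is different from what the paper does, and the difference matters. The paper does not build $\Psi$ at the abelian level at all: $\Psi$ is defined in Definition~\ref{definition:Psi} by a derived-category construction $\VV = \bigcap_n \bigcup_m \VV(n,m)$ with $\VV(n,m) = \VV^{(m)} \ast \cdots \ast \VV^{(n)} \ast \TT(n-1)$, and the theorem is then immediate from Lemmas~\ref{lemma:PsiXi} and~\ref{lemma:XiPsi}, which show $\Psi\Xi = \id$ and $\Xi\Psi = \id$ directly using the machinery of \S\ref{section:Aisles} (especially Proposition~\ref{proposition:NoChange} and Lemma~\ref{lemma:BigGluing}). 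The key technical point is that $\VV_k = \VV(k,k)_k$ (Corollary~\ref{corollary:GoodApproximation}), so the double limit stabilises degreewise; this is what replaces any induction.

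Your inverse construction has a genuine gap. Defining $\NN(n)$ ``inductively as the extension closure of $\NN(n-1) \cup t_f(n)$'' cannot get started: $n$ ranges over $\bZ$, there is no base case, and the sequence $f$ need not stabilise below. The paper's two-sided limit $\cap_n \cup_m$ is precisely the device that handles this, and it lives in $\Db\AA$, not in $\AA$. Your proposed non-inductive alternative is also not right as written: the left adjoint to $f(n-1)\hookrightarrow f(n)$ lands in $f(n-1)$, so asking that its value lie in $t_f(n)\subseteq {}^\perp f(n-1)$ does not typecheck. Even charitably reinterpreted as a condition on the ${}^\perp f(n-1)$-component of $X$, the reflection at the abelian level is not a single object (the derived reflection $(X[0])_{{}^\perp\TT(n-1)}$ has homology in two degrees by Lemma~\ref{lemma:HereditaryAdjoints}), so the condition needs the derived category to be stated cleanly---which is exactly why the paper works there. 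Finally, since the theorem is about the \emph{specific} $\Psi$ of Definition~\ref{definition:Psi}, even a successful abelian-level bijection would still require checking that it agrees with the paper's $\Psi$; you do not address this.
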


As stated before, the main application of this theorem is the case where $\AA$ is the category of finite dimensional representations of a finite dimensional hereditary algebra over a field.  The in-depth discussion will be given in a follow-up paper.

\textbf{Acknowledgments} The authors would like to thank Jan {\v{S}}{\v{t}}ov{\'{\i}}{\v{c}}ek for meaningful discussion and especially for pointing out Proposition \ref{proposition:Neeman} to us.
The second author also gratefully acknowledges the support of the Hausdorff Center for Mathematics in Bonn and Bielefeld University.
%
%
%
%
%
%

\section{Background and notation}

In this section, we introduce some notation and recall some known properties.  Some result will be used in a slightly different setting than usually, and we provide proofs for those results.  In particular, we would like to point out that Section \ref{subsection:tStructures} contains mainly results from \cite{KellerVossieck88}, but we will use Proposition \ref{proposition:Neeman} (originally from \cite{Neeman10}) to obtain these results in a slightly more general setting.

Almost all the result in this section are likely known 

The results in this section are mostly well-known, or easily proved using 

\subsection{Adjoint functors}

Let $F: \CC \to \DD$ be (covariant) functor.  We will say the functor $R: \DD \to \CC$ is \emph{right adjoint} to $F$ if there are bijections
$$\Hom_\DD(FC,D) \stackrel{\eta_{C,D}}{\longrightarrow} \Hom(C,RD)$$
natural in both components.  If $R$ is right adjoint to $F$, then we will say that $F$ is \emph{left adjoint} to $R$.  We have the following equivalent formulations.

\begin{proposition}
Let $F: \CC \to \DD$ be a functor.  The following are equivalent.
\begin{enumerate}
\item The functor $F$ has a right adjoint.
\item The functor $\Hom_\DD(F-,D)$ is representable for each $D \in \DD$.
\item For any object $D \in \DD$ there is an object $D' \in \CC$ and a map $\varphi_D: FD' \to D$ such that for any $C \in \CC$ there is a bijection $\varphi_D \circ F(-): \Hom_\CC(C,D') \stackrel{\sim}{\rightarrow} \Hom_\DD(FC,D)$.
\end{enumerate}
\end{proposition}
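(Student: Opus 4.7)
The plan is to prove the cycle $(1) \Rightarrow (2) \Rightarrow (3) \Rightarrow (1)$, since these three statements represent progressively weaker-looking but equivalent formulations of the universal property of the unit of an adjunction.

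First I would observe that $(1) \Rightarrow (2)$ is immediate: if $R$ is right adjoint to $F$, then the natural bijections $\Hom_\DD(FC,D) \cong \Hom_\CC(C,RD)$ show that the functor $\Hom_\DD(F-,D)$ is represented by $RD$ for every $D \in \DD$.

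Next, for $(2) \Rightarrow (3)$, suppose $\Hom_\DD(F-,D)$ is representable for each $D$, say by some $D' \in \CC$ together with a natural isomorphism $\alpha_D: \Hom_\CC(-,D') \stackrel{\sim}{\to} \Hom_\DD(F-,D)$. By the Yoneda lemma, such a natural isomorphism corresponds to a single morphism $\varphi_D := \alpha_D(\id_{D'}): FD' \to D$, and naturality of $\alpha_D$ translates exactly into the statement that, for any $C \in \CC$ and $g: C \to D'$, we have $\alpha_D(g) = \varphi_D \circ F(g)$. So the assignment $g \mapsto \varphi_D \circ F(g)$ is the required bijection.

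The substantive step is $(3) \Rightarrow (1)$, where I must actually build the right adjoint. On objects I set $RD := D'$. For a morphism $f: D_1 \to D_2$ in $\DD$, the composition $f \circ \varphi_{D_1}: FRD_1 \to D_2$ lies in $\Hom_\DD(FRD_1, D_2)$, so the bijection from (3) applied to $D_2$ produces a unique morphism $Rf: RD_1 \to RD_2$ with $\varphi_{D_2} \circ F(Rf) = f \circ \varphi_{D_1}$. Uniqueness in the bijection then forces $R(\id) = \id$ and $R(f_2 \circ f_1) = R(f_2) \circ R(f_1)$, so $R$ is a functor. The bijection of (3) is, by construction, $\eta_{C,D}^{-1}: \Hom_\CC(C,RD) \to \Hom_\DD(FC,D)$ given by $g \mapsto \varphi_D \circ F(g)$; naturality in $C$ is immediate from functoriality of $F$ and associativity of composition, and naturality in $D$ follows from the defining equation of $Rf$. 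The main (and essentially only) obstacle is this last verification of bifunctorial naturality, but it reduces to diagram chases using uniqueness in the universal property and should present no real difficulty.
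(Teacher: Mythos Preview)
Your argument is correct and is the standard proof of this classical fact. The paper itself does not give a proof at all: it simply refers the reader to MacLane's \emph{Categories for the Working Mathematician}, so your write-up actually supplies more detail than the paper does.
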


\begin{proof}
We refer to \cite{Maclane71} for the proof.
\end{proof}

The map $\varphi_D: FD' \to D$ from the previous proposition satisfies a universal property.  We will refer to it as the \emph{universal map}.

Let $\CC$ be a full replete (= closed under isomorphisms) subcategory of $\DD$.  We will say that $\DD$ is \emph{reflective} or \emph{coreflective} if and only if the embedding $\CC \to \DD$ has a left or a right adjoint, respectively.  When $\CC$ is a coreflective subcategory of $\DD$, the right adjoint to $\CC \to \DD$ will be denoted by $(-)_\CC$, thus mapping a function $f: X \to Y$ to $f_\CC: X_\CC \to Y_\CC$.  Similarly, for a left adjoint we will write $(-)^\CC$.

In general, the intersection of two coreflective subcategories is not a coreflective subcategory.  We have the following proposition.

\begin{proposition}\label{proposition:IntersectionAdjoint}
Let $\CC,\CC'$ be coreflective subcategories of $\DD$.  If for every $D \in \DD$, we have $(D_\CC)_{\CC'} \in \CC$ then $\CC \cap \CC'$ is a coreflective subcategory of $\DD$.
\end{proposition}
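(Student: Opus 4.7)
The plan is to construct the right adjoint to the inclusion $i: \CC \cap \CC' \hookrightarrow \DD$ explicitly on objects by the formula $D \mapsto (D_\CC)_{\CC'}$, and then verify the universal property through the composition of the two adjunctions we already have.

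First I would check that this assignment indeed takes values in $\CC \cap \CC'$. By construction $(D_\CC)_{\CC'}$ is the $\CC'$-coreflection of $D_\CC$, so it lies in $\CC'$ automatically; the hypothesis supplies exactly the missing ingredient, namely that $(D_\CC)_{\CC'}$ also lies in $\CC$. Functoriality on morphisms is inherited from the functoriality of $(-)_\CC$ and $(-)_{\CC'}$. The candidate universal map $(D_\CC)_{\CC'} \to D$ is the composite of the two coreflection maps $(D_\CC)_{\CC'} \to D_\CC \to D$.

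Second, I would verify the universal property. For $X \in \CC \cap \CC'$, the coreflection of $\CC$ applied to $X$ (viewed in $\DD$) and to $D$ gives a natural bijection $\Hom_\DD(X, D) \cong \Hom_\DD(X, D_\CC)$ using $X \in \CC$; then the coreflection of $\CC'$ applied to $X$ and to the object $D_\CC \in \DD$ gives $\Hom_\DD(X, D_\CC) \cong \Hom_\DD(X, (D_\CC)_{\CC'})$ using $X \in \CC'$. Composing, and unravelling the bijections, shows that every map $X \to D$ in $\DD$ factors uniquely through the candidate universal map $(D_\CC)_{\CC'} \to D$. Since $\CC \cap \CC'$ is full in $\DD$, the hom-set on the right equals $\Hom_{\CC \cap \CC'}(X, (D_\CC)_{\CC'})$, so the universal property of an adjoint is established.

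There is no real obstacle: the content of the proposition is entirely concentrated in the hypothesis $(D_\CC)_{\CC'} \in \CC$, which is precisely what is needed to ensure that iterating the two coreflections lands in the intersection. Once this is noted, the rest is a standard composition-of-adjunctions argument, and no naturality or coherence issues beyond those already present in the individual adjunctions arise.
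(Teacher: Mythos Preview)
Your proof is correct and follows essentially the same approach as the paper: both show that $\Hom_\DD(-,D)|_{\CC\cap\CC'}$ is represented by $(D_\CC)_{\CC'}$, with your version spelling out in detail the two successive adjunction bijections that the paper leaves implicit in a single sentence.
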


\begin{proof}
Note that $\Hom(-,D)\mid_{\CC \cap \CC'}$ is represented by the object $(D_\CC)_{\CC'}$.
\end{proof}

\begin{lemma}\label{lemma:AdjointRetract}
Let $\CC$ be a full coreflective subcategory of $\DD$.  Then $\CC$ is closed under retracts.
\end{lemma}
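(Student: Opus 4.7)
The plan is to show that for any object $D \in \DD$ which is a retract of some $C \in \CC$, the universal map (counit) $\varphi_D : D_\CC \to D$ is an isomorphism; since $\CC$ is full and replete, this forces $D \in \CC$. So fix a section $i : D \to C$ and retraction $r : C \to D$ with $r \circ i = \id_D$, where $C \in \CC$.

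First I would use the universal property of $\varphi_D$ to lift $r$: since $C \in \CC$, the map $r : C \to D$ factors uniquely as $r = \varphi_D \circ \tilde{r}$ for some $\tilde{r} : C \to D_\CC$. Setting $s := \tilde{r} \circ i : D \to D_\CC$, one immediately gets
\[
\varphi_D \circ s = \varphi_D \circ \tilde{r} \circ i = r \circ i = \id_D,
\]
so $\varphi_D$ is a split epimorphism with section $s$.

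The main point is then to prove the reverse identity $s \circ \varphi_D = \id_{D_\CC}$, and this is where the uniqueness clause in the adjunction does the work. Both $s \circ \varphi_D$ and $\id_{D_\CC}$ are morphisms $D_\CC \to D_\CC$ in $\CC$, and after postcomposition with $\varphi_D$ both produce $\varphi_D : D_\CC \to D$. Since the bijection
\[
\Hom_\CC(D_\CC, D_\CC) \;\stackrel{\sim}{\longrightarrow}\; \Hom_\DD(D_\CC, D), \qquad f \mapsto \varphi_D \circ f,
\]
coming from the adjunction (together with fullness of $\CC$) is injective, this forces $s \circ \varphi_D = \id_{D_\CC}$.

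Hence $\varphi_D$ is an isomorphism with inverse $s$, so $D \cong D_\CC \in \CC$, and by repleteness $D \in \CC$. The only place where any care is needed is the second step; all other manipulations are formal consequences of the definition of the counit. No hereditary or abelian hypotheses enter, as expected for a purely categorical statement about coreflective subcategories.
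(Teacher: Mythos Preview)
Your proof is correct and takes a more direct route than the paper's. The paper argues via (co)equalizers: the retract $D$ is realized as the equalizer of the idempotent pair $(e,1)$ on $FC$; applying the right adjoint $(-)_\CC$ (which preserves limits) gives $D_\CC$ as the equalizer of the corresponding pair on $C$; since a split idempotent makes its retract both an equalizer \emph{and} a coequalizer, one then applies the embedding $F$ (which, as a left adjoint, preserves colimits) to conclude $F(D_\CC) \cong D$. Your argument bypasses all of this by showing directly that the counit $\varphi_D$ is invertible, constructing the inverse $s$ from the given section and invoking the injectivity of $\varphi_D \circ (-)$ on $\Hom_\CC(D_\CC,D_\CC)$. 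Your approach is shorter and uses nothing beyond the defining bijection of the adjunction; the paper's approach, while more circuitous here, showcases the interplay between limit-preservation of the right adjoint and colimit-preservation of the embedding, a pattern that can be useful when an explicit inverse is harder to write down.
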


\begin{proof}
Denote the embedding $\CC \to \DD$ by $F$ and let $(-)_\CC$ be its right adjoint.  Let $C \in \CC$ and let $D \in \DD$ be a retract of $FC$.  We want to show that $D$ lies in the essential image of $F$.  By \cite[Proposition 6.5.4]{Borceux94}, we know that $D \to FC$ is the equalizer of
$$\xymatrix@1{FC \ar@<2pt>[r]^{e} \ar@<-2pt>[r]_{1}& FC}$$
where $e \in \End(FC)$ is an idempotent.  Since $(FC)_\CC \cong C$ ($F$ is fully faithful) and $(-)_\CC$ commutes with limits ($(-)_\CC$ has a left adjoint), we know that $D_\CC$ is an equalizer of
$$\xymatrix@1{C \ar@<2pt>[r]^{(e)_\CC} \ar@<-2pt>[r]_{1}& C}$$
where now $(e)_\CC \in \End(C)$ is an idempotent.  Hence $D_\CC \in \CC$ is a retract of $C$, and thus also the coequalizer of the second diagram.  Since $F$ commutes with colimits, we know that $F(D_\CC)$ is the equalizer of the first diagram and hence $F(D_\CC) \cong D$ so that $D$ lies in the essential image of $F$ as required.
\end{proof}

\subsection{Split injectives and Ext injectives}\label{section:Injectives}

Let $\AA$ be an abelian category and let $\CC$ be any full subcategory.  An object $I \in \CC$ is called \emph{$\CC$-split injective} if and only if every monomorphism $I \to M$ in $\AA$ splits ($M \in \CC$).  We will say that $I \in \CC$ is \emph{$\CC$-Ext injective} if and only if $\Ext_\AA(X,I) = 0$, for all $X \in \CC$.

\begin{proposition}
Let $I \in \Ob \CC$ be $\CC$-Ext injective and $I' \in \Ob \CC$ is a retract of $I$, then $I'$ is $\CC$-Ext injective.
\end{proposition}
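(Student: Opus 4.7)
The plan is to exploit additivity (and functoriality in the second argument) of $\Ext_\AA(X,-)$ together with the hypothesis that $I'$ is a retract of $I$ in $\CC \subseteq \AA$.

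Concretely, fix $X \in \Ob \CC$; the goal is to show $\Ext_\AA(X, I') = 0$. By hypothesis there exist morphisms $i: I' \to I$ and $p: I \to I'$ in $\AA$ with $p \circ i = \id_{I'}$. Apply the additive bifunctor $\Ext_\AA(X,-)$ to obtain group homomorphisms
$$\Ext_\AA(X,I') \xrightarrow{\,i_*\,} \Ext_\AA(X,I) \xrightarrow{\,p_*\,} \Ext_\AA(X,I')$$
whose composite is $(p \circ i)_* = \id_{\Ext_\AA(X,I')}$. Thus $\Ext_\AA(X,I')$ is a retract of the abelian group $\Ext_\AA(X,I)$, which vanishes because $I$ is $\CC$-Ext injective and $X \in \CC$. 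A retract of the zero group is zero, so $\Ext_\AA(X,I') = 0$, as required.

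I would then remark that the same argument applies verbatim to $\Ext^n_\AA$ for any $n \geq 1$, should the notion be extended beyond the hereditary case. I do not expect any real obstacle here: the statement is essentially a formal consequence of the fact that any additive functor preserves retracts (or equivalently, sends split idempotents to split idempotents), combined with the vanishing hypothesis on $I$.
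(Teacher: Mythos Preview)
Your proof is correct and follows essentially the same approach as the paper's: the paper simply says that $\Ext_\AA(-,I')$ is a subfunctor of $\Ext_\AA(-,I)$ (via the split mono $I'\to I$), hence vanishes on $\CC$, which is exactly the retraction argument you spell out in more detail.
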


\begin{proof}
Since $\Ext_\AA(-,I')$ is a subfunctor of $\Ext_\AA(-,I)$ and $\Ext_\AA(-,I)\mid_\CC \cong 0$, we have that $\Ext_\AA(-,I')\mid_\CC \cong 0$.
\end{proof}

We have the following proposition.

\begin{proposition}\label{proposition:SplitExt}
Let $\CC$ be a full subcategory of $\AA$ which is closed under extensions and cokernels.  Then an object $I \in \Ob \CC$ is $\CC$-split injective if and only if it is $\CC$-Ext injective.
\end{proposition}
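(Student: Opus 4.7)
The plan is to prove the two implications separately, each of which amounts to a very short argument once we invoke the relevant closure property of $\CC$.

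For the ``Ext injective implies split injective'' direction, I would start with a monomorphism $I \hookrightarrow M$ with $M \in \CC$ and consider the short exact sequence
$$0 \to I \to M \to C \to 0$$
in $\AA$, where $C$ is the cokernel. Because $\CC$ is closed under cokernels, $C$ lies in $\CC$, and the hypothesis that $I$ is $\CC$-Ext injective gives $\Ext_\AA(C,I) = 0$. Therefore the short exact sequence splits, which is what is needed.

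For the converse, I would pick any $X \in \CC$ and any class in $\Ext_\AA(X,I)$, represented by a short exact sequence
$$0 \to I \to E \to X \to 0.$$
Since $I, X \in \CC$ and $\CC$ is closed under extensions, we have $E \in \CC$. The map $I \to E$ is then a monomorphism into an object of $\CC$, so $\CC$-split injectivity of $I$ forces this sequence to split, whence its class in $\Ext_\AA(X,I)$ is zero. As $X$ and the extension class were arbitrary, $\Ext_\AA(-,I)\mid_\CC \cong 0$.

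There is no serious obstacle here; the whole content is simply keeping track of which of the two closure properties (cokernels versus extensions) is needed in which direction so that the relevant third object of the short exact sequence stays inside $\CC$ and the definitions of the two injectivity notions can be applied.
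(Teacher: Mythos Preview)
Your proof is correct and essentially identical to the paper's own proof, merely presenting the two implications in the opposite order. Both arguments use closure under extensions to show that split injectivity implies Ext injectivity, and closure under cokernels for the converse.
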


\begin{proof}
Let $I \in \Ob \CC$ be a $\CC$-split injective, we will show that $I$ is also a $\CC$-Ext injective object.  Let $X \in \CC$ be any object and consider a short exact sequence $0 \to I \to M \to X \to 0$ in $\AA$.  Since $I,X \in \Ob \CC$ and $\CC$ is closed under extensions, we know that $M \in \Ob \CC$.  Thus $I \to M$ splits, and we see that $\Ext_\AA(X,I) = 0$.

For the other direction, let $I \to M$ be a monomorphism.  Since $\CC$ is closed under cokernels, the exact sequence $0 \to I \to M \to X \to 0$ in $\AA$ has entries in $\CC$.  This is split because $I$ is $\CC$-Ext injective.  We find that the original monomorphism $I \to M$ splits.
\end{proof}

\begin{definition}
In the setting of Proposition \ref{proposition:SplitExt}, the object $I$ will be called a $\CC$-injective object.
\end{definition}

There is a dual notion of $\CC$-Ext projectives and $\CC$-split projectives.  If $\CC$ is closed under subobjects and extensions, then these two notions coincide.

\subsection{Derived categories}

We will follow standard notations and conventions about derived categories (see for example \cite{GelfandManin03, Hartshorne66,Keller07}).

Let $\AA$ be an abelian category and denote by $D \AA$, $D^+ \AA$, $D^- \AA$, and $D^b \AA$ the total, the left bounded, the right bounded, and the bounded derived category.  We will denote the $n$-fold suspension (or translation) by $[n]$, for all $n \in \bZ$.

There is a fully faithful function $\AA \to D^*(\AA)$ mapping $A$ to a complex which is the stalk complex of $A$ concentrated in degree 0 is; we will denote this complex by $A[0]$.  Also, we will write $(A[0])[n]$ by $A[n]$.

Taking homologies of objects in $\AA$ defines a homological functor $H: D^* \AA \to \AA$, mapping a triangle $X \to Y \to Z \to X[1]$ to a long exact sequence
$$\cdots \longrightarrow H_{k+1} Z \longrightarrow H_k X \longrightarrow H_k Y \longrightarrow H_k Z \longrightarrow H_{k-1} X \longrightarrow \cdots$$
Note that $H_k(X) = H_0(X[-k])$ and thus $H_k(X) = H^{-k}(X)$.  To avoid cumbersome notation, we will often write $H_k X = X_k$ for an object $X \in D^* \AA$.  Likewise, for a full subcategory $\CC \subseteq D^* \AA$, we will write $\CC_k$ for $H_k(\CC)$.

For a hereditary category $\AA$ it is well-known (see for example \cite{ChenKrause09}) that every object $A \in D^* \AA$ can be written as
$$\prod_{k \in \bZ} A_k[k] \cong A \cong \coprod_{k \in \bZ} A_k[k].$$
In particular, $\Hom_{D^*(\AA)}(A,B) \cong \prod_{k,l} \Ext^{l-k}_{\AA}(A_k,B_l).$

We will often use the following lemma.

\begin{lemma}\label{lemma:HereditaryAdjoints}
Let $\AA$ be a hereditary abelian category, and let $A \in \AA$.  Let $\CC$ be a coreflective subcategory of $D^* \AA$.  Then $H_l((A[k])_\CC) = 0$ for $l \not= k, k-1$.
\end{lemma}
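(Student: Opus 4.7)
The plan is to set $X = (A[k])_\CC$ and argue by contradiction: assume $X_l := H_l(X) \neq 0$ for some $l \notin \{k,k-1\}$, then produce a morphism which must be zero by the adjunction but is visibly nonzero. The point is that the hereditariness of $\AA$ makes $X$ split into a coproduct of stalk shifts, and the adjunction then forces each of these shifts to admit no nontrivial morphism into $X$ unless $l \in \{k,k-1\}$.

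First I would invoke the already recalled fact that, since $\AA$ is hereditary, every object of $D^\ast\AA$ is isomorphic to the coproduct of its shifted homologies. Applied to $X$ this gives $X \cong \coprod_{m \in \bZ} X_m[m]$, with each $X_m[m]$ a retract of $X$. By Lemma \ref{lemma:AdjointRetract}, the coreflective subcategory $\CC$ is closed under retracts, so each $X_m[m]$ lies in $\CC$.

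Next I would use the coreflection adjunction. For any $Y \in \CC$, the universal map $\varphi : X \to A[k]$ yields a bijection
$$\Hom_{\CC}(Y, X) \stackrel{\sim}{\longrightarrow} \Hom_{D^\ast\AA}(Y, A[k]), \qquad f \mapsto \varphi \circ f.$$
Applying this with $Y = X_l[l]$ and using the formula
$\Hom_{D^\ast\AA}(X_l[l], A[k]) \cong \Ext^{k-l}_{\AA}(X_l, A)$
from the discussion preceding the lemma, together with the hereditariness of $\AA$, gives $\Hom_{\CC}(X_l[l], X) = 0$ whenever $k - l \notin \{0,1\}$, i.e.\ whenever $l \notin \{k, k-1\}$.

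The main (and only) potential obstacle is checking that this genuinely produces a contradiction when $X_l \neq 0$. For that I would observe that the coproduct decomposition provides a section $\iota_l : X_l[l] \to X$ (retract of the projection), so $\iota_l$ is nonzero in $D^\ast\AA$, hence nonzero in the full subcategory $\CC$. This contradicts the vanishing just obtained, forcing $X_l = 0$ for all $l \neq k, k-1$, which is the statement of the lemma.
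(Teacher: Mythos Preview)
Your proof is correct and follows essentially the same route as the paper's: both use the hereditary splitting $X \cong \coprod_m X_m[m]$, invoke Lemma~\ref{lemma:AdjointRetract} to place each $X_l[l]$ in $\CC$, apply the adjunction to identify $\Hom_\CC(X_l[l],X)$ with $\Ext^{k-l}_\AA(X_l,A)$, and then conclude from hereditariness that the inclusion of the summand $X_l[l]$ must vanish for $l\notin\{k,k-1\}$. The only cosmetic difference is that you phrase the final step as a contradiction via the section $\iota_l$, whereas the paper states directly that $\Hom_\CC(X_l[l],X)=0$ forces $X_l[l]=0$ because it is a direct summand.
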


\begin{proof}
To ease notation, we write $C = (A[k])_\CC$.  Recall that $\Hom_\CC(-,C) \cong \Hom_{D^* \AA}(-,A[k])\mid_\CC$.  Since $\CC$ is a coreflective subcategory, it is closed under retracts (see Lemma \ref{lemma:AdjointRetract}) and hence $(H_l C)[l] \in \CC$ for each $l \in \bZ$.  We have
$$\Hom_\CC((H_l C)[l],C) \cong \Hom_{D^* \AA}((H_l C)[l],A[k]) \cong \Ext^{k-l}(C,A).$$
Note that the left hand side is zero if and only in $(H_l C)[l]$ is zero (since $(H_l C)[l]$ is a direct summand of $C$).  Since $\AA$ is hereditary, the right hand side can only be nonzero for $l=k,k-1$.  This shows that the left hand side is zero when $l \not= k,k-1$ and thus that $(H_l C)[l] = 0$.
\end{proof}

\begin{proposition}\label{proposition:RestrictingAdjoints}
Let $\AA$ be a hereditary category.  If $\CC$ is a (co)reflective subcategory of $D^* \AA$, then $H_k \CC$ is a (co)reflective subcategory of $\AA$.
\end{proposition}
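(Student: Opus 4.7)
The plan is to handle the coreflective case directly; the reflective case then follows by a dual argument. So assume $\CC \subseteq D^*\AA$ is coreflective, with right adjoint $(-)_\CC$, and fix $k \in \bZ$. I want to construct a right adjoint to the inclusion $H_k\CC \hookrightarrow \AA$, and the natural candidate, for $A \in \AA$, is
\[
A \longmapsto H_k\bigl((A[k])_\CC\bigr).
\]

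First I would check that this object really lies in $H_k\CC$. By Lemma \ref{lemma:HereditaryAdjoints}, $(A[k])_\CC$ has nonzero homology only in degrees $k$ and $k-1$. Since $\AA$ is hereditary, the canonical decomposition recalled before that lemma gives
\[
(A[k])_\CC \;\cong\; H_k\bigl((A[k])_\CC\bigr)[k] \;\oplus\; H_{k-1}\bigl((A[k])_\CC\bigr)[k-1].
\]
By Lemma \ref{lemma:AdjointRetract}, $\CC$ is closed under retracts, so the summand $H_k((A[k])_\CC)[k]$ belongs to $\CC$, which places $H_k((A[k])_\CC)$ in $H_k\CC$ as required.

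Next I would verify the universal property. Given $X \in H_k\CC$, so that $X[k] \in \CC$, the coreflection together with the hereditary splitting above yields
\[
\Hom_\AA(X,A) \;\cong\; \Hom_{D^*\AA}(X[k],A[k]) \;\cong\; \Hom_{\CC}\bigl(X[k],(A[k])_\CC\bigr),
\]
and the latter further splits as
\[
\Hom_\AA\!\bigl(X,H_k((A[k])_\CC)\bigr) \;\oplus\; \Ext^{-1}_\AA\!\bigl(X,H_{k-1}((A[k])_\CC)\bigr).
\]
The $\Ext^{-1}$ term vanishes, and the remaining isomorphism is natural in $X$ and $A$, which exhibits $H_k((A[k])_\CC)$ as representing $\Hom_\AA(-,A)|_{H_k\CC}$. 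This produces the desired right adjoint.

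The reflective case proceeds dually: for $\CC$ reflective with left adjoint $(-)^\CC$, the analogue of Lemma \ref{lemma:HereditaryAdjoints} (with the same proof, using that reflective full subcategories are closed under retracts) shows that $(A[k])^\CC$ has homology concentrated in degrees $k$ and $k+1$, and the same splitting-and-representability argument produces a left adjoint $A \mapsto H_k((A[k])^\CC)$ to $H_k\CC \hookrightarrow \AA$. The only genuine step that requires care is the decomposition of the Hom computation into pieces indexed by the homological degrees of $(A[k])_\CC$; once the vanishing of the $\Ext^{-1}$ (resp.\ $\Ext^1$ in the dual) is observed, the rest is formal.
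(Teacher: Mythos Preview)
Your proof is correct and follows essentially the same route as the paper's: both use Lemma~\ref{lemma:AdjointRetract} to get $X[k]\in\CC$, invoke Lemma~\ref{lemma:HereditaryAdjoints} to restrict the homologies of $(A[k])_\CC$ to degrees $k,k-1$, and then split the Hom-set along the hereditary decomposition to isolate $\Hom_\AA(X,H_k((A[k])_\CC))$. One small slip: in the reflective case the vanishing term is again an $\Ext^{-1}$ (from $\Hom(H_{k+1}(\cdots)[k+1],X[k])$), not an $\Ext^1$, but this does not affect the argument.
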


\begin{proof}
We will only show this statement for coreflective subcategories; the other statement is dual.  Note that by Lemma \ref{lemma:AdjointRetract}, the category $\CC$ is closed under retracts so that for every $X \in H_k \CC \subseteq \AA$, we have $X[k] \in \CC$.

Let $A \in \AA$, and consider the object $B = (A[k])_\CC$.  By Lemma \ref{lemma:HereditaryAdjoints}, the homologies of $B$ can only be nonzero in degrees $k,k-1$ (see Lemma \ref{lemma:HereditaryAdjoints}).  In particular, for every $X \in H_k \CC$ we have
\begin{eqnarray*}
\Hom_\AA(X,A) &\cong& \Hom_{D^* \AA} (X[k],A[k]) \\
&\cong& \Hom_\CC(X[k],B) \\
&\cong& \Hom_\CC(X[k],B_{k-1}[k-1]) \oplus \Hom_\CC(X[k], B_k[k]) \\
&\cong& \Hom_\CC(X[k], B_k[k]) \cong \Hom_\AA(X,B_k)
\end{eqnarray*}
natural in the first component.  This shows that $\Hom(-,A)\mid_{H_k \CC} \cong \Hom(-,B_k)$ and thus $H_k \CC$ is a coreflective subcategory of $\AA$.
\end{proof}

\begin{remark}
The converse statement does not holds, i.e. if $\CC$ is a full subcategory of $D^* \AA$ such that $H_k \CC \to \AA$ has a right adjoint for all $k \in \bZ$, then $\CC \to D^* \AA$ does not necessarily have a right adjoint.  We refer to Example \ref{example:NotEnoughInjectives}.
\end{remark}

\subsection{Wide and thick subcategories}

Let $\AA$ be an abelian subcategory.  A \emph{wide} subcategory $\WW \subseteq \AA$ is a full subcategory closed under kernels, cokernels, and extensions.  For a full subcategory $\BB \subseteq \AA$, we will write $\wide_\AA(\BB)$ for the wide closure of $\BB$, that is for the intersection of all wide subcategories of $\AA$ containing $\BB$.  When the ambient category is understood, we will also write $\wide \BB$ instead of $\wide_\AA(\BB)$.  Likewise, for $E \in \AA$ we will write $\wide_\AA(E)$ or $\wide E$ for the wide closure of $E$.

Let $\TT$ be a triangulated subcategory.  A full triangulated subcategory is called thick if it is closed under direct summands.  Given a full subcategory $\SS \subseteq \TT$, we will write $\thick_\TT(\SS)$ for the thick closure of $\SS$ in $\TT$, thus $\thick_\TT(\SS)$ is the intersection of all thick subcategories of $\TT$ containing $\SS$.  Again, when the ambient category $\TT$ is understood, we will also write $\thick \SS$ instead of $\thick_\TT(\SS)$.  Similarly, for $T \in \TT$ we denote by $\thick_\TT(T)$ or $\thick T$ the thick closure of $T$.

For any abelian category $\AA$ and any full $\BB \subseteq \AA$, we have that $\wide \BB$ is an abelian subcategory of $\AA$.  When $\AA$ is hereditary, the embedding lifts to a full embedding $\Db \wide \BB \to \Db \AA$ in the obvious way.  The essential image is the full subcategory $\Db_{\wide \BB} \AA \subseteq \Db \AA$ of all complexes whose homologies are in $\wide \BB$.

Note that $\thick_{\Db \AA}(\BB[0]) \cong \Db \wide_\AA(\BB)$ and that $H_k (\thick_{\Db \AA} (\BB[0])) \cong \wide_\AA(\BB)$.

\subsection{Perpendicular subcategories}\label{subsection:Perpendicular}

Let $\DD$ be a triangulated category, and let $\BB \subseteq \DD$ be a full subcategory.  We will write $\BB^{\perp}$ or ${}^\perp \BB$ for the full subcategory given by all $D \in \DD$ such that $\Hom(\BB,D[n]) = 0$ or $\Hom(D[n],\BB) = 0$ for all $n \in \bZ$, respectively.

Likewise, we write $\BB^{\perp_n}$ for the full subcategory given by all $D \in \DD$ such that $\Hom(B,D[n]) = 0$ for all $B \in \BB$.  Thus $\Ob \BB^{\perp_n} = \{D \in \DD \mid \Hom(\BB,D[n]) = 0\}$ and $\BB^\perp = \cap_{n \in \bZ} \BB^{\perp_n}$.

\begin{remark}
In some of our references, the notation $\BB^\perp$ has been used to describe a different subcategory.
\end{remark}

We will use the following proposition (see \cite[Lemma 3.1]{Bondal89}, also \cite[Lemma 3.1]{VandenBergh00}).

\begin{proposition}\label{proposition:Perpendicular}
Let $\DD$ be a triangulated category, and let $\BB, \CC$ be full triangulated subcategories of $\DD$ such that $\CC \subseteq \BB^\perp$.  The following are equivalent.
\begin{enumerate}
\item $\BB$ and $\CC$ generate $\DD$ as a triangulated category.
\item For each $D \in \DD$ there is a triangle $B \to D \to C \to B[1]$ with $B \in \BB$ and $C \in \CC$.
\item $\CC = \BB^{\perp}$ and the inclusion $i:\BB \to \DD$ has a right adjoint.
\item $\BB = {}^\perp \CC$ and the inclusion $j:\CC \to \DD$ has a left adjoint.
\end{enumerate}
If one of these conditions hold, then the triangles in 2. are unique up to unique isomorphism, and are of the form
$$D_\BB \to D \to D^\CC \to D_\BB[1]$$
where the maps $D_\BB \to D$ and $D \to D^\CC$ are the universal maps.
\end{proposition}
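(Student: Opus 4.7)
The plan is to establish the cycle $(1)\Rightarrow(2)\Rightarrow(3)\Rightarrow(2)$ together with the symmetric $(2)\Rightarrow(4)\Rightarrow(2)$; the implication $(2)\Rightarrow(1)$ is immediate, since the triangle in (2) exhibits $D$ as generated by $\BB$ and $\CC$. The semi-orthogonality hypothesis $\CC \subseteq \BB^\perp$, giving $\Hom(\BB,\CC[n])=0$ for all $n \in \bZ$, will be the engine that makes every step go through.

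For $(1)\Rightarrow(2)$, I would let $\DD'\subseteq\DD$ be the full subcategory of objects admitting a decomposition triangle $B\to D\to C\to B[1]$ with $B\in\BB$, $C\in\CC$, and show that $\DD'$ is a triangulated subcategory; since it contains both $\BB$ and $\CC$ (via the trivial triangles), condition (1) then forces $\DD'=\DD$. Closure under shifts is clear. For closure under cones, given $f:D\to D'$ with chosen decompositions, the vanishing $\Hom(\BB,\CC)=\Hom(\BB,\CC[1])=0$ implies that $f$ lifts uniquely to a morphism $B\to B'$ and descends uniquely to a morphism $C\to C'$; two applications of the octahedral axiom then produce a decomposition triangle for $\cone(f)$ out of the decompositions of $D,D'$ and these lifts.

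For $(2)\Rightarrow(3)$, I would apply $\Hom(B',-)$ for $B'\in\BB$ to the given triangle. Semi-orthogonality kills $\Hom(B',C[n])$ for every $n$, so the long exact sequence collapses to a natural isomorphism $\Hom(B',B)\xrightarrow{\sim}\Hom(B',D)$. This simultaneously represents $\Hom(-,D)\mid_{\BB}$ by $B$ (producing the right adjoint $i^{\ast}$ of $i$) and shows $\BB^\perp\subseteq\CC$: if $D\in\BB^\perp$ then $\id_B$ goes to zero, so $B=0$ and $D\cong C\in\CC$. For $(3)\Rightarrow(2)$, set $B=i^{\ast}D$ and complete the counit $\varepsilon_D:B\to D$ to a triangle $B\to D\to C\to B[1]$; the adjunction gives $\Hom(B',B)\cong\Hom(B',D)$ and $\Hom(B',B[1])\cong\Hom(B',D[1])$ for all $B'\in\BB$, so the long exact sequence forces $\Hom(B',C)=0$, and repeating with shifts shows $C\in\BB^\perp=\CC$. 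The equivalences with (4) are strictly dual.

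For the uniqueness assertion, given two decompositions $B_i\to D\to C_i\to B_i[1]$, I would lift $\id_D$ to a morphism of triangles: the obstruction sits in $\Hom(\BB,\CC[1])=0$ and uniqueness of the lift sits in $\Hom(\BB,\CC)=0$, so the lift exists and is unique, and by the five lemma it is an isomorphism. The resulting maps $D_\BB\to D$ and $D\to D^\CC$ are then, by construction, the counit and unit of the adjunctions from (3) and (4), hence universal. The main obstacle is the octahedral bookkeeping in $(1)\Rightarrow(2)$; once that is in place, everything else reduces to routine manipulation of long exact sequences and the naturality of adjunctions.
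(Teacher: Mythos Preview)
The paper does not supply its own proof of this proposition; it simply cites \cite{Bondal89} and \cite{VandenBergh00}. Your argument is the standard one found in those references and is correct: the cycle $(2)\Rightarrow(3)\Rightarrow(2)$ (and its dual with $(4)$) via long exact sequences and counit triangles, together with the $3\times 3$-type octahedral construction for $(1)\Rightarrow(2)$, is exactly how this is usually established. One small nitpick on indices: in the uniqueness paragraph, the existence obstruction for lifting $\id_D$ actually lives in $\Hom(\BB,\CC)$ (the composite $B_1\to D\to C_2$ must vanish) and the uniqueness obstruction lives in $\Hom(\BB,\CC[-1])$ (the difference of two lifts factors through $C_2[-1]$), rather than in $\Hom(\BB,\CC[1])$ and $\Hom(\BB,\CC)$ as you wrote; since every $\Hom(\BB,\CC[n])$ vanishes by hypothesis, the slip is harmless.
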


We have similar definitions for an abelian category $\AA$.  Let $\BB \subseteq \AA$ be a full subcategory.  We will write $\BB^{\perp}$ or ${}^\perp \BB$ for the full subcategory given by all $A \in \AA$ such that $\Ext^n (B,A) = 0$ or $\Ext^n (A,B) = 0$ for all $B \in \BB,n \in \bN$, respectively.

If $\AA$ is a hereditary category, then both $\BB^\perp$ and ${}^\perp \BB$ are closed under extensions, kernels and cokernels and are hence also hereditary categories (\cite[Proposition 1.1]{GeigleLenzing91}).

The following proposition is well-known, and gives a connection between the perpendicular categories in the hereditary setting and in the derived (triangulated) setting.

\begin{proposition}
Let $\BB$ be an abelian subcategory of a hereditary abelian category $\AA$, and let $\Db \BB \to \Db \AA$ be the derived functor of this embedding.  We have $\Db (\BB^\perp) = (\Db \BB)^\perp$ and $\Db ({}^\perp \BB) = {}^\perp (\Db \BB)$
\end{proposition}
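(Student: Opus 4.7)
The plan is to unpack both equalities using the hereditary decomposition of objects in $\Db\AA$. I focus on $\Db(\BB^\perp) = (\Db\BB)^\perp$; the identity ${}^\perp(\Db\BB) = \Db({}^\perp\BB)$ will follow by a dual argument.

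Since $\AA$ is hereditary, every $X \in \Db\AA$ splits as $X \cong \coprod_{k \in \bZ} H_k(X)[k]$, and therefore for any $A \in \AA$ and any $n \in \bZ$ I get
$$\Hom_{\Db\AA}(A[0], X[n]) \;\cong\; \prod_{k \in \bZ} \Ext^{n+k}_\AA(A, H_k X),$$
where terms with $n+k<0$ vanish automatically. Next, I note that the class of $B \in \Db\AA$ satisfying $\Hom_{\Db\AA}(B, X[n]) = 0$ for all $n \in \bZ$ is a thick triangulated subcategory of $\Db\AA$; if it contains $\BB[0]$, then it contains $\thick(\BB[0]) = \Db\BB$. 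Hence $X \in (\Db\BB)^\perp$ iff $\Hom_{\Db\AA}(B[0], X[n]) = 0$ for every $B \in \BB$ and every $n \in \bZ$.

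With these reductions in place, both inclusions become formal. For $\Db(\BB^\perp) \subseteq (\Db\BB)^\perp$: if $H_k X \in \BB^\perp$ for every $k$, each factor $\Ext^{n+k}_\AA(B, H_k X)$ in the display vanishes (the hereditary assumption takes care of the case $n+k \geq 2$), so the Hom group vanishes for all $B \in \BB$ and $n \in \bZ$. Conversely, if $X \in (\Db\BB)^\perp$, then the product on the right vanishes for all $n$ and $B$, hence each factor vanishes; choosing $n+k \in \{0,1\}$ for a given $k$ shows $\Hom_\AA(B, H_k X) = \Ext^1_\AA(B, H_k X) = 0$, and higher $\Ext$'s are zero by heredity. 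So every $H_k X$ lies in $\BB^\perp$, i.e.\ $X \in \Db(\BB^\perp)$. The second identity follows by the same calculation applied to the decomposition $\Hom_{\Db\AA}(X[n], B[0]) \cong \prod_k \Ext^{-(n+k)}_\AA(H_k X, B)$.

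There is no genuine obstacle here; the entire content is the observation that the hereditary splitting turns a single Hom in $\Db\AA$ into a product of $\Ext$-groups between homologies, after which perpendicularity transfers factor by factor. The only minor point worth making explicit is the reduction from testing perpendicularity against all of $\Db\BB$ to testing it against the stalks $B[0]$ with $B \in \BB$, which is where I use that $\Db\BB$ is the thick closure of $\BB[0]$ inside $\Db\AA$.
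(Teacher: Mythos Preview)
Your proof is correct and follows essentially the same approach as the paper: both use the hereditary splitting of objects in $\Db\AA$ into shifts of their homologies to reduce perpendicularity in the derived category to vanishing of $\Ext$-groups in $\AA$. The only cosmetic difference is that the paper decomposes both $B$ and $X$ into stalks directly, whereas you decompose only $X$ and handle the first variable via the thick-closure observation that ${}^\perp X$ is a thick subcategory containing $\BB[0]$ and hence $\Db\BB$; both routes are equally short.
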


\begin{proof}
An object $X \in \Db \AA$ will lie in $\Db \BB$ if and only if $H_k X \in \BB$ for all $k \in \bZ$.  Thus
\begin{eqnarray*}
X \in (\Db \BB)^\perp &\Longleftrightarrow& \forall B \in \Db \BB: \Hom_{\Db \AA}(B,X) =0 \\
&\Longleftrightarrow& \forall k \in \bZ, \forall n \in \bN, \forall B \in \Db \BB: \Hom_{\Db \AA}(H_n B[n],H_k X[k]) = 0 \\
&\Longleftrightarrow& \forall k \in \bZ, \forall n \in \bZ, \forall B' \in \BB: \Hom_{\Db \AA}(B'[0],(H_k X)[n]) = 0 \\
&\Longleftrightarrow& \forall k \in \bZ, \forall n \in \bZ, \forall B' \in \BB: \Ext^n_{\AA}(B',H_k X) = 0 \\
&\Longleftrightarrow& \forall k \in \bZ: H_k X \in \Db (\BB^\perp) \\ 
&\Longleftrightarrow& X \in \Db (\BB^\perp).
\end{eqnarray*}
The other statement is proved similarly.
\end{proof}

\subsection{\texorpdfstring{$t$-Structures}{t-Structures}}\label{subsection:tStructures}

Let $\DD$ be any triangulated category.  A $t$-structure (\cite{Beilinson82}) on $\DD$ consists of a pair $(D^{\geq 0}, D^{\leq 0})$ of full subcategories of $\CC$ satisfying the following conditions, where we denote $D^{\leq n} = D^{\leq 0} [-n]$ and $D^{\geq n} = D^{\geq 0} [-n]$
\begin{enumerate}
\item $D^{\leq 0} \subseteq D^{\leq 1}$ and $D^{\geq 1} \subseteq D^{\geq 0}$,
\item $\Hom(D^{\leq 0}, D^{\geq 1}) = 0$,
\item \label{enumerate:Triangles} $\forall Y \in \CC$, there exists a triangle $X \to Y \to Z \to X[1]$ with $X \in D^{\leq 0}$ and $Z \in D^{\geq 1}$.
\end{enumerate}
Furthermore, we will say the $t$-structure is \emph{bounded} if
$$\bigcup_n D^{\leq n} = \bigcup_n D^{\geq n} = \DD,$$
and we will say the $t$-structure is \emph{nondegenerate} if
$$\bigcap_n D^{\leq n} = \bigcap_n D^{\geq n} = \{0\}.$$

A \emph{preaisle} is a full subcategory $\UU \subseteq \TT$ which is closed under suspensions and extensions.  If the embedding $\UU \to \TT$ has a right adjoint, then $\UU$ is called an \emph{aisle}.  Dually, a reflective subcategory $\UU' \subseteq \TT$ which is closed under desuspensions and extensions is called a \emph{coaisle}.

Aisles were introduced in \cite{KellerVossieck88}.  The connection with $t$-structures is given by the assignment $\UU \to (\UU, \UU[1]^{\perp_0})$ which gives a bijection between the aisles and the $t$-structures on $\DD$, where we recall that $\Ob \UU[1]^{\perp_0}$ is the full subcategory of $\DD$ given by
$$\Ob \UU[1]^{\perp_0} = \{D \in \DD \mid \Hom(\UU[1], D) = 0\}$$
as defined in Section \ref{subsection:Perpendicular}.  In particular, a coaisle is uniquely determined by the corresponding aisle, and vice versa.

We will say a subcategory $\UU$ of $\DD$ is \emph{contravariantly finite} if for every $D \in \DD$ there is a $U \in \UU$ and a map $U \to D$ such that $\Hom_\DD(-,U)\mid_\UU \to \Hom_\DD(-,D)\mid_\UU$ is an epimorphism.  Note that a full coreflective subcategory is always contravariantly finite; we can choose $U = D_\UU$.

The following proposition shows that contravariantly finiteness is what distinguishes aisles and preaisles in our setting.

\begin{proposition}\label{proposition:Neeman}\cite[Proposition 1.4]{Neeman10}
Let $\UU$ be a preaisle in a triangulated category $\DD$ where all idempotents split.  If $\UU$ is closed under retracts and $\UU$ is contravariantly finite in $\DD$, then $\UU$ is an aisle.
\end{proposition}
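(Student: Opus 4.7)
The plan is to show that for each object $D \in \DD$, the functor $\Hom_\DD(-,D)|_\UU$ is representable by some $U \in \UU$; by the adjoint functor proposition from Section 2.1, this representability for every $D$ is equivalent to the embedding $\UU \hookrightarrow \DD$ having a right adjoint, which is what it means for $\UU$ to be an aisle. So the whole task is to construct, given $D$, a pair $(U,f)$ with $U \in \UU$ and $f:U \to D$ such that $f_*:\Hom_\DD(-,U)|_\UU \to \Hom_\DD(-,D)|_\UU$ is a natural isomorphism.

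First I would use contravariant finiteness to produce an initial approximation $f_0:U_0 \to D$ with $U_0 \in \UU$ and $f_{0*}$ an epimorphism of functors on $\UU$. Completing $f_0$ to a triangle $U_0 \to D \to C_0 \to U_0[1]$ and examining the long exact sequence one obtains from applying $\Hom_\DD(W,-)$ with $W \in \UU$, the kernel of $f_{0*}$ is controlled by maps $W \to C_0[-1]$, so I would next apply contravariant finiteness to $C_0[-1]$, getting $W_0 \in \UU$ and $g_0: W_0 \to C_0[-1]$ surjective on Hom-functors out of $\UU$. Composing with $C_0[-1]\to U_0$ yields $h_0:W_0 \to U_0$ whose composite with $f_0$ vanishes, so that $f_0$ factors through the cone $U_1 := \cone(h_0)$, which lies in $\UU$ because $\UU$ is closed under extensions and suspensions. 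The new map $f_1:U_1 \to D$ is still surjective on Hom-functors out of $\UU$, but has had an entire family of obstruction classes killed.

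Iterating this procedure gives a sequence $U_0 \to U_1 \to U_2 \to \cdots$ in $\UU$ with compatible maps $f_n:U_n \to D$. The heart of the argument, and the main obstacle, is the convergence step: in a general triangulated category there is no countable homotopy colimit to appeal to, so the tower does not literally admit a limit. This is precisely where the two hypotheses of Proposition \ref{proposition:Neeman} are indispensable. Since $\DD$ has split idempotents, one shows that the composite of sufficiently many transition maps in the tower factors as an idempotent on some $U_n$ whose image represents the eventual stabilization; and since $\UU$ is closed under retracts, the image of this idempotent lies in $\UU$. The resulting retract $U \in \UU$ of some $U_n$, together with the induced map $U \to D$, is verified to represent $\Hom_\DD(-,D)|_\UU$, completing the proof. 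Making this idempotent-and-retract extraction precise is the technical crux that replaces the naive limit argument, and it is exactly the step for which Neeman's hypotheses are tailored.
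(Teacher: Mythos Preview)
The paper does not prove this proposition itself; it cites \cite{Neeman10} and remarks that closure under desuspension is not used there. So the relevant comparison is with Neeman's argument.

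Your setup matches Neeman's: approximate $D$ by $f_0\colon U_0 \to D$, take the cone $C_0$, approximate again (you approximate $C_0[-1]$), and form $U_1 \in \UU$ as a cone with a compatible map $f_1\colon U_1 \to D$. Where you diverge is in iterating this to an infinite tower $U_0 \to U_1 \to U_2 \to \cdots$ and then asserting that one can extract a representing object because ``sufficiently many transition maps factor as an idempotent.'' You give no mechanism for this, and in a bare triangulated category with no colimits there is none: the tower has no reason to stabilise, and nothing forces composites of transition maps to become idempotent. This convergence step, which you yourself flag as ``the technical crux,'' is a genuine gap.

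The missing observation is that the idempotent is already available after a \emph{single} step. Since $U_1 \in \UU$ and $f_0$ is a right $\UU$-approximation, $f_1$ factors as $f_0 \circ r$ for some $r\colon U_1 \to U_0$; together with the canonical $i\colon U_0 \to U_1$ one has $f_0 \circ r \circ i = f_0$, so $r i - 1_{U_0}$ factors through $\partial\colon C_0[-1] \to U_0$, and then (since $U_0 \in \UU$ and $g_0$ is a right approximation) through $h_0 = \partial \circ g_0$. Composing with $i$ and using $i \circ h_0 = 0$ from the triangle $W_0 \to U_0 \to U_1$ gives $i \circ r \circ i = i$, whence $i \circ r$ is idempotent on $U_1$. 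Splitting this idempotent and analysing the resulting direct-sum decompositions of $U_0$, $U_1$ and $W_0$ is what finishes the proof. So your outline has the right ingredients but misidentifies where the idempotent comes from; the infinite process you describe is both unnecessary and, as stated, unjustified.
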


\begin{remark}
Although the statement of \cite[Proposition 1.4]{Neeman10} requires $\UU$ to be a thick subcategory (thus a preaisle closed under retracts and desuspension), the proof does not use being closed under desuspension.
\end{remark}

The previous proposition allows us to generalize \cite[Proposition 1.4]{KellerVossieck88} (see also \cite[1.4]{Beilinson82}) in Proposition \ref{proposition:Chen} below.  We first recall a definition.

\begin{definition}
Let $\TT$ be a triangulated category and let $\UU$ and $\VV$ be two subcategories of $\TT$.  We will denote by $\UU \ast \VV$ the full subcategory of $\TT$ consisting of the objects $X \in \TT$ occurring in a triangle $U \to X \to V \to U[1]$ with $U \in \UU$ and $V \in \VV$.
\end{definition}

It follows from \cite[1.3.10]{Beilinson82} that the operation $\ast$ is associative.  We will use the following lemma.

\begin{lemma}\label{lemma:PreaislesStar}
Let $\UU$ and $\VV$ be preaisles in any triangulated category $\TT$.  If $\UU \subseteq {}^\perp \VV$, then $\UU \ast \VV$ is a preaisle.
\end{lemma}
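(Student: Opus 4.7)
The plan is to verify the two defining properties of a preaisle for $\UU \ast \VV$: closure under suspensions and under extensions. Closure under suspensions is immediate, since suspending a decomposition $U \to X \to V \to U[1]$ with $U \in \UU$ and $V \in \VV$ produces $U[1] \to X[1] \to V[1] \to U[2]$, and $U[1] \in \UU$, $V[1] \in \VV$ because $\UU$ and $\VV$ are themselves preaisles.

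For closure under extensions, I would take a triangle $X \to Y \to Z \to X[1]$ with $X, Z \in \UU \ast \VV$ and fix decompositions $U_1 \to X \to V_1 \to U_1[1]$ and $U_2 \to Z \to V_2 \to U_2[1]$. The single use of the orthogonality $\UU \subseteq {}^{\perp}\VV$ is the vanishing $\Hom_{\TT}(U_2, V_1[1]) = 0$: this forces the composition $U_2 \to Z \to X[1] \to V_1[1]$ to be zero, so the morphism $U_2 \to X[1]$ obtained as $U_2 \to Z \to X[1]$ factors through $U_1[1] \to X[1]$ via some lift $h \colon U_2 \to U_1[1]$.

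I would then apply the octahedral axiom to the composition $Y \to Z \to V_2$, obtaining an object $Y''$ together with triangles $X \to Y'' \to U_2 \to X[1]$ (whose connecting morphism is precisely the composition $U_2 \to Z \to X[1]$) and $Y'' \to Y \to V_2 \to Y''[1]$. A second octahedral application, now to the composition $U_2[-1] \xrightarrow{h[-1]} U_1 \to X$ (which by construction of $h$ agrees with the connecting morphism $U_2[-1] \to X$ of the first triangle), yields a triangle $U_1 \to U \to U_2 \to U_1[1]$, so that $U \in \UU$ by closure of $\UU$ under extensions, together with a triangle $U \to Y'' \to V_1 \to U[1]$ exhibiting $Y'' \in \UU \ast \VV$.

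A third octahedral application, this time to the composition $U \to Y'' \to Y$, produces a triangle $V_1 \to V \to V_2 \to V_1[1]$, so that $V \in \VV$ by closure of $\VV$ under extensions, together with the triangle $U \to Y \to V \to U[1]$ that witnesses $Y \in \UU \ast \VV$. The main obstacle is the bookkeeping of the three octahedral diagrams, in particular identifying their connecting morphisms so that the lift $h$ can be deployed in the second application; once the identifications are set up, the orthogonality hypothesis is used in exactly one place.
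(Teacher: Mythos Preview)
Your argument is correct and follows the same strategy as the paper: use the orthogonality $\UU \subseteq {}^\perp \VV$ once to lift the connecting map through $U_1[1]$, then apply the octahedral axiom to produce the desired $\UU$--$\VV$ decomposition of the middle term. The only cosmetic difference is that the paper packages the octahedral manipulations into a single $3\times 3$ diagram (the standard ``nine-lemma'' for triangulated categories), whereas you spell out the three octahedral applications explicitly; these are equivalent presentations of the same proof.
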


\begin{proof}
It is clear that $\UU \ast \VV$ is closed under suspension.  To show it is closed under extensions, let $X,Y \in \UU \ast \VV$ and let $X \to M \to Y \to X[1]$ be a triangle.  We want to show that $M \in \UU \ast \VV$.

There are triangles $U_X \to X \to V_X \to U_X[1]$ and $U_Y \to Y \to V_Y \to U_Y[1]$ with $U_X,U_Y \in \UU$ and $V_X,V_Y \in \VV$.  The composition $U_Y \to Y \to X[1] \to V_X[1]$ is zero (since $\UU \subseteq {}^\perp \VV$) so that the map $U_Y \to Y \to X[1]$ factors as $U_Y \to U_X[1] \to X[1]$.

Using the octahedral axiom, we may extend this commutative square to a commutative diagram
$$\xymatrix{
U_X \ar[r] \ar[d] & X \ar[r] \ar[d] & V_X \ar[r]\ar[d] & U_X[1] \ar[d] \\
U_M \ar[r] \ar[d] & M \ar[r] \ar[d] & V_M \ar[r] \ar[d] &U_M[A] \ar[d]\\
U_Y \ar[r] \ar[d] & Y \ar[r] \ar[d] & V_Y \ar[r]\ar[d] & U_Y[1] \ar[d] \\
U_X[1] \ar[r] & X[1] \ar[r] & V_X[1] \ar[r] & U_X[2]}$$
where the rows and columns are triangles.  Since $\UU$ and $\VV$ are preaisles, we have $U_M \in \UU$ and $V_M \in \VV$, and hence $M \in \UU \ast \VV$ as required.
\end{proof}

\begin{proposition}\label{proposition:Chen}
Let $\UU$ and $\VV$ be a aisles in $\Db \AA$.  If $\UU \ast \VV$ is a preailse, then it is an aisle.  In particular if $\UU \subseteq {}^\perp \VV$, then $\UU \ast \VV$ is an aisle.
\end{proposition}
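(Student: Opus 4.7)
The ``in particular'' clause reduces to the main assertion via Lemma \ref{lemma:PreaislesStar}, so it suffices to show that when $\UU\ast\VV$ is a preaisle it is automatically an aisle. The plan is to invoke Neeman's Proposition \ref{proposition:Neeman}: since idempotents split in $\Db\AA$, it will suffice to check that $\UU\ast\VV$ is contravariantly finite in $\Db\AA$ and closed under retracts.

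For contravariant finiteness, given $X\in\Db\AA$, I glue the two aisle adjunctions. Form the triangle $X_\UU\to X\to C\to X_\UU[1]$ from the $\UU$-adjunction, then $C_\VV\to C\to D\to C_\VV[1]$ from the $\VV$-adjunction applied to $C$, and finally $Y\to X\to D\to Y[1]$ where $X\to D$ is the composition. An octahedral diagram on these two composable triangles produces a triangle $X_\UU\to Y\to C_\VV\to X_\UU[1]$, so $Y\in\UU\ast\VV$. To see that $\Hom(W,Y)\to\Hom(W,X)$ is surjective for every $W\in\UU\ast\VV$, I show that every composite $W\to X\to D$ vanishes: picking a witness triangle $U_W\to W\to V_W\to U_W[1]$, the $\UU$-adjunction isomorphism $\Hom(U_W,X_\UU)\cong\Hom(U_W,X)$ forces $U_W\to X\to C$ to be zero, so $W\to C$ factors through $V_W\to C$; the $\VV$-adjunction then factors this map as $V_W\to C_\VV\to C$, and $C_\VV\to C\to D=0$ by construction of $D$.

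Closure under retracts is the delicate step and I expect it to be the main obstacle. Both $\UU$ and $\VV$ are closed under retracts by Lemma \ref{lemma:AdjointRetract}, and the two adjoints are additive, so any decomposition $Z=Z'\oplus Z''$ of an object $Z\in\UU\ast\VV$ splits the data $Z_\UU$, $C$, $C_\VV$, $D$ compatibly. The octahedral comparison of a witness triangle $U_Z\to Z\to V_Z$ for $Z$ with the adjunction triangle $Z_\UU\to Z\to C$ places $C$ in $\VV\ast\UU$ via a triangle $V_Z\to C\to U'[1]\to V_Z[1]$, where $U'=\cone(U_Z\to Z_\UU)\in\UU$. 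Under the ``in particular'' hypothesis $\UU\subseteq{}^\perp\VV$ the connecting map $U'[1]\to V_Z[1]$ vanishes, forcing $C\in\VV$, and retract-closure of $\VV$ transports the splitting to $Z'$ and $Z''$, yielding $Z',Z''\in\UU\ast\VV$. In the general preaisle setting one only obtains $C\in\VV\ast\UU$, so the crux of the argument is to propagate the retract splitting through this weaker witness; my preferred approach is to use the approximation $Y_{Z'}\to Z'$ built in the previous paragraph to exhibit $Z'$ as a retract of $Y_{Z'}\in\UU\ast\VV$, and then, invoking the additivity of both adjoints on the split decomposition $Z=Z'\oplus Z''$, to manufacture a genuine triangle of the form $U\to Z'\to V\to U[1]$ with $U\in\UU$ and $V\in\VV$. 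Once both hypotheses of Neeman's proposition are in hand, the inclusion $\UU\ast\VV\hookrightarrow\Db\AA$ acquires a right adjoint, and $\UU\ast\VV$ is an aisle.
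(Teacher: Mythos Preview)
Your contravariant finiteness argument is correct and is essentially a direct proof of the special case of Chen's theorem that the paper simply cites; no objection there.

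The gap is in the retract-closure step for the general preaisle case. You correctly flag this as ``the main obstacle'' but then do not close it. Your suggested route---exhibit $Z'$ as a retract of the approximation $Y_{Z'}$ and then ``manufacture'' a witness triangle from additivity---is circular: knowing that $Z'$ is a retract of $Y_{Z'}\in\UU\ast\VV$ is no stronger than knowing $Z'$ is a retract of $Z\in\UU\ast\VV$, which is the hypothesis. And the octahedral comparison you perform only places the cone $C$ of $Z_\UU\to Z$ in $\VV\ast\UU$, which is not enough to split the triangle along the decomposition $Z=Z'\oplus Z''$.

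What you are missing is the reduction $\UU\ast\VV=\UU\ast(\VV\cap\UU^{\perp_0})$, and this is exactly where the preaisle hypothesis earns its keep. Since $\UU\ast\VV$ is a preaisle and contains both $X$ and $X_\UU$, the cone $C$ of $X_\UU\to X$ also lies in $\UU\ast\VV$; pick a witness triangle $U\to C\to V\to U[1]$. But $C\in\UU^{\perp_0}$ because it is the cofibre of the $\UU$-coreflection, so $U\to C$ is zero and $C$ is a direct summand of $V\in\VV$, hence $C\in\VV\cap\UU^{\perp_0}$. With this in hand, the retract argument is essentially your ``special case'': writing $\VV'=\VV\cap\UU^{\perp_0}$, one has $\Hom(\UU,\VV')=0$, so for a summand $Y$ of $X\in\UU\ast\VV'$ the map $Y_\UU\to X\to V'$ vanishes, the relevant diagram of triangles commutes, and the cone of $Y_\UU\to Y$ is exhibited as a retract of $V'\in\VV$. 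Note that only degree-zero $\Hom$-orthogonality is needed here, not the full $\UU\subseteq{}^\perp\VV$ you invoke in your special case.
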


\begin{proof}
Since $\UU$ and $\VV$ are aisles in $\Db \AA$, they are contravariantly finite.  It follows from \cite[Theorem 1.3]{Chen09} that $\UU \ast \VV$ is also contravariantly finite.  By assumption $\UU \ast \VV$ is a preailse.

If $\UU \ast \VV$ were closed under direct summands (= retracts) then, since all idempotents split in $\Db \AA$, we can apply Proposition \ref{proposition:Neeman} to see that $\UU \ast \VV$ is an aisle.  To check that it is indeed closed under direct summands, we will first show that $\UU \ast \VV = \UU \ast (\VV \cap \UU^{\perp_0})$.  It is easy to see that $\UU \ast \VV \supseteq \UU \ast (\VV \cap \UU^{\perp_0})$.  To check the other direction, let $X \in \UU \ast \VV.$

The cone $C$ on the universal map $X_\UU \to X$ lies in $\UU \ast \VV$ since we have assumed the latter is a preaisle, hence there is a triangle
$$U \to C \to V \to U[1]$$
where $U \in \UU$ and $V \in \VV$.  Since $(\UU, \UU[1]^{\perp_0})$ is a $t$-structure, we know that $C \in \UU^{\perp_0}$ so that $U \to C$ is the zero map, and hence $C$ is a direct summand of $V$.  We conclude that $C \in \VV \cap \UU^{\perp_0}$ and hence $X \in \UU \ast (\VV \cap \UU^{\perp_0})$.  This proves the other inequality.

We will now show that $\UU \ast \VV$ is closed under direct summands.  Let $\VV' = \VV \cap \UU^{\perp_0}$ and let $X \in \UU \ast \VV = \UU \ast \VV'$.  For a direct summand $Y$ of $X$ we have the solid arrow diagram
$$\xymatrix{Y_\UU \ar[r] \ar@{-->}[d] & Y \ar[r] \ar[d] & C \ar[r] & Y_\UU[1]\\
U \ar[r] \ar@{-->}[d] & X \ar[r] \ar[d] & V' \ar[r] & U[1] \\
Y_\UU \ar[r] & Y \ar[r] & C \ar[r] & Y_\UU[1]}$$
where $U \in \UU$ and $V' \in \VV'$, and where all the rows are triangles.  The maps $Y \to X$ and $X \to Y$ are a split monomorphism and a split epimorphism, respectively, such that $Y \to X \to Y$ is the identity.  The map $Y_\UU \to U$ is exists since $Y_\UU \to Y \to X \to V'$ is zero (since $Y_\UU \in \UU$), and the map $\UU \to Y_\UU$ is induced by the universal property of the map $Y_\UU \to Y$.  By construction, the dashed arrows make the diagram commute, so there exist maps $C \to V'$ and $V' \to C$ giving maps between the triangles in the last diagram.

Again using the universal property of $Y_\UU \to Y$ we see that $Y_\UU \to U \to Y_\UU$ is the identity, and thus the composition $C \to V' \to C$ is an isomorphism.  We see that $C$ is a direct summand of $V' \in \VV$ and hence $C \in \VV$.  We may conclude that $Y \in \UU \ast \VV$ which finishes the proof.
\end{proof}

\subsection{Torsion classes and nullity classes}\label{subsection:Torsion}

Let $\AA$ be any abelian category.  A \emph{torsion theory} on $\AA$ is a pair $(\TT, \FF)$ of full subcategories of $\AA$ so that $\Hom_\AA(\TT,\FF) = 0$ and for every object $A \in \AA$ there is a short exact sequence
$$0 \to T \to A \to F \to 0$$
where $T \in \TT$ and $F \in \FF$.  This short exact sequence is necessarily unique up to isomorphism.  The objects in $\TT$ are called \emph{torsion} objects and the objects in $\FF$ are called \emph{torsionfree} objects.

The subcategory $\TT$ is called a torsion subcategory of $\AA$.  Any full subcategory of $\AA$ satisfying the following properties is a torsion subcategory:
\begin{enumerate}
\item $\TT$ is closed under quotient objects,
\item $\TT$ is closed under extensions, and
\item the embedding $\TT \to \AA$ has a right adjoint.
\end{enumerate}
The associated torsionfree subcategory is then given by
$$\FF = \TT^{\perp_0} = \{A \in \AA \mid \Hom(\TT,A) = 0\}.$$
A torsion subcategory $\TT$ of $\AA$ yields a corresponding aisle $\UU$ in $\Db \AA$ given by (\cite[Proposition 2.1]{Happel96})
$$\UU = \{X \in \Db \AA \mid \mbox{$H_n X =0$ when $n < 0$, and $H_0 X \in \TT$}\}.$$

Following \cite{StanleyWang11} we will say that a full subcategory $\TT$ of $\AA$ is a \emph{nullity class} if it is closed under quotient objects and under extensions, but without the condition that the embedding $\TT \to \AA$ has a right adjoint (thus a torsion class is a coreflective nullity class).  A nullity class gives a preaisle in the same way as a torsion class gives an aisle.

A nullity class $\TT$ of $\AA$ will be called \emph{tilting} if for every $A \in \AA$ there is a monomorphism $A \to T$ for some $T \in \TT$.

\section{Narrow subcategories and their wide closure}

In this section, let $\AA$ be a hereditary category.  In this section, we give the definition of a narrow subcategory of $\AA$ (Definition \ref{definition:Narrow}) as in \cite{StanleyWang11}.  Our main result is Corollary \ref{corollary:NarrowIsPretorsion} where it is shown that every narrow subcategory $\NN$ in $\AA$ is a nullity class in its wide closure $\wide \NN$.

\begin{definition}\label{definition:Narrow}
A full subcategory $\NN \subset \mathcal A$ is called a \emph{narrow subcategory} if $\NN$ is closed under extensions and cokernels.
\end{definition}

\begin{proposition}\label{proposition:Images}
A narrow subcategory $\NN$ of a hereditary category $\AA$ is closed under images.
\end{proposition}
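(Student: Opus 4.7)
Let $f \colon A \to B$ be a morphism in $\NN$ and set $I = \operatorname{im} f$. Writing $K = \ker f$ and $C = \operatorname{coker} f$, cokernel-closure of $\NN$ gives $C \in \NN$, and we have the two short exact sequences
\[
0 \to K \to A \to I \to 0, \qquad 0 \to I \to B \to C \to 0.
\]

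The plan is to realise $I$ as the cokernel of a morphism between two objects of $\NN$; the cokernel-closure of $\NN$ will then give $I \in \NN$. The most naive idea, reading $I = \operatorname{coker}(K \hookrightarrow A)$, would suffice if $K$ were known to lie in $\NN$, but $K$ is only a subobject of $A$ and this does not follow from narrowness alone. One must therefore find a different cokernel presentation of $I$.

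Since extension-closure of $\NN$ forces closure under finite direct sums, both $A \oplus B$ and $B \oplus C$ lie in $\NN$. The plan is to pair $f$ with the canonical projection $\pi \colon B \twoheadrightarrow C$ into an auxiliary morphism such as
\[
g \colon A \oplus B \longrightarrow B \oplus C, \qquad (a, b) \mapsto \bigl(f(a), \pi(b)\bigr),
\]
a direct computation of which yields $\operatorname{im} g = I \oplus C$ and $\operatorname{coker} g = C$. The plan then reduces to showing $I \oplus C \in \NN$: once this is known, the split inclusion $C \hookrightarrow I \oplus C$ is a morphism in $\NN$ whose cokernel is $I$, and cokernel-closure finishes.

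The hereditary hypothesis enters at this last reduction through the vanishing $\Ext^{2}_{\AA}(C, K) = 0$, which forces the Yoneda product of the classes of the two short exact sequences above to be trivial. This allows one to refactor the $4$-term exact sequence $0 \to K \to A \to B \to C \to 0$ into a form from which extension- and cokernel-closure of $\NN$ suffice to place $I \oplus C$ (and hence $I$) in $\NN$.

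The main obstacle is precisely this last step. Direct constructions tend to produce $I$ only up to an unwanted direct summand (typically $A$, $B$, or $C$), and $\NN$ is not assumed closed under direct summands. The role of the hereditary hypothesis is to eliminate the summand via the $\Ext^{2}$-vanishing; pinning down the correct auxiliary morphism (and verifying that the refactoring stays inside $\NN$) is the heart of the argument.
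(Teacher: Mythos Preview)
Your proposal stops short of a proof: you correctly flag that heredity enters via $\Ext^2_\AA(C,K)=0$, but then concede that ``pinning down the correct auxiliary morphism\dots is the heart of the argument'' without carrying it out. Moreover, the intermediate target you set is not a genuine reduction. Since $C\in\NN$ and narrow subcategories are closed under direct sums and under direct summands (for $X\in\NN$ with an idempotent $e\in\End(X)$, both $\coker e$ and $\coker(1-e)$ lie in $\NN$), the statement $I\oplus C\in\NN$ is \emph{equivalent} to $I\in\NN$. Your auxiliary map $g$ thus only restates the problem: its image $I\oplus C$ is again the image of a morphism between objects of $\NN$, which is precisely what you are trying to control.

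The paper's argument supplies the missing construction. From $\Ext^2_\AA(C,K)=0$ one obtains a surjection $\Ext^1_\AA(C,A)\twoheadrightarrow\Ext^1_\AA(C,I)$, so the class of $0\to I\to B\to C\to 0$ lifts to an extension $0\to A\to J\to C\to 0$; extension-closure immediately gives $J\in\NN$. The left square of the resulting morphism of extensions is a pushout along $A\twoheadrightarrow I$, which exhibits $B$ as $(I\oplus J)/A$ and hence yields a short exact sequence
\[
0\longrightarrow A\longrightarrow I\oplus J\longrightarrow B\longrightarrow 0.
\]
Now $I\oplus J\in\NN$ by extension-closure (it sits between $A$ and $B$), and $I\in\NN$ as the cokernel of the inclusion $J\hookrightarrow I\oplus J$, a map between objects already known to lie in $\NN$. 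This is exactly the ``refactoring'' you gesture at; once you have it, the detour through $g$ and $I\oplus C$ is unnecessary.
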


\begin{proof}
Let $f: A \to B$ be a nonzero map in $\NN$.  Since $\AA$ is hereditary, the exact sequence $0 \to \ker f \to A \to B \to \coker f \to 0$ splits, thus there is an object $J \in \Ob \AA$ and a short exact sequence $0 \to A \to \im f \oplus J \to B \to 0$.  We know that $\CC$ is closed under direct summands (it is closed under quotient objects) and under extensions, hence $\im f \in \Ob \NN$.
\end{proof}

We wish to show that $\NN$ is a tilting nullity class in $\WW$.  We start with the following lemma.

\begin{lemma}\label{lemma:CokernelInNN}
Let $\NN$ be a narrow subcategory in an abelian hereditary category $\AA$.  Let $X \to Y$ be a map between objects in $\NN$, and denote the kernel by $K$.  If $X' \in \NN$, then the cokernel $C$ of any map $K \to X'$ lies in $\NN$.
\end{lemma}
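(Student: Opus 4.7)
The plan is to build the proof around a single pushout diagram. Let $f:X\to Y$ be our given map with kernel $K$, and let $g:K\to X'$ be the map whose cokernel $C$ we wish to show lies in $\NN$. Form the pushout
$$\xymatrix{
0 \ar[r] & K \ar[r] \ar[d]_{g} & X \ar[r] \ar[d] & \im f \ar[r] \ar@{=}[d] & 0 \\
0 \ar[r] & X' \ar[r] & P \ar[r] & \im f \ar[r] & 0
}$$
The top row is exact because $K = \ker f$ and $\AA$ is abelian. Since $K\to X$ is a monomorphism, its pushout $X'\to P$ is also a monomorphism, so the bottom row is short exact with cokernel $\coker(K\to X)=\im f$.

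Now exploit the two hypotheses on $\NN$. By Proposition \ref{proposition:Images}, $\im f\in\NN$ since $f$ is a map between objects of $\NN$. Combined with $X'\in\NN$ and closure of $\NN$ under extensions applied to the bottom row, I get $P\in\NN$. Next, by the pushout property the map $X\to P$ has cokernel $\coker(g) = C$. Factor $X\to P$ through its image as $X\twoheadrightarrow I\hookrightarrow P$; since both $X$ and $P$ lie in $\NN$, Proposition \ref{proposition:Images} again yields $I\in\NN$. Finally, $C$ is the cokernel of the inclusion $I\hookrightarrow P$, a map in $\NN$, and closure of $\NN$ under cokernels forces $C\in\NN$.

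I do not expect any serious obstacle; the delicate point is just verifying that both applications of Proposition \ref{proposition:Images} have their hypotheses met, which depends on first establishing $P\in\NN$ before looking at $\im(X\to P)$. The order of the three closure invocations (image, extension, cokernel) is therefore important, but nothing beyond the hereditariness of $\AA$ and the definition of narrow subcategory is required.
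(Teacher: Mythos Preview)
Your proof is correct and follows essentially the same approach as the paper: both form the pushout of $K\to X$ along $K\to X'$, observe that $P$ is an extension of two objects of $\NN$, and then read off $C$ as the cokernel of $X\to P$. The paper first replaces $Y$ by $\im f$ (reducing to the case $X\to Y$ epi), while you keep $\im f$ in the diagram explicitly; these are the same move. One minor remark: your final factoring of $X\to P$ through its image is unnecessary, since closure of $\NN$ under cokernels applies to arbitrary maps, not just monomorphisms, so once $X,P\in\NN$ you may conclude $C=\coker(X\to P)\in\NN$ directly.
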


\begin{proof}
First, we recall from Proposition \ref{proposition:Images} that $\NN$ is closed under images of maps.  So without loss of generality, we may thus assume that the map $X \to Y$ is an epimorphism.

Consider the following commutative diagram with exact rows and columns, where the upper left square is a pushout.
$$\xymatrix{0 \ar[r] & K \ar[r] \ar[d] & X \ar[r] \ar[d]& Y \ar[r] \ar@{=}[d]& 0 \\
0 \ar[r] & X' \ar[r] \ar[d]& P \ar[r] \ar[d]& Y \ar[r] & 0 \\
& Y' \ar[d]\ar@{=}[r] & Y'\ar[d] \\
& 0 & 0}$$
Since $\NN$ is closed under extensions, the second row implies that $P \in \NN$.  The second column then shows that $Y' \in \NN$, as required.
\end{proof}

\begin{proposition}\label{proposition:GeneratedInOneStep}
Let $\AA$ be a hereditary abelian category and let $\NN \subseteq \AA$ be a narrow subcategory.  Every object in $\WW = \wide \NN$ is the kernel of a map in $\NN$.
\end{proposition}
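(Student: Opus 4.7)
The plan is to introduce the class $\KK \subseteq \AA$ of all objects of the form $\ker(f)$ for some morphism $f$ between objects of $\NN$, and to prove that $\KK = \WW$. The inclusion $\KK \subseteq \WW$ will be immediate since $\WW$ is wide and contains $\NN$. For the reverse inclusion I will show that $\KK$ is itself a wide subcategory containing $\NN$; the containment $\NN \subseteq \KK$ holds via $N = \ker(N \to 0)$, so all the work will be in verifying closure of $\KK$ under kernels, cokernels, and extensions of morphisms between its objects. Throughout I will write $K_i = \ker(f_i : X_i \to Y_i)$ with $f_i$ a morphism in $\NN$.

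Given $\phi : K_1 \to K_2$ in $\KK$, for closure under kernels I will form the pushout $P$ of $K_1 \hookrightarrow X_1$ along the composite $K_1 \xrightarrow{\phi} K_2 \hookrightarrow X_2$. Since pushout of a mono is a mono, $P$ will sit in a short exact sequence $0 \to X_2 \to P \to \im f_1 \to 0$ with both ends in $\NN$ (using Proposition \ref{proposition:Images}), so $P \in \NN$, and a short direct computation gives $\ker(X_1 \to P) = \ker(K_1 \to X_2) = \ker \phi$. For cokernels, Lemma \ref{lemma:CokernelInNN} applied to the same composite $K_1 \to X_2$ yields $C := X_2/\phi(K_1) \in \NN$; the inclusions $\phi(K_1) \subseteq K_2 \subseteq X_2$ then give a short exact sequence $0 \to \coker \phi \to C \to \im f_2 \to 0$, exhibiting $\coker \phi$ as a kernel of a morphism in $\NN$.

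The main obstacle will be closure under extensions, and this is where the heredity of $\AA$ is used decisively. Given an extension $0 \to K_1 \to E \to K_2 \to 0$, I will apply $\Hom(-, K_1)$ to $0 \to K_2 \to X_2 \to \im f_2 \to 0$; heredity kills the obstruction in $\Ext^2_\AA$, producing a surjection $\Ext^1(X_2, K_1) \twoheadrightarrow \Ext^1(K_2, K_1)$. I will lift the class of $E$ to an extension $0 \to K_1 \to \tilde E \to X_2 \to 0$, which via the corresponding pullback square along $K_2 \hookrightarrow X_2$ produces a monomorphism $E \hookrightarrow \tilde E$ with cokernel $\im f_2 \in \NN$. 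A further pushout of $K_1 \hookrightarrow \tilde E$ along $K_1 \hookrightarrow X_1$ will yield $\hat E$ sitting in $0 \to X_1 \to \hat E \to X_2 \to 0$, so $\hat E \in \NN$, and containing $\tilde E$ with cokernel $\im f_1 \in \NN$. Composing the two monomorphisms, $E \hookrightarrow \hat E$ will have cokernel fitting in $0 \to \im f_2 \to \hat E/E \to \im f_1 \to 0$, hence in $\NN$, so that $E = \ker(\hat E \to \hat E/E)$ belongs to $\KK$ as required.
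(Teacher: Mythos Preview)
Your proof is correct and follows the same overall strategy as the paper: define $\KK$ to be the class of kernels of maps in $\NN$ and show that $\KK$ is wide. Your extension argument is essentially identical to the paper's (use heredity to lift along $K_2 \hookrightarrow X_2$, then push out along $K_1 \hookrightarrow X_1$; the paper merely performs these two steps in the opposite order and packages them in a $3\times 3$ diagram).

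Where you differ slightly is in the kernel and cokernel steps. The paper first establishes that $\KK$ is closed under images (using the already-proved closure under extensions together with closure under direct summands, the latter coming from Lemma~\ref{lemma:CokernelInNN} applied to an idempotent), and then reduces to the cases where $\phi$ is an epimorphism (for kernels) or a monomorphism (for cokernels) before building its $3\times 3$ diagrams. You bypass this reduction entirely: your pushout $P$ gives $\ker\phi=\ker(X_1\to P)$ directly, and your use of Lemma~\ref{lemma:CokernelInNN} on the composite $K_1\to X_2$ gives $\coker\phi=\ker(C\to\im f_2)$ directly. Both routes are valid; yours is marginally more economical in that it does not need the intermediate ``$\KK$ is closed under images'' step.
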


\begin{proof}
Let $\KK$ be the full subcategory of $\AA$ given by $K \in \KK \Leftrightarrow K \cong \ker(X \to Y)$ with $X,Y \in \NN$.  Note that $\NN \subseteq \KK \subseteq \WW$.  We shall show that $\KK = \WW$ by showing that $\KK$ is a wide subcategory of $\AA$.  Since $\NN$ is closed under images (Proposition \ref{proposition:Images}), we may assume that the aforementioned map $X \to Y$ is an epimorphism.  Thus assume that there are short exact sequences $0 \to A \to X \to Y \to 0$ and $0 \to B \to X' \to Y' \to 0$ where $X,Y,X',Y' \in \NN$.

First we will show that $\KK$ is closed under extensions.  Thus any $C \in \AA$ occurring in a short exact sequence $0 \to A \to C \to B \to 0$ is a kernel of a map in $\NN$.

We will show that there is a commutative diagram with exact rows
$$\xymatrix{
0 \ar[r] & A \ar[r] \ar[d] & C \ar[r] \ar[d] & B \ar[r] \ar@{=}[d] & 0 \\
0 \ar[r] & X \ar[r] \ar@{=}[d] & C' \ar[r] \ar[d] & B \ar[r] \ar[d] & 0 \\
0 \ar[r] & X \ar[r] & Z \ar[r] & X' \ar[r] & 0}$$
where the upper left square and the lower right square are both pushouts and pullbacks, and where the vertical maps are monomorphisms.  The middle exact sequence is constructed by taking the pushout of the span $X \leftarrow A \rightarrow C$.  For bottom lower exact sequence, one uses heredity to see that $\Ext(X',X) \to \Ext(B,X)$ is surjective; the bottom exact sequence is then an element from $\Ext(X',X)$ which gets mapped to the middle exact sequence.  Thus the lower right square is a pullback.  The diagram shows that $C$ is a subobject of $Z \in \NN$.  Using the Snake Lemma we may complete to the commutative diagram
$$\xymatrix{
&0\ar[d]&0\ar[d]&0\ar[d] \\
0 \ar[r] & A \ar[r] \ar[d] & C \ar[r] \ar[d] & B \ar[r] \ar[d] & 0 \\
0 \ar[r] & X \ar[r] \ar[d] & Z \ar[r] \ar[d] & X' \ar[r] \ar[d] & 0 \\
0 \ar[r] & Y \ar[r] \ar[d] & Z' \ar[r] \ar[d]& Y' \ar[r] \ar[d] & 0 \\
&0&0&0}$$
where the rows and the columns are exact.  This shows that $C$ is the kernel of the map $Z \to Z'$ in $\NN$. 

Next, assume that $C$ is a kernel of a map $A \to B$, where $A,B \in \KK$ as above.   Note that since $\KK$ is closed under direct summands (this follows from Lemma \ref{lemma:CokernelInNN} where the map $K \to X'$ is given by $K \stackrel{e}{\rightarrow} K \to X$ where $e \in \End(K)$ is an idempotent) and under extensions, it follows that $\KK$ is closed under images (as in the proof of Proposition \ref{proposition:Images}).  We wish to show that $C \in \KK$.  Since $\KK$ is closed under images, we may assume that $A \to B$ is an epimorphism.  There is a commutative diagram
$$\xymatrix{
&0\ar[d]&0\ar[d]&0\ar[d] \\
0 \ar[r] & C \ar[r] \ar[d] & A \ar[r] \ar[d] & B \ar[r] \ar[d] & 0 \\
0 \ar[r] & X \ar[r] \ar[d] & X \oplus X' \ar[r] \ar[d] & X' \ar[r] \ar[d] & 0 \\
0 \ar[r] & Z \ar[r] \ar[d] & Q \ar[r] \ar[d]& Y' \ar[r] \ar[d] & 0 \\
&0&0&0}$$
with exact rows and columns.  Here the map $A \to X \oplus X'$ is given by the monomorphism $A \to X$ in the first component and the map $A \to B \to X'$ in the second.  By Lemma \ref{lemma:CokernelInNN} we know that $Q \in \NN$.  Note that $C$ is the kernel of the map $X \to Z \to Q$ as required.

Finally, Let $C$ be a cokernel of a map $A \to B$.  As before, we may assume that this map is a monomorphism.  We now find a commutative diagram
$$\xymatrix{
&0\ar[d]&0\ar[d] \\
0 \ar[r] & A \ar[r] \ar[d] & B \ar[r] \ar[d] & C \ar[r] \ar[d] & 0 \\
0 \ar[r] & X' \ar@{=}[r] \ar[d] & X' \ar[r] \ar[d] & 0 \ar[r] \ar[d] & 0 \\
& Z \ar[r] \ar[d] & Y' \ar[r] \ar[d]& 0 \ar[r] \ar[d] & 0 \\
&0&0&0}$$
with exact rows and columns.  By Lemma \ref{lemma:CokernelInNN} we know that $Z \in \NN$.  The Snake Lemma now implies that $C$ is the kernel of the map $Z \to Y'$ in $\NN$.  This concludes the proof.
\end{proof}

\begin{corollary}\label{corollary:NarrowIsPretorsion}
A full subcategory $\NN \subseteq \AA$ is a narrow subcategory if and only if $\NN$ is a tilting nullity class in its wide closure.
\end{corollary}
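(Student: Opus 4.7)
The plan is to prove both implications using Proposition \ref{proposition:GeneratedInOneStep} and Lemma \ref{lemma:CokernelInNN} as the key inputs; the content of the corollary is essentially packaged in those two statements.

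For the forward direction, suppose $\NN$ is narrow and set $\WW = \wide_\AA \NN$. Since $\AA$ is hereditary, $\WW$ is a full abelian subcategory of $\AA$, so kernels, cokernels, and extensions in $\WW$ are computed in $\AA$; in particular $\NN$ inherits closure under extensions in $\WW$. To see that $\NN$ is closed under quotient objects of $\WW$, let $p\colon N \twoheadrightarrow Q$ be an epimorphism in $\WW$ with $N \in \NN$ and $K = \ker p \in \WW$. By Proposition \ref{proposition:GeneratedInOneStep} there exist $X, Y \in \NN$ with $K \cong \ker(X \to Y)$, and Lemma \ref{lemma:CokernelInNN} applied to the inclusion $K \hookrightarrow N$ places $Q = \coker(K \hookrightarrow N)$ in $\NN$. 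Thus $\NN$ is a nullity class in $\WW$. The tilting condition is then a direct rephrasing of Proposition \ref{proposition:GeneratedInOneStep}: every $A \in \WW$ is the kernel of a map $X \to Y$ in $\NN$, so the composition $A \hookrightarrow X$ gives the required monomorphism into $\NN$.

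For the reverse direction, suppose $\NN$ is a tilting nullity class in $\WW = \wide \NN$ (the tilting hypothesis will not actually be needed). Given a short exact sequence $0 \to N_1 \to M \to N_2 \to 0$ in $\AA$ with $N_1, N_2 \in \NN$, closure of $\WW$ under extensions in $\AA$ forces $M \in \WW$, and the sequence is exact in $\WW$; closure of $\NN$ under extensions in $\WW$ then yields $M \in \NN$. Similarly, given $f\colon N_1 \to N_2$ in $\NN$, the cokernel $C$ computed in $\AA$ lies in $\WW$ (as $\WW$ is closed under cokernels) and is a quotient of $N_2$ in $\WW$, hence $C \in \NN$ by the nullity class property. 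So $\NN$ is closed under extensions and cokernels in $\AA$, i.e.\ narrow.

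The only nontrivial step is the forward implication's closure under quotient objects, and that is handled by the preceding Proposition \ref{proposition:GeneratedInOneStep} together with Lemma \ref{lemma:CokernelInNN}; the rest amounts to tracking which closure properties transfer between $\NN$, $\WW$, and $\AA$.
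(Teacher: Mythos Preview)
Your proof is correct and follows essentially the same route as the paper. The only cosmetic difference is in the forward implication's closure-under-quotients step: the paper cites Proposition~\ref{proposition:Images} (given $N \twoheadrightarrow Q$ and a monomorphism $Q \hookrightarrow N'$ from Proposition~\ref{proposition:GeneratedInOneStep}, the composite $N \to N'$ has image $Q$), whereas you use Lemma~\ref{lemma:CokernelInNN} directly on the kernel. Both arguments rest on Proposition~\ref{proposition:GeneratedInOneStep} as the substantive input, so the difference is immaterial; your observation that the tilting hypothesis is unused in the reverse direction is also correct.
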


\begin{proof}
It follows from Propositions \ref{proposition:Images} and \ref{proposition:GeneratedInOneStep} that a narrow subcategory is indeed a tilting nullity class in its wide closure.  The other direction is straightforward.
\end{proof}

\begin{corollary}\label{corollary:NarrowIsTorsion}
Let $\NN$ be a coreflective narrow subcategory of $\AA$, then $\NN$ is a tilting torsion subcategory of $\wide \NN$.
\end{corollary}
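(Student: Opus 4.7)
The plan is to combine the previous corollary with a simple restriction argument for the right adjoint. By Corollary \ref{corollary:NarrowIsPretorsion} we already know that $\NN$ is a tilting nullity class in $\WW := \wide \NN$, i.e.\ $\NN$ is closed under quotients and extensions in $\WW$ and every object of $\WW$ embeds in some object of $\NN$. The only remaining content is to promote ``nullity class'' to ``torsion subcategory'' of $\WW$, which by the definition recalled in Section \ref{subsection:Torsion} amounts to showing that the inclusion $\NN \hookrightarrow \WW$ admits a right adjoint.

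For this, let $(-)_\NN : \AA \to \NN$ denote the right adjoint to the inclusion $\NN \hookrightarrow \AA$, which exists by the hypothesis that $\NN$ is coreflective in $\AA$. Given $W \in \WW$, I would take $W_\NN \in \NN$ (which lies in $\NN \subseteq \WW$) together with the universal map $\varphi_W : W_\NN \to W$ in $\AA$; since both source and target lie in $\WW$ and $\WW$ is full in $\AA$, this is also a morphism in $\WW$. For any $N \in \NN \subseteq \WW$, fullness of $\WW \subseteq \AA$ and of $\NN \subseteq \AA$ gives the natural bijections
$$\Hom_\WW(N,W) = \Hom_\AA(N,W) \;\cong\; \Hom_\AA(N,W_\NN) = \Hom_\NN(N,W_\NN),$$
where the middle isomorphism is the adjunction for $(-)_\NN$ in $\AA$. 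Thus $W \mapsto W_\NN$ is right adjoint to $\NN \hookrightarrow \WW$, so $\NN$ is coreflective in $\WW$.

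Combined with closure of $\NN$ under quotients and extensions (inherited from its being narrow in $\AA$, since these closure properties are the same whether computed in $\AA$ or in the full subcategory $\WW$), this shows $\NN$ is a torsion subcategory of $\WW$. The tilting property is already contained in Corollary \ref{corollary:NarrowIsPretorsion}, so we are done. No step is really an obstacle: the only point worth pausing over is the trivial remark that fullness of the two inclusions $\NN \subseteq \WW \subseteq \AA$ makes the $\AA$-adjunction descend to $\WW$ without modification.
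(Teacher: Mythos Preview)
Your proof is correct and matches the paper's intended argument: the paper states this corollary with no proof, treating it as immediate from Corollary~\ref{corollary:NarrowIsPretorsion} together with the observation that a right adjoint to $\NN \hookrightarrow \AA$ restricts, via fullness, to a right adjoint to $\NN \hookrightarrow \wide \NN$. You have simply spelled out that restriction step explicitly, which is exactly the missing ingredient upgrading ``nullity class'' to ``torsion class.''
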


\begin{example}
When $\AA$ is the category of finitely generated modules over a commutative Noetherian ring, then the narrow subcategories have been classified in \cite[Theorem 4]{StanleyWang11}.  In this case, a narrow subcategory is equal to its wide closure.
\end{example}

\begin{remark}
We refer the reader to Proposition \ref{proposition:ProjectiveLineNarrow} below for a classification of narrow subcategories on the category $\coh \bP^1$ of coherent sheaves on a smooth projective line.
\end{remark}

\section{Narrow sequences and preaisles}\label{Section:NarrowSequence}

Recall from Definition \ref{definition:Narrow} that a narrow subcategory of $\AA$ is a subcategory which is closed under extensions and cokernels.  In this section, we shall consider narrow sequences (Definition \ref{definition:NarrowSequence}) and consider the connection with preaisles (Theorem \ref{theorem:NarrowPreaisle}).


\begin{definition}\label{definition:NarrowSequence}
A sequence $(\NN(k))_{k\in \bZ}$ of narrow subcategories is called a \emph{narrow sequence} if for all $k \in \bZ$
\begin{enumerate}
\item $\NN(k) \subseteq \NN(k+1)$, and
\item if $A \rightarrow B \rightarrow C \rightarrow D \rightarrow E$ is exact with $A \in \NN(k+1)$ and $B, D \in \NN(k)$ and $E \in \NN(k-1)$, then $C \in \NN(k)$.
\end{enumerate}
\end{definition}

\begin{lemma}\label{lemma:OnlySequence}
If $(\CC(k))_{k \in \bZ}$ is a sequence of full subcategories (each containing $0$) satisfying the conditions (1) and (2) of Definition \ref{definition:NarrowSequence}, then each $\CC(k)$ is automatically a narrow subcategory.
\end{lemma}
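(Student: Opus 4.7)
The plan is to show that conditions (1) and (2) alone force each $\CC(k)$ to be closed under extensions and cokernels, by specializing the five-term exact sequence in condition (2) with zero objects in appropriate slots.

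For closure under extensions, I would start with a short exact sequence $0 \to B \to C \to D \to 0$ with $B, D \in \CC(k)$, and simply pad it on both sides with zeros to get a five-term exact sequence $0 \to B \to C \to D \to 0$. Since each $\CC(j)$ contains $0$, the hypothesis on the endpoints (namely $0 \in \CC(k+1)$ and $0 \in \CC(k-1)$) is automatic, and condition (2) immediately yields $C \in \CC(k)$.

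For closure under cokernels, I would take a map $f: X \to Y$ with $X, Y \in \CC(k)$ and consider the exact sequence $X \to Y \to \coker f \to 0 \to 0$. Setting $A = X$, $B = Y$, $C = \coker f$, $D = E = 0$, one checks the hypotheses of condition (2): $A = X \in \CC(k) \subseteq \CC(k+1)$ by condition (1), $B = Y \in \CC(k)$, $D = 0 \in \CC(k)$, and $E = 0 \in \CC(k-1)$. Then condition (2) gives $\coker f \in \CC(k)$ as required.

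Since both defining properties of a narrow subcategory (closure under extensions and cokernels) follow this way, each $\CC(k)$ is narrow. There is no genuine obstacle here; the only subtlety is using condition (1) to lift $X \in \CC(k)$ into $\CC(k+1)$ so that the cokernel argument fits the shape of condition (2).
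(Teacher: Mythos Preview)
Your proposal is correct and is essentially identical to the paper's own proof: the paper also takes $A \cong 0 \cong E$ to get closure under extensions, and uses $X \in \CC(k) \subseteq \CC(k+1)$ (your condition (1) observation) with the exact sequence $X \to Y \to \coker f \to 0 \to 0$ to get closure under cokernels. You have simply spelled out the details a bit more explicitly.
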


\begin{proof}
That $\CC(k)$ is closed under extensions follows from taking $A \cong 0 \cong E$ in the second condition.  To show that is it closed under cokernels, let $f:X \to Y$ be a morphism in $\CC(k)$.  Since $X \in \CC(k) \subseteq \CC(k+1)$, the second condition implies $\coker f \in \CC(k)$ as required.
\end{proof}

\begin{proposition}\label{proposition:KernelInNext}
Let $(\NN(k))_k$ be a narrow sequence.  If $f:D \to E$ is a map between objects in $\NN(k)$, then $\ker f \in \NN(k+1)$.
\end{proposition}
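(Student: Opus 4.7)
The plan is to apply condition (2) of Definition \ref{definition:NarrowSequence} directly, but with the index shifted by one. The useful observation is that the narrow sequence axiom, when read at level $k+1$ rather than $k$, asserts precisely the kind of statement we want: something about the middle of a five term exact sequence whose outer terms sit in $\NN(k+2)$, $\NN(k+1)$, $\NN(k+1)$, $\NN(k)$ forcing membership in $\NN(k+1)$.

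Concretely, given $f : D \to E$ with $D, E \in \NN(k)$, I would consider the five term sequence
$$0 \longrightarrow 0 \longrightarrow \ker f \longrightarrow D \stackrel{f}{\longrightarrow} E,$$
which is exact (the only non-tautological check is at $D$, where the image of the monomorphism $\ker f \hookrightarrow D$ equals the kernel of $f$ by definition). Now apply condition (2) of Definition \ref{definition:NarrowSequence} with $k$ replaced by $k+1$: the first term $0$ lies in $\NN(k+2)$ trivially; the second term $0$ and the fourth term $D$ lie in $\NN(k+1)$, using condition (1) (monotonicity) to promote $D \in \NN(k)$ to $D \in \NN(k+1)$; and the fifth term $E$ lies in $\NN(k)$ by assumption. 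The axiom then gives $\ker f \in \NN(k+1)$, as required.

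There is essentially no obstacle here — this is a one line application of the definition once the correct exact sequence is written down. The content of the lemma is really that the narrow sequence axiom was designed precisely so that ``taking kernels moves you up one step in the filtration'', mirroring the behavior of homology under the boundary of a triangle, which is exactly what will be needed later when translating between preaisles in $\Db \AA$ and their homologies.
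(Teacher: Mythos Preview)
Your proof is correct and is essentially identical to the paper's own argument: both apply condition (2) of Definition \ref{definition:NarrowSequence} at level $k+1$ to the exact sequence $0 \to 0 \to \ker f \to D \to E$, using monotonicity to place $D$ in $\NN(k+1)$. The paper simply writes the sequence more tersely as $0 \to \ker f \to D \to E$, leaving one zero implicit.
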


\begin{proof}
This follows from the short exact sequence $0 \to \ker f \to D \to E$ with $D \in \NN(k) \subseteq \NN(k+1)$ so that $\ker f \in \NN(k+1)$.
\end{proof}

\begin{corollary}\label{corollary:GrowingFastEnough}
If $\AA$ is a hereditary category and $(\NN(k))_k$ is a narrow sequence in $\AA$, then $\wide (\NN(k)) \subseteq \NN(k+1)$.
\end{corollary}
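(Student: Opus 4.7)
The proof is essentially a direct combination of two results already established in the excerpt, so the plan is very short.

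The strategy is to pick an arbitrary object $W \in \wide(\NN(k))$ and show it lies in $\NN(k+1)$. By Proposition \ref{proposition:GeneratedInOneStep} (which uses that $\AA$ is hereditary and $\NN(k)$ is narrow), $W$ can be realized as the kernel of some morphism $f \colon D \to E$ with $D, E \in \NN(k)$. Then Proposition \ref{proposition:KernelInNext}, applied to the narrow sequence $(\NN(k))_{k \in \bZ}$, immediately gives $\ker f \in \NN(k+1)$. Since $\NN(k+1)$ is a (replete) full subcategory, this yields $W \in \NN(k+1)$.

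No step here looks hard; the content of the corollary is really already packaged in Propositions \ref{proposition:GeneratedInOneStep} and \ref{proposition:KernelInNext}, and the role of this corollary is to advertise the consequence that a narrow sequence must grow at least as fast as the wide closures of its entries. The only conceptual point worth underlining in the writeup is that Proposition \ref{proposition:GeneratedInOneStep} crucially exhibits every element of the wide closure as a kernel of a single morphism in $\NN(k)$ (not, say, as some iterated construction), which is exactly the hypothesis needed to feed into Proposition \ref{proposition:KernelInNext}. If one tried to prove the corollary directly from the defining properties of $\wide$ (closure under kernels, cokernels, extensions), one would have to induct along a generation procedure; that inductive argument is what Proposition \ref{proposition:GeneratedInOneStep} has already absorbed.
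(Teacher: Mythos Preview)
Your proof is correct and follows exactly the same approach as the paper, which simply cites Propositions \ref{proposition:GeneratedInOneStep} and \ref{proposition:KernelInNext}. Your elaboration of how these two propositions fit together is accurate and matches the intended argument.
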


\begin{proof}
This follows from Propositions \ref{proposition:GeneratedInOneStep} and \ref{proposition:KernelInNext}.
\end{proof}

\begin{definition}\label{definition:Mu}
Let $\UU$ be a preaisle.  We define a sequence $\mu(\UU) = \{\CC(k)\}_k$ of subcategories of $\AA$ by $\CC(k) = H_{k} \UU$.
\end{definition}

\begin{definition}\label{definition:HomologyDetermined}
Let $\UU$ be a preaisle in $\Db \AA$.  We will say that $\UU$ is \emph{homology-determined} if for all $X \in \Db \AA$ we have
$$X \in \UU \Longleftrightarrow \forall k \in \bZ: (H_k X) [k] \in \UU.$$
\end{definition}

Not all preaisles are homology-determined (we refer to Example \ref{example:BadPreaisles} and \ref{example:Preaisle}).  The following proposition shows that, in case $\AA$ is hereditary, homology-determined preaisles lie between preaisles and aisles.

\begin{proposition}\label{proposition:RetractsCoproducts}
Let $\AA$ be a hereditary category.  Every aisle $\UU \subseteq \Db \AA$ is a homology-determined preaisle.
\end{proposition}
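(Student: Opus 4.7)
The plan is to prove both implications of the homology-determined condition using two key inputs already established in the paper: the splitting decomposition $X \cong \coprod_{k \in \bZ}(H_k X)[k]$ available in $\Db \AA$ when $\AA$ is hereditary, and Lemma \ref{lemma:AdjointRetract}, which says that coreflective subcategories (in particular, aisles) are closed under retracts.

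For the direction $(\Leftarrow)$, I would start from the assumption that $(H_k X)[k] \in \UU$ for every $k \in \bZ$. Because $X$ lies in the bounded derived category, the coproduct $X \cong \coprod_k (H_k X)[k]$ is in fact a finite direct sum. Any binary direct sum $A \oplus B$ of objects in $\UU$ fits into the split triangle $A \to A \oplus B \to B \to A[1]$, so since $\UU$ is closed under extensions (being a preaisle) one gets $A \oplus B \in \UU$. Iterating finitely many times gives $X \in \UU$.

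For the direction $(\Rightarrow)$, suppose $X \in \UU$. Using again the decomposition $X \cong \coprod_k (H_k X)[k]$ valid because $\AA$ is hereditary, each summand $(H_k X)[k]$ is a retract of $X$. Since $\UU$ is an aisle, the embedding $\UU \hookrightarrow \Db \AA$ has a right adjoint, so Lemma \ref{lemma:AdjointRetract} applies and $\UU$ is closed under retracts; thus $(H_k X)[k] \in \UU$ for every $k$.

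There is no real obstacle: both directions reduce immediately to combining the hereditary splitting with a single closure property of $\UU$ (closure under extensions for one direction, closure under retracts for the other). The only point worth emphasizing is that the retract-closure step is exactly where the hypothesis ``aisle'' (rather than merely ``preaisle'') is used; this matches the later Example \ref{example:BadPreaisles} showing that general preaisles fail to be homology-determined.
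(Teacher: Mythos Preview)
Your proof is correct and follows essentially the same approach as the paper: both directions use the hereditary splitting $X \cong \coprod_k (H_k X)[k]$, invoking Lemma~\ref{lemma:AdjointRetract} for the forward direction and closure under extensions (via finite direct sums in $\Db \AA$) for the converse. Your write-up is in fact slightly more explicit than the paper's, which simply asserts $\coprod_k (X_k[k]) \in \UU$ without spelling out the extension argument.
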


\begin{proof}
Let $\UU$ be an aisle in $\Db \AA$.  If $X \in \UU$ is then by Lemma \ref{lemma:AdjointRetract} we know that $X_k [k] \in \UU$.  Conversely, if $X_k[k] \in \UU$, for all $k \in \bZ$, then also $X \cong \coprod_k (X_k [k]) \in \UU$.
\end{proof}

\begin{proposition}\label{proposition:PreaisleNarrow}
Let $\UU$ be a homology-determined preaisle in $\Db \AA$, where $\AA$ is an abelian category.  The sequence $(\CC(k))_k = \mu (\UU)$ is a narrow sequence in $\AA$.
\end{proposition}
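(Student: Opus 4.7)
By Lemma \ref{lemma:OnlySequence}, it suffices to verify conditions (1) and (2) of Definition \ref{definition:NarrowSequence}. The first step I would take is to record the following clean reformulation of homology-determinedness: for $Z \in \AA$,
\[ Z \in \CC(k) \iff Z[k] \in \UU. \]
The ``$\Leftarrow$'' direction is immediate from $\CC(k) = H_k \UU$, while for ``$\Rightarrow$'', if $Z = H_k U$ for some $U \in \UU$ then applying the homology-determined property to $U$ gives $Z[k] = H_k(U)[k] \in \UU$. With this reformulation, condition (1) is routine: if $Z[k] \in \UU$, then $Z[k+1] = Z[k][1] \in \UU$ since preaisles are closed under suspension.

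Condition (2) is the main content. Given an exact sequence $A \to B \to C \to D \to E$ with $A \in \CC(k+1)$, $B, D \in \CC(k)$, and $E \in \CC(k-1)$, I would factor through $M = \im(B \to C) = \ker(C \to D)$ and $N = \im(C \to D) = \ker(D \to E)$, producing a short exact sequence $0 \to M \to C \to N \to 0$. The corresponding triangle $M[k] \to C[k] \to N[k] \to M[k+1]$ in $\Db \AA$ then reduces the problem, via closure of $\UU$ under extensions, to showing that both $M[k]$ and $N[k]$ lie in $\UU$.

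Both are built by the same cone trick. Let $W = \cone(A \to B)$; the long exact sequence of its defining triangle gives $H_0 W = M$ and $H_1 W = \ker(A \to B)$, and shifting then rotating the defining triangle yields $B[k] \to W[k] \to A[k+1] \to B[k+1]$, exhibiting $W[k]$ as an extension of $A[k+1] \in \UU$ by $B[k] \in \UU$. Hence $W[k] \in \UU$, and the stalk-extraction principle from the first paragraph then forces $M[k] = H_k(W[k])[k] \in \UU$. Dually, $V = \cone(D \to E)$ has $H_0 V = \coker(D \to E)$ and $H_1 V = N$, and the analogous rotated shift reads $E[k-1] \to V[k-1] \to D[k] \to E[k]$, exhibiting $V[k-1] \in \UU$ as an extension of $D[k]$ by $E[k-1]$; extracting the stalk again gives $N[k] \in \UU$.

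The conceptual crux is the stalk-extraction principle for homology-determined preaisles: homology-determinedness is exactly what allows one to peel off individual homological stalks from any object known to lie in $\UU$. Once this is in hand, the only genuine design choice is picking the right cones; the natural temptation is to try to build $C[k]$ directly from $B[k]$ and $D[k]$, but those alone are insufficient. The ``outer'' hypotheses at degrees $k{+}1$ and $k{-}1$ are precisely what is needed to realise $M$ and $N$ as stalks of cones in $\UU$, and these two cones then reassemble $C$ via its natural two-step filtration.
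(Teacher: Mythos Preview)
Your proof is correct and follows essentially the same approach as the paper. Both arguments use the cones of $A \to B$ and $D \to E$ (shifted appropriately) to place $\coker(A\to B)$ and $\ker(D\to E)$ in $\CC(k)$, and then conclude via closure under extensions; your explicit ``stalk-extraction principle'' $Z \in \CC(k) \iff Z[k] \in \UU$ is exactly what the paper uses implicitly when it invokes homology-determinedness.
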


\begin{proof}
By Lemma \ref{lemma:OnlySequence} we only need to show that $(\CC(k))_{k \in \bZ}$ satisfies the conditions (1) and (2) from Definition \ref{definition:NarrowSequence}.  Since $\UU$ is closed under suspension, we see that $\CC(k) \subseteq \CC(k+1)$.

For the second condition, let $A \stackrel{f}{\rightarrow} B \rightarrow C \rightarrow D \stackrel{g}{\rightarrow} E$ be exact with $A \in \CC(k+1)$ and $B, D \in \CC(k)$ and $E \in \CC(k-1)$.  Since $\UU$ is homology-determined, we know that $A[k+1], B[k], D[k], E[k-1] \in \UU$.  Since preaisles are closed under extensions, we know that $\UU$ contains both $\cone(f[k]:A[k] \to B[k])$ and $\cone(g[k-1]:D[k-1] \to E[k-1])$.

Thus $\coker f \cong H_k(\cone (f[k])) \in \CC(k)$ and $\ker g \cong H_k(\cone (g[k-1])) \in \CC(k)$.  It is clear that $\CC(k)$ is closed under extensions (since $\UU$ is homology-determined and closed under extensions), hence $C \in \CC(k)$.
\end{proof}

\begin{definition}\label{definition:Theta}
For a narrow sequence $(\NN(k))_k$, define 
$$\theta ((\NN(k))_k) = \{ A\in \Db {\AA} | \mbox{$H_{k}(A)\in \NN(k)$ for  all $k \in \bZ$}\}.$$ 
\end{definition}

\begin{proposition}\label{proposition:NarrowPreaisle}
For a narrow sequence $(\NN(k))_{k \in \bZ}$, the category $\theta ((\NN(k))_{k\in \bZ}) \subseteq \Db \AA$ is a homology-determined preaisle.
\end{proposition}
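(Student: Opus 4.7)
The plan is to verify three properties in sequence: closure under suspension, closure under extensions, and the homology-determined condition. Throughout, I will just unfold the definition of $\theta$ and apply the two axioms of a narrow sequence together with the long exact sequence of homology.

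First, for closure under suspension, observe that for any $X \in \Db \AA$ one has $H_k(X[1]) = H_{k-1}(X)$. So if $X \in \theta((\NN(k))_k)$, then $H_k(X[1]) = H_{k-1}(X) \in \NN(k-1) \subseteq \NN(k)$ by condition (1) of Definition \ref{definition:NarrowSequence}, and hence $X[1] \in \theta((\NN(k))_k)$.

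Next, for closure under extensions, suppose $X \to Y \to Z \to X[1]$ is a triangle in $\Db \AA$ with $X, Z \in \theta((\NN(k))_k)$. The associated long exact homology sequence contains the five-term exact piece
$$H_{k+1}(Z) \longrightarrow H_{k}(X) \longrightarrow H_{k}(Y) \longrightarrow H_{k}(Z) \longrightarrow H_{k-1}(X),$$
and by hypothesis we have $H_{k+1}(Z) \in \NN(k+1)$, $H_{k}(X), H_{k}(Z) \in \NN(k)$, and $H_{k-1}(X) \in \NN(k-1)$. Condition (2) of Definition \ref{definition:NarrowSequence} then forces $H_k(Y) \in \NN(k)$. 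Since this holds for every $k$, we obtain $Y \in \theta((\NN(k))_k)$.

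Finally, the homology-determined property is essentially built into the definition. For any $X \in \Db \AA$, the stalk complex $H_k(X)[k]$ has homology $H_k(X)$ in degree $k$ and $0$ elsewhere, so $H_k(X)[k] \in \theta((\NN(k))_k)$ if and only if $H_k(X) \in \NN(k)$ (using that each $\NN(j)$ contains $0$). Therefore the condition that $H_k(X)[k] \in \theta((\NN(k))_k)$ for every $k \in \bZ$ is literally the condition $H_k(X) \in \NN(k)$ for every $k$, which by Definition \ref{definition:Theta} is exactly $X \in \theta((\NN(k))_k)$.

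The only step that required real input is the extension closure, and even there the argument is a direct match between the five-term exact sequence from the triangle and the five-term condition (2) of a narrow sequence; the rest is formal. No obstacle is anticipated.
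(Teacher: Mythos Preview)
Your proof is correct and follows essentially the same approach as the paper's own proof: verify closure under suspension via $\NN(k) \subseteq \NN(k+1)$, closure under extensions by matching the five-term piece of the long exact homology sequence against condition~(2) of Definition~\ref{definition:NarrowSequence}, and observe that the homology-determined property is immediate from the definition of $\theta$. Your write-up is in fact slightly more explicit than the paper's, but the arguments are identical.
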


\begin{proof}
Since $\NN(k) \subseteq \NN(k+1)$, we see that $\theta ((\NN(k))_k)$ is closed under suspensions.

Let $X \to Y \to Z \to X[1]$ be a triangle in $\Db \AA$ where $X,Z \in \theta ((\NN(k))_k)$.  From this triangle, we get the long exact sequence
$$\cdots \to H_{k+1} Z \to H_{k} X \to H_{k} Y \to H_{k} Z \to H_{k-1} X \to \cdots$$
in $\AA$.  We conclude that $Y \in \theta ((\NN(k))_k)$.

Furthermore, it follows directly from the definitions that $\theta ((\NN(k))_k)$ is homology-determined.
\end{proof}

The following theorem is the main result of this section.  It relates narrow sequences, a structure in the abelian category, to homology-determined preaisles, a structure in the derived category.

\begin{theorem}\label{theorem:NarrowPreaisle}
Let $\AA$ be an abelian category.  The functions $\mu$ from Definition \ref{definition:Mu} and $\theta$ from Definition \ref{definition:Theta} give inverse bijections
$$\begin{array}{lcr}
\left\{
\mbox{Narrow sequences in $\AA$}
\right\}&
\longleftrightarrow&
\left\{ \mbox{Homology-determined preaisles in $\Db \AA$} \right\}.
\end{array}$$
\end{theorem}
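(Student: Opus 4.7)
The plan is to combine Propositions \ref{proposition:PreaisleNarrow} and \ref{proposition:NarrowPreaisle} (which already show that $\mu$ and $\theta$ take values in the claimed targets) with a direct verification that the two composites are the identity. The main content is thus to show $\mu \circ \theta = \id$ and $\theta \circ \mu = \id$.

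For $\mu \circ \theta = \id$, let $(\NN(k))_{k \in \bZ}$ be a narrow sequence and set $\UU = \theta((\NN(k))_k)$. I need to show $H_k(\UU) = \NN(k)$ for each $k$. The inclusion $H_k(\UU) \subseteq \NN(k)$ is immediate from the definition of $\theta$. For the reverse inclusion, given $A \in \NN(k)$, consider the stalk complex $A[k]$; since $H_j(A[k])$ equals $A$ for $j=k$ and $0$ otherwise, and since $0$ lies in every $\NN(j)$, we get $A[k] \in \UU$, so $A = H_k(A[k]) \in H_k(\UU)$.

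For $\theta \circ \mu = \id$, let $\UU$ be a homology-determined preaisle. I must show that $X \in \UU$ if and only if $H_k X \in H_k(\UU)$ for every $k \in \bZ$. The forward direction is immediate: if $X \in \UU$ then $H_k X$ is manifestly in $H_k(\UU)$. For the converse, suppose $H_k X \in H_k(\UU)$ for every $k$; then for each $k$ there exists $U_k \in \UU$ with $H_k U_k \cong H_k X$. Here the homology-determined hypothesis on $\UU$ is essential: applied to $U_k$ it gives $(H_k U_k)[k] \in \UU$, hence $(H_k X)[k] \in \UU$ for every $k$. Applying the converse direction of the homology-determined property now to $X$ itself, we conclude $X \in \UU$.

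The argument is essentially formal once the preparatory work is in place; no single step is difficult. The only point that deserves care is the last one above, where the homology-determined condition is used twice in opposite directions: once to move from ``$H_k X$ equals some $H_k U_k$'' to ``$(H_k X)[k] \in \UU$,'' and once to conclude $X \in \UU$ from knowing each shifted homology lies in $\UU$. Without the homology-determined assumption, neither implication is available, which is precisely why the bijection is stated at the level of homology-determined preaisles rather than general preaisles.
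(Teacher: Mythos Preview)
Your proof is correct and follows essentially the same approach as the paper. The paper's proof is terser---it asserts both composites are the identity ``by definition of $\mu$ and $\theta$''---while you spell out the two inclusions for $\mu\circ\theta=\id$ and correctly isolate the two-fold use of the homology-determined hypothesis in verifying $\theta\circ\mu=\id$; this added detail is accurate and matches what the paper's one-line argument is implicitly relying on.
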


\begin{proof}
Propositions \ref{proposition:PreaisleNarrow} and \ref{proposition:NarrowPreaisle} yield that $\theta((\NN(k))_k)$ is indeed a homology-determined preaisle as in the statement of the theorem and that $\mu(\UU)$ is a narrow sequence.

For a narrow sequence $(\NN(k))_k$, we have $(\NN(k))_k = \mu(\theta((\NN(k))_k))$ by definition of $\mu$ and $\theta$.  Likewise, for a homology-determined preaisle $\UU$, we have $\UU = \theta (\mu(\UU))$.  This shows that $\mu$ and $\theta$ are inverse bijections.
\end{proof}

\begin{example}\label{example:BadPreaisles}
The full subcategory $\UU$ of $\Db \mod k$ generated by all objects $X$ with $H_k X = 0$ when $k < 0$ and $\sum_k \dim H_k X$ even is a preaisle.  We have however
$$H_k \UU \cong \left\{\begin{array}{ll} 0 & k < 0 \\ \mod k & k \geq 0 \end{array} \right.$$
so that $\theta (\{H_k \UU\}_k)$ is the standard aisle.  Note that $\UU$ is not homology-determined.
\end{example}

\begin{example}\label{example:Preaisle}
Consider the quiver $Q:1 \stackrel{\alpha}{\rightarrow} 2 \stackrel{\beta}{\rightarrow} 3$ with relation $\beta\alpha =0$.  Let $P_1,P_2$ be indecomposable projective representations associated with the vertices $1$ and $2$ respectively, and let $f:P_2 \to P_1$ be a nonzero map.  It is straightforward to check that
$$\cdots \longrightarrow 0 \longrightarrow P_2 \stackrel{f}{\longrightarrow} P_1 \longrightarrow 0 \longrightarrow \cdots$$
represents a complex $X \in \Db \rep Q \cong \Kb \proj (\rep Q)$ with $\End^\bullet (X) \cong k$ (thus $X$ is an exceptional object), so that the full additive subcategory $\UU$ of $\Db \rep Q$ spanned by $X[n]$ (for all $n \in \bZ$) is an aisle in $\Db \rep Q$ (this can either be checked directly, or one can use that $\UU \cong \Db \mod k$ such that the right adjoint to $\UU \to \Db \rep Q$ follows from \cite{BondalKapranov89}).

We have constructed an aisle $\UU$ in $\Db \rep Q$ which is not homology-determined (since $H_0(\UU)$ contains the simple modules $S_1$ and $S_3$ associated to the vertices $1$ and $3$).
\end{example}
\section{\texorpdfstring{$t$-Structures for a weighted projective line}{t-Structures for a weighted projective line}}\label{section:ProjectiveLine}

In this section, we will determine the set of $t$-structures on $\coh \bP^1$, the category of coherent sheaves on $\bP^1$, using techniques from this paper.  Let $\OO$ be the structure sheaf and $\OO(n)$ be the $n^{\text {th}}$ Serre twist of $\OO$ (where $n \in \bZ$).  For a closed point $P \in \bP^1$, let $k(P)$ be a simple sheaf supported on $P$.  We will also use the short exact sequence
$$0 \to \OO(n) \to \OO(n+1) \to k(P) \to 0$$
which exists for all $n \in \bZ$ and all closed $P \in \bP^1$.

As a starting point, we will consider the wide subcategories of $\coh \bP^1$; these are well-known.  There are four types.

\begin{proposition}\label{proposition:ProjectiveLineWide}
All wide subcategories of $\coh \bP^1$ occur in the following list.
\begin{enumerate}
\item The zero subcategory is wide.
\item For each subset $P \subseteq \bP^1_k$, the set of torsion sheaves supported on $P$ forms a wide subcategory.
\item For each $n \in \bZ$, the full additive subcategory generated by $\OO(n)$ is wide in $\coh \bP^1$.
\item The category $\coh \bP^1$ is a wide subcategory of itself.
\end{enumerate}
\end{proposition}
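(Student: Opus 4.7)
The plan is to split a nonzero wide subcategory $\WW \subseteq \coh \bP^1$ into cases based on which indecomposable sheaves it contains. I will use the following standard structural properties of $\coh \bP^1$: every coherent sheaf decomposes canonically as the direct sum of a torsion sheaf and a vector bundle, every vector bundle is a direct sum of line bundles $\OO(n)$ (Grothendieck), and every torsion sheaf is a finite direct sum of the uniserial indecomposables $k(P)^{(m)}$ supported at single closed points. In addition, $\WW$ is closed under direct sums, retracts, and images (the latter because wide subcategories are closed under kernels and cokernels).

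As a preliminary observation, if $T \in \WW$ is a nonzero torsion sheaf supported at the closed point $P$, then $k(P) \in \WW$. Indeed, write $T$ as a direct sum of indecomposables $k(P)^{(m_i)}$, pick a summand $k(P)^{(m)}$ (which is in $\WW$ by retract closure), and observe that the image of a suitable nilpotent endomorphism of $k(P)^{(m)}$ is isomorphic to $k(P)$; image closure finishes the argument. Combined with the torsion/torsion-free decomposition, this handles the case when $\WW$ contains no vector bundle: set $S = \{P \in \bP^1 : k(P) \in \WW\}$, and closure under extensions together with the non-split sequences $0 \to k(P)^{(m-1)} \to k(P)^{(m)} \to k(P) \to 0$ guarantees that $\WW$ contains every torsion sheaf supported on $S$, yielding type (2).

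If $\WW$ contains a non-torsion sheaf, then the canonical torsion/torsion-free decomposition (realised by idempotents in the endomorphism ring) together with retract closure puts a nonzero vector bundle in $\WW$; Grothendieck and retract closure then yield a line bundle $\OO(n) \in \WW$. The key subclaim is that if $\WW$ also contains either a second line bundle $\OO(m)$ with $m \neq n$ or any nonzero torsion sheaf, then $\WW = \coh \bP^1$. For the first case, given $\OO(n), \OO(m) \in \WW$ with $n < m$, any nonzero map $\OO(n) \to \OO(m)$ is injective with a nonzero torsion cokernel, so by the preliminary observation some $k(P) \in \WW$. Then the short exact sequence $0 \to \OO(n) \to \OO(n+1) \to k(P) \to 0$ exhibits $\OO(n+1)$ as an extension in $\WW$, while $\OO(n-1)$ appears as the kernel of a nonzero map $\OO(n) \to k(P)$; induction yields $\OO(k) \in \WW$ for every $k \in \bZ$. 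For any other closed point $Q$, the cokernel of the inclusion $\OO(k) \to \OO(k+1)$ associated to the section vanishing at $Q$ realises $k(Q) \in \WW$. Closure under direct sums and extensions then produces every object of $\coh \bP^1$, giving type (4).

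The only remaining case is $\OO(n) \in \WW$ for a unique integer $n$ and no torsion sheaf in $\WW$. By the torsion/torsion-free decomposition and Grothendieck, every object of $\WW$ is then a direct sum of copies of $\OO(n)$, and the full subcategory of such direct sums is already wide (using $\End(\OO(n)) = k$ and $\Ext^1(\OO(n), \OO(n)) = 0$, kernels, cokernels and extensions all stay in this subcategory); this yields type (3). The step I expect to require the most care is the key subclaim in the third paragraph, where the abstract closure axioms of a wide subcategory must be translated into the concrete short exact sequences on $\bP^1$ that propagate a single line bundle together with one auxiliary indecomposable into every indecomposable sheaf. Once that is secured, the rest of the case analysis assembles quickly from the structural description of $\coh \bP^1$.
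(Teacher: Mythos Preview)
Your argument is correct and entirely self-contained. The paper's own proof is much terser: it simply declares the claim ``straightforward to check'' and, as an alternative route, invokes the derived equivalence between $\coh \bP^1$ and $\mod K$ for the Kronecker algebra $K$ (citing Br\"uning) together with the existing classification of wide subcategories of $\mod K$ (citing Krause and Dichev). In other words, the paper outsources the verification to the literature, whereas you carry out the case analysis directly in $\coh \bP^1$ using Grothendieck's splitting theorem and the explicit short exact sequences $0 \to \OO(n) \to \OO(n+1) \to k(P) \to 0$. Your approach has the advantage of making transparent exactly which closure axioms of a wide subcategory drive the propagation from a single line bundle together with one simple torsion sheaf to all of $\coh \bP^1$; the paper's route avoids repeating any concrete computation by leaning on a classification already established elsewhere.
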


\begin{proof}
The claim is straightforward to check.  Alternatively, one can use \cite[Example 5.7]{Bruning07} to see that the lattice of wide subcategories of $\coh \bP^1$ corresponds to the lattice of wide subcategories of $\mod K$ where $K$ is the Kronecker algebra; the lattice of wide subcategories in the latter category has been classified in \cite[Proposition 6.13]{Krause11}, see also \cite[Theorem 3.2.15]{Dichev09}
\end{proof}

By Corollary \ref{corollary:NarrowIsPretorsion}, we find all narrow subcategories of $\coh \bP^1$ as tilting nullity classes in the above wide subcategories.

For the first three types of the wide subcategories, there is only one choice of a tilting nullity class, which is then given by the wide subcategory itself.  Thus to understand the narrow subcategories in $\coh \bP^1$, we need only to consider the tilting nullity classes $\TT$ in $\coh \bP^1$.

It is clear that $\TT$ needs to contain all the torsion sheaves.  Furthermore, since there is no monomorphism $\OO \to T$ for any torsion sheaf $T$, we know that $\TT$ has to contain at least one line bundle $\OO(n)$.  For any closed $p \in \bP^1$, let $k(p) \in \coh \bP^1$ be the simple sheaf supported on $p$.  Since there is a short exact sequence $0 \to \OO(n) \to \OO(n+1) \to k(p) \to 0$, we know that $\OO(m) \in \TT$, for all $m \geq n$.  This means that either there is a minimal $n \in \bZ$ such that $\OO(n) \in \TT$, or all vector bundles lie in $\TT$.  Note that in the former case, every object $X \in \TT$ is a quotient of $\OO(n)^{\oplus l}$, where $l \in \bN$ depends on $X$.  We will denote this subcategory by $\Gen(\OO(n))$.  Thus we have the following five types of narrow subcategories of $\coh \bP^1$.

\begin{proposition}\label{proposition:ProjectiveLineNarrow}
The following subcategories of $\coh \bP^1$ are the only narrow subcategories:
\begin{enumerate}
\item the zero subcategory,
\item for each nonempty subset $P \subseteq \bP^1_k$, the set of torsion sheaves supported on $P$,
\item for each $n \in \bZ$, the full additive subcategory generated by $\OO(n)$,
\item for each $n \in \bZ$, the full subcategory $\Gen(\OO(n))$,
\item the category $\coh \bP^1$ is a narrow subcategory of itself.
\end{enumerate}
\end{proposition}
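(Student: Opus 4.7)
The proof splits into checking that each listed subcategory is narrow and checking that every narrow subcategory appears in the list. The forward direction is by inspection: each listed type is manifestly closed under extensions and cokernels, with $\Gen(\OO(n))$ best handled via its identification with $\{X \in \coh\bP^1 : \Hom(X,\OO(m))=0 \text{ for all } m < n\}$, which makes closure under extensions and cokernels immediate by a $\Hom$-argument. For the converse, the plan is to combine Corollary \ref{corollary:NarrowIsPretorsion} and Proposition \ref{proposition:ProjectiveLineWide}: every narrow $\NN \subseteq \coh\bP^1$ is a tilting nullity class in its wide closure $\WW = \wide\NN$, and $\WW$ must be one of the four types listed there. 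So I would enumerate, for each such $\WW$, the tilting nullity classes $\TT \subseteq \WW$ satisfying $\wide_{\coh\bP^1}\TT = \WW$.

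The first three cases of wide subcategories are quick. If $\WW = 0$, then $\TT = 0$ (type (1)). If $\WW$ is the subcategory of torsion sheaves supported on some nonempty $P \subseteq \bP^1$, the tilting condition applied to $k(p)$ yields a monomorphism $k(p) \hookrightarrow T \in \TT$; projecting $T$ to its direct summand supported at $p$ gives a quotient (hence an object of $\TT$) with $k(p)$ as a simple quotient, so $k(p) \in \TT$; closure under extensions then produces all of $\WW$, giving type (2). If $\WW$ is the additive subcategory generated by $\OO(n)$, the tilting condition applied to $\OO(n)$ exhibits it as a direct summand of some $T \in \TT$, hence as a quotient, hence $\OO(n) \in \TT$; closure under extensions then gives $\TT = \WW$ (type (3)).

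The main case is $\WW = \coh\bP^1$. Here I would first show that $\TT$ contains every $k(p)$ and some $\OO(m)$ with $m \geq 0$: applying tilting to $\OO$ yields $\OO \hookrightarrow T \in \TT$; since $\OO$ is torsion-free the image lands in the vector bundle part of $T$, and projecting to a suitable summand produces a nonzero map $\OO \to \OO(m)$, forcing $m \geq 0$; this $\OO(m)$ is a direct summand of $T$, hence a quotient, hence lies in $\TT$. The natural surjections $\OO(m) \twoheadrightarrow k(p)$ then place each $k(p)$ in $\TT$, and extensions yield all torsion. The short exact sequences $0 \to \OO(m') \to \OO(m'+1) \to k(p) \to 0$ then inductively place $\OO(m') \in \TT$ for all $m' \geq m$. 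Setting $n = \inf\{m \in \bZ : \OO(m) \in \TT\} \in \bZ \cup \{-\infty\}$, if $n = -\infty$ then all line bundles and all torsion sheaves lie in $\TT$, so by Grothendieck's splitting of vector bundles on $\bP^1$ one gets $\TT = \coh\bP^1$ (type (5)). Otherwise $n$ is finite, and I claim $\TT = \Gen(\OO(n))$: the inclusion $\Gen(\OO(n)) \subseteq \TT$ is immediate from $\OO(n) \in \TT$ together with closure under extensions and quotients, while for the reverse, an $X \in \TT$ decomposes as $V \oplus T_X$ with $V = \bigoplus_i \OO(m_i)$, and each $\OO(m_i)$ is a direct summand of $X$ (hence a quotient), hence in $\TT$, forcing $m_i \geq n$ by the minimality of $n$ (type (4)). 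The only delicate ingredient is the characterization of $\Gen(\OO(n))$ in terms of degrees of line bundle summands, which I would establish separately via Grothendieck's splitting theorem together with $\Hom(\OO(a),\OO(b)) = H^0(\OO(b-a))$, showing that $\OO(a)$ is a quotient of some $\OO(n)^{\oplus l}$ precisely when $a \geq n$ and that torsion sheaves are quotients of $\OO(n)$ for every $n$.
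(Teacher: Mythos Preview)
Your proposal is correct and follows essentially the same approach as the paper: invoke Corollary \ref{corollary:NarrowIsPretorsion} to reduce to enumerating tilting nullity classes inside each of the wide subcategories from Proposition \ref{proposition:ProjectiveLineWide}, dispatch the first three types as having a unique such class, and in the main case $\WW = \coh\bP^1$ locate a line bundle in $\TT$, pick up all torsion, and take the minimal twist $n$ to identify $\TT$ with $\Gen(\OO(n))$ or $\coh\bP^1$. The paper's argument is the same in outline but terser (it simply asserts that $\TT$ must contain all torsion sheaves and at least one line bundle), and it does not separately verify the forward direction.

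One small imprecision worth fixing: when you say that the image of the monomorphism $\OO \hookrightarrow T$ ``lands in the vector bundle part of $T$'', this is not literally true in general (take $T = \OO \oplus k(p)$ with the map $(\id, \mathrm{ev}_p)$). What your argument actually needs, and what follows immediately from $\OO$ being torsion-free, is only that the \emph{projection} of $\OO$ to the vector bundle summand $V$ of $T$ is nonzero; this is enough to produce a nonzero map $\OO \to \OO(m)$ to some line bundle summand of $T$ and proceed as you do.
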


Next we will discuss all narrow sequences in $\coh \bP^1$ and hence (see Theorem \ref{theorem:NarrowPreaisle}) all homology-determined preaisles in $\Db \coh \bP^1$.  We start our discussion with a lemma.

\begin{lemma}\label{lemma:ProjectiveLineNarrowSequence}
Let $(\NN(k))_k$ be a narrow sequence in $\coh \bP^1$.  If $\NN(k)$ contains a nonzero torsion sheaf, and $\NN(k+1)$ contains a nonzero vector bundle, then $\NN(k+1) = \coh \bP^1$. 
\end{lemma}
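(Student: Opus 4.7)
The plan is to combine the classification of narrow subcategories of $\coh\bP^{1}$ in Proposition \ref{proposition:ProjectiveLineNarrow} with a single application of the narrow-sequence condition.  First I would observe that narrow subcategories are closed under direct summands: given $X\oplus Y\in\NN$, the idempotent endomorphism projecting onto $X$ is a morphism in $\NN$ whose cokernel is $Y$, and narrow subcategories are closed under such cokernels.  By Grothendieck's splitting theorem, the hypothesis that $\NN(k+1)$ contains a nonzero vector bundle then yields $\OO(\ell)\in\NN(k+1)$ for some $\ell\in\bZ$.  Moreover, $\NN(k+1)\supseteq\NN(k)$ contains a nonzero torsion sheaf, and of the three types of narrow subcategories in Proposition \ref{proposition:ProjectiveLineNarrow} that can contain a nonzero torsion sheaf — types $(2)$, $(4)$, and $(5)$ — each one automatically contains $k(p)$ for at least one closed point $p$.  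So there is a closed point $p\in\bP^{1}$ with $k(p)\in\NN(k)$.

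Re-inspecting Proposition \ref{proposition:ProjectiveLineNarrow}, the only narrow subcategories of $\coh\bP^{1}$ that contain both a line bundle and a nonzero torsion sheaf are $\Gen(\OO(m))$ for some $m\in\bZ$ and $\coh\bP^{1}$ itself, so it suffices to rule out the former possibility.  Suppose for contradiction $\NN(k+1)=\Gen(\OO(\ell))$ (where I may assume, after possibly replacing $\ell$, that $\OO(\ell)\in\NN(k+1)$).  The key step is to apply the narrow-sequence condition at index $k+1$ — namely, the implication that $A\in\NN(k+2)$, $B,D\in\NN(k+1)$, $E\in\NN(k)$ in an exact sequence $A\to B\to C\to D\to E$ forces $C\in\NN(k+1)$ — to the $5$-term exact sequence
\[
0\longrightarrow 0\longrightarrow\OO(\ell-1)\longrightarrow\OO(\ell)\longrightarrow k(p),
\]
obtained by prepending two zero terms to the standard short exact sequence $0\to\OO(\ell-1)\to\OO(\ell)\to k(p)\to 0$.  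The hypotheses on $A,B,D,E$ are plainly satisfied, so the condition yields $\OO(\ell-1)\in\NN(k+1)=\Gen(\OO(\ell))$.  Since $\Hom(\OO(\ell),\OO(\ell-1))=0$, the line bundle $\OO(\ell-1)$ cannot lie in $\Gen(\OO(\ell))$, giving the desired contradiction.

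The only slightly delicate point is spotting that padding a $3$-term short exact sequence with zero objects on the left produces a $5$-term exact sequence of exactly the shape required by Definition \ref{definition:NarrowSequence}(2); once this trick is in place, the rest of the argument is a routine chase through the classification of narrow subcategories on $\coh\bP^{1}$.
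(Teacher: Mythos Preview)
Your proof is correct and shares the same key idea as the paper's: pad the short exact sequence $0\to\OO(\ell-1)\to\OO(\ell)\to k(p)\to 0$ with zeros and apply the narrow-sequence axiom at index $k+1$ to force $\OO(\ell-1)\in\NN(k+1)$. The difference is in how each argument finishes. The paper does not invoke Proposition~\ref{proposition:ProjectiveLineNarrow} at all: it simply iterates the step to obtain $\OO(m)\in\NN(k+1)$ for every $m\leq \ell$, and then observes that a narrow subcategory containing all $\OO(m)$ with $m\leq\ell$ must be the whole of $\coh\bP^{1}$ (since everything is a cokernel of a map between such sums). You instead use the classification twice---once to locate a simple $k(p)\in\NN(k)$, and once to reduce to the single case $\NN(k+1)=\Gen(\OO(\ell))$---so that one application of the axiom already produces a contradiction. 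Your route is slightly slicker once the classification is in hand, but it makes the lemma logically dependent on Proposition~\ref{proposition:ProjectiveLineNarrow}; the paper's argument is self-contained. A small stylistic point: your parenthetical ``after possibly replacing $\ell$'' is confusing, since if $\NN(k+1)=\Gen(\OO(\ell))$ then $\OO(\ell)\in\NN(k+1)$ automatically; what you really mean is that you take $\ell$ to be the generating index of $\Gen$, not the index produced earlier by Grothendieck splitting.
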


\begin{proof}
If $\NN(k)$ contains a nonzero torsion sheaf, then it contains a simple torsion sheaf $S$.  For any $\OO(n) \in \NN(k+1)$ we have a short exact sequence
$$0 \to \OO(n-1) \to \OO(n) \to S \to 0$$
so that Definition \ref{definition:NarrowSequence} implies that $\OO(n-1) \in \NN(k+1)$.  We then see that $\OO(m) \in \NN(k+1)$ for all $m \leq n$.  Since $\NN(k+1)$ is closed under extensions and cokernels, this implies that $\NN(k+1) = \coh \bP^1$.
\end{proof}

Thus, the narrow sequences fall into three types (here, the types of narrow subcategories refer to the numbering given in Proposition \ref{proposition:ProjectiveLineNarrow}).

\newcounter{MyCounter}
\begin{list}{\Roman{MyCounter}.}{\usecounter{MyCounter}}
\item Firstly, assume that $(\NN(k))_k$ does contain a narrow subcategory of type (2).  In this case, Lemma \ref{lemma:ProjectiveLineNarrowSequence} shows that the only possibilities for each $(\NN(k))_k$ are of the form (1) - (2) - (5).  Note that $\NN(k)$ does not need to contain all of these types, but as $k$ increases the type cannot go down (this is because $(\NN(k))_k$ is an increasing sequence).  Furthermore, the supports of the narrow subcategories in $\NN(k)$ of type (2) have to be nondecreasing.

Thus such a narrow sequence is given by $-\infty \leq l_1 < l_2 \leq +\infty$ where $l_1,l_2 \in \bZ$, and a nondecreasing set $\{P_k \subseteq \bP^1 \}_{l_1 \leq k < l_2}$ so that
$$\NN(k) = \begin{cases} 
0 & k < l_1 \\
\mbox{torsion sheaves supported at $P_k$} & l_1 \leq k < l_2 \\
\coh \bP^1 & l_2 \leq k
\end{cases}$$
It is straightforward to check that all of these sequences $(\NN(k))_k$ are indeed narrow sequences.

\item Secondly, we will now assume that the narrow sequence $(\NN(k))_k$ does contain a narrow subcategory of type (3).  In this case, the only possibilities for the narrow subcategories are the types (1) - (3) - (4) - (5).  Similarly as before, $(\NN(k))_k$ does not need to contain all of these types, but as $k$ increases the type cannot go down.  We have the following extra conditions:
\begin{itemize}
\item all narrow subcategories of type (3) are the same (and there is at least one),
\item it follows from Proposition \ref{proposition:GeneratedInOneStep} that there is at most one subcategory of type (4),
\item and if there is a subcategory of type (5) then there is exactly one of type (4) which is given by all quotient objects of the narrow subcategory of type (3).  To see this, assume that $\NN(k-1)$ consists of direct sums of $\OO(n)$ and $\NN(k+1) = \coh \bP^1$.  Since $\NN(k-1) \subseteq \NN(k)$, we know that $\OO(n) \in \NN(k)$.  In $\coh \bP^1$, there is s a short exact sequence
$$0 \to \OO(n-1) \to \OO(n) \to S \to 0$$
where $S$ is any simple sheaf in $\coh \bP^1$.  Since $\OO(n-1) \in \NN(k+1)$, the exact sequence in Definition \ref{definition:NarrowSequence} shows that $S \in \NN(k)$.  Thus $\NN(k)$ is either of type (4) or of type (5).  If $\NN(k)$ were of type (5), then the same reasoning would show that $\NN(k-1)$ would be of type (4) or type (5).  Since we have assumed that $\NN(k-1)$ is of type (3), we may conclude that $\NN(k)$ is of type (4).  We then see that $\OO(n) \in \NN(k)$ and $\OO(n-1) \not\in \NN(k)$ so that $\NN(k) = \Gen(\OO(n))$.
\end{itemize}

Here, one can list the narrow sequences by $-\infty \leq l_1 < l_2 \leq +\infty$ where $l_1,l_2 \in \bZ$, and line bundle $\OO(n) \in \coh \bP^1$.  The corresponding narrow sequence is then given by
$$\NN(k) = \begin{cases} 
0 & k < l_1 \\
\langle \OO(n) \rangle & l_1 \leq k < l_2 \\
\Gen(\OO(n)) & k = l_2 \\
\coh \bP^1 & l_2 < k
\end{cases}$$
Again, it is straightforward to check that all of these sequences $(\NN(k))_k$ are indeed narrow sequences.

\item Thirdly, there are those which contain a narrow subcategory of type (4), but none of type (2) or (3); these sequences are thus of type (1) - (4) - (5).  We can describe them explicitly by
$$\NN(k) = \begin{cases} 
0 & k < l \\
\Gen( \OO(n) ) & k=l \\
\coh \bP^1 & l < k
\end{cases}$$
for some $n \in \bZ$ and $l \in \bZ$.

\item Lastly, there are the narrow sequences of type (1) - (5).  These are given by
$$\NN(k) = \begin{cases} 
0 & k < l \\
\coh \bP^1 & l \leq k
\end{cases}$$
where $l \in \bZ \cup \{\pm \infty\}$.  If $l=0$, then this is the standard aisle; if $l = -\infty$ or $l=\infty$, then the associated preaisle is given by $0$ and $\Db \coh \bP^1$, respectively.
\end{list}
We have shown the following proposition.

\begin{proposition}\label{proposition:ProjectiveLineNarrowSequence}
All narrow sequences are of the four forms described above.
\end{proposition}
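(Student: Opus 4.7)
The plan is to combine the classification of narrow subcategories in $\coh \bP^1$ (Proposition \ref{proposition:ProjectiveLineNarrow} and Corollary \ref{corollary:NarrowIsPretorsion}) with the two defining conditions of a narrow sequence to constrain, one index at a time, which types can occur. Every $\NN(k)$ is one of the five types (1)--(5) from Proposition \ref{proposition:ProjectiveLineNarrow}. I first record the inclusion relations among these types: type (1) is contained in everything; type (2)'s are nested exactly when the supports are nested; the subcategories $\langle \OO(n)\rangle$ and $\langle \OO(m)\rangle$ are pairwise incomparable for $n\ne m$; $\langle \OO(n)\rangle \subsetneq \Gen(\OO(n)) \subsetneq \coh\bP^1$; and a torsion subcategory is never contained in a line-bundle subcategory (and vice versa). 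The monotonicity $\NN(k)\subseteq \NN(k+1)$ together with these incomparabilities forces the types to evolve along a single strand.

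Next, I split into the four cases advertised. Case I (some $\NN(k)$ is of type (2)): the supports are nondecreasing by the monotonicity condition. By Lemma \ref{lemma:ProjectiveLineNarrowSequence}, the next time one leaves type (2) one must land in type (5) and not in types (3) or (4); this yields the two-threshold $(l_1,l_2)$ description with a chain $\{P_k\}$ of supports. Case II (some $\NN(k)$ is of type (3) but no $\NN(k)$ is of type (2)): monotonicity together with the incomparability of distinct $\langle \OO(n)\rangle$ forces a single $n$ throughout the type-(3) segment. For the transition from type (3) to type (5), I use the short exact sequence $0\to \OO(n-1)\to \OO(n)\to S\to 0$ placed into the 5-term condition (2) of Definition \ref{definition:NarrowSequence} with $A=\OO(n-1)\in \NN(k+1)$, $B=\OO(n)\in \NN(k)$, $D=S$, $E=0$; this shows $S\in \NN(k)$ whenever $\NN(k+1)=\coh\bP^1$, which precludes $\NN(k)$ itself being of type (3) at the last level. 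A brief iteration of this argument (exactly as sketched in the paper's bullet on Case II) shows the unique intermediate level is of type (4), forced to be $\Gen(\OO(n))$ with the same $n$.

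Case III (type (4) present, none of type (2) or (3)): the same 5-term argument, together with the fact that $\Gen(\OO(n))$ is not contained in any other $\Gen(\OO(m))$ for $m\ne n$, forces exactly one level $k=l$ where $\NN(l)=\Gen(\OO(n))$, preceded by $0$ and followed by $\coh\bP^1$. Case IV (only types (1) and (5)) is immediate. Finally, I would verify the converse: each listed sequence satisfies both (1) (monotonicity, which is clear) and (2) (the 5-term exactness condition), which reduces to a short routine check using only that each $\NN(k)$ is narrow and that the supports in Case I are nondecreasing.

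The main obstacle is Case II, specifically the interaction of types (3), (4), and (5): one must show that the jump from $\langle \OO(n)\rangle$ to $\coh\bP^1$ cannot happen in a single step and that the mandatory intermediate level is precisely $\Gen(\OO(n))$ (not $\Gen(\OO(m))$ for some other $m$, nor type (5) itself). This is handled by the 5-term exactness condition applied to $0 \to \OO(n-1) \to \OO(n) \to S \to 0$ for varying simple torsion sheaves $S$, from which one extracts that every simple $S$ lies in the intermediate level, and then that no $\OO(n-1)$ does, pinning the intermediate level to $\Gen(\OO(n))$ with the same $n$ that appeared in the type-(3) stage.
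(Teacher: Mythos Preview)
Your overall strategy matches the paper's: the discussion preceding the proposition \emph{is} the proof, and you reproduce its case split, the use of Lemma \ref{lemma:ProjectiveLineNarrowSequence}, and the exploitation of the short exact sequence $0 \to \OO(n-1) \to \OO(n) \to S \to 0$ via the 5-term condition. However, there is a genuine error in your handling of type (4).

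You assert that ``$\Gen(\OO(n))$ is not contained in any other $\Gen(\OO(m))$ for $m \ne n$.'' This is false: the categories $\Gen(\OO(n))$ are totally ordered by inclusion, with $\Gen(\OO(n)) \subsetneq \Gen(\OO(m))$ whenever $m < n$ (indeed $\OO(n)$ is a quotient of $\OO(m)^{\oplus 2}$). So monotonicity alone does not exclude a narrow sequence with $\NN(k) = \Gen(\OO(n))$ and $\NN(k+1) = \Gen(\OO(n-1))$, and your Case III argument collapses. The same gap appears in Case II: your ``brief iteration'' of the 5-term argument shows that the level immediately below a type-(5) level cannot be of type (3), but it does not by itself show there is only \emph{one} type-(4) level.

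The fix, which is what the paper does, is to observe that $\wide(\Gen(\OO(n))) = \coh \bP^1$: the wide closure contains all torsion sheaves and $\OO(n)$, hence via kernels of maps $\OO(n) \to T$ also every $\OO(m)$ with $m < n$. Then Corollary \ref{corollary:GrowingFastEnough} forces $\NN(k+1) \supseteq \wide(\NN(k)) = \coh \bP^1$ whenever $\NN(k)$ is of type (4). This single observation simultaneously gives the uniqueness of the type-(4) level in Case II and the one-level structure in Case III. With the incomparability claim replaced by this wide-closure argument, the remainder of your proof is correct and coincides with the paper's.
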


Having described the homology-determined preaisles, we will now check which ones of these are aisles.  

\begin{proposition}\label{proposition:AislesProjectiveLine}
All narrow sequences in Proposition \ref{proposition:ProjectiveLineNarrowSequence} come from aisles, except those of type (1) - (2) - (5) where we need the extra condition that there is only one narrow sequence of type (2).
\end{proposition}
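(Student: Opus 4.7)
The plan is to examine each of the four families of narrow sequences in Proposition~\ref{proposition:ProjectiveLineNarrowSequence} and to check whether the associated homology-determined preaisle $\UU = \theta((\NN(k))_k)$ admits a right adjoint to its inclusion into $\Db \coh \bP^1$.

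For Type~IV the preaisle $\UU$ is a shifted standard aisle (with the two trivial preaisles $\{0\}$ and $\Db \coh \bP^1$ as boundary cases), so it is an aisle by construction. For Type~III, $\UU$ is the shifted HRS tilt at the torsion class $\Gen(\OO(n)) \subseteq \coh \bP^1$, hence an aisle by Happel (Section~\ref{subsection:Torsion}). For Type~I satisfying the extra condition (a single appearance of the type-(2) subcategory), the preaisle is once again a shifted HRS tilt, this time at the torsion class $\text{tors at }P$, and Happel's theorem applies unchanged.

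Type~II is handled by a direct construction of the right adjoint, exploiting that $\OO(n)$ is an exceptional object of $\coh \bP^1$. For any $X \in \Db \coh \bP^1$, write $X = \bigoplus H_k(X)[k]$ and build $X_\UU$ degree-by-degree: apply the coreflection $\coh \bP^1 \to \Gen(\OO(n))$ in degrees $\geq l_2$ and the coreflection $\coh \bP^1 \to \langle \OO(n)\rangle$ (take the maximal $\OO(n)$-summand) in degrees $[l_1, l_2-1]$. The vanishings $\Ext^1_{\coh \bP^1}(\OO(n), F) = 0$ for $F \in \Gen(\OO(n))$ and $\Ext^1_{\coh \bP^1}(\OO(n), \OO(n)) = 0$ ensure that there are no off-diagonal Ext contributions forcing $X_\UU$ to be larger; the universal property is then verified by computing $\Hom(U_k[k], X_\UU)$ for each stalk $U_k[k]$ with $U_k \in \NN(k)$ and matching against $\Hom(U_k[k], X)$ term-by-term using the hereditary grading.

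The remaining task is to show that Type~I sequences failing the extra condition do not give aisles. The strategy is to exhibit a single test object that rules out any right-adjoint approximation in $\UU$. Fix a point $p$ in the type-(2) support and any line bundle $\OO(n)$, and set $X = \OO(n)[l_1+1]$. Using that $\coh \bP^1$ is hereditary and that $k(p)[l_1] \in \UU$, Serre duality on $\bP^1$ yields $\Hom(k(p)[l_1], X) = \Ext^1_{\coh \bP^1}(k(p), \OO(n)) = k$. On the other hand $X \in \UU[1]^{\perp_0}$: for $V \in \UU[1]$ the only possibly nonzero contributions to $\Hom(V,X)$ come from the $\Hom$ and $\Ext^1$ terms in degrees $l_1$ and $l_1+1$, but these vanish because $\NN(l_1-1) = 0$ and $\Hom(\text{tors at }P, \OO(n)) = 0$. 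These three facts together contradict the $t$-structure axiom $\Hom(D^{\leq 0}, D^{\geq 1}) = 0$ from Section~\ref{subsection:tStructures}, so $\UU$ cannot be an aisle. The main obstacle is Type~II, where verifying that the degree-by-degree coreflection genuinely computes $X_\UU$ in the derived category (rather than producing a strictly smaller object) requires a careful bookkeeping of the nontrivial $\Hom$-interactions between $\langle \OO(n)\rangle$ in the ``window'' degrees and $\Gen(\OO(n))$ at the boundary degree $l_2$.
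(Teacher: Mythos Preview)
Your argument for the ``not an aisle'' direction is broken. You verify that $k(p)[l_1]\in\UU$, that $X=\OO(n)[l_1+1]\in\UU[1]^{\perp_0}$, and that $\Hom(k(p)[l_1],X)\neq 0$, and then invoke the axiom $\Hom(D^{\leq 0},D^{\geq 1})=0$. But in the paper's conventions $\UU[1]^{\perp_0}=D^{\geq 0}$, not $D^{\geq 1}$; having a nonzero morphism from $D^{\leq 0}$ to $D^{\geq 0}$ is perfectly compatible with a $t$-structure (such maps exist in the heart). Worse, your three facts hold verbatim even when there is only \emph{one} type-(2) term, a case you have already accepted as an aisle; so your argument cannot be separating the two situations. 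The paper argues instead through representability: if $\UU$ were coreflective, then $\Ext^1(-,\OO(m))|_{\NN(k)}$ would be representable in the abelian category $\NN(k)$ of torsion sheaves, and being nonzero and right exact it would have to be represented by a nonzero injective there; this is impossible because every nonzero torsion sheaf has self-extensions.

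Your Type~II argument is also not right as stated. The hereditary decomposition of $\Hom(U,X)$ has cross terms $\Ext^1(U_k,X_{k-1})$, so a coreflection cannot be built one homology degree at a time from the abelian coreflections alone. For example at degree $l_1$ one needs $\Hom(\OO(n),Y_{l_1})\cong\Hom(\OO(n),X_{l_1})\oplus\Ext^1(\OO(n),X_{l_1-1})$ because $Y_{l_1-1}=0$; taking $Y_{l_1}$ to be the abelian coreflection of $X_{l_1}$ misses the second summand. (Incidentally, that coreflection onto $\langle\OO(n)\rangle$ is $\Hom(\OO(n),-)\otimes\OO(n)$, not the ``maximal $\OO(n)$-summand'': for $\OO(n+1)$ it is $\OO(n)^{\oplus 2}$, not $0$.) The paper bypasses this bookkeeping entirely by writing $\UU=\UU_1\ast\UU_2$ with $\UU_2$ an HRS-tilt aisle and $\UU_1$ an aisle through the exceptional thick subcategory $\thick\OO(n)\simeq\Db\mod k$, and then applying Proposition~\ref{proposition:Chen}.
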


\begin{proof}
We start with the list of narrow sequences in Proposition \ref{proposition:ProjectiveLineNarrowSequence}.  Let $(\NN(k))_k$ be a narrow sequence such that $\theta((\NN(k))_k)$ is an aisle.  We have the following observation: if $\NN(k)$ is of type (2), then $\NN(k+1)$ is of type (5).

Indeed, the list from Proposition \ref{proposition:ProjectiveLineNarrowSequence}, we know that $\NN(k+1)$ is of the form (2) or (5).  If it is of the form (5), then we are done.  We may thus assume that $\NN(k)$ is of the form (2), and in particular it is an abelian subcategory.  Also, $\Hom(\NN(k+1),\OO) = 0$ but $\Ext(\NN(k),\OO) \not= 0$.

Let $\UU = \theta((\NN(k))_k)$ be the associated preaisle.  Assume that $\UU$ is an aisle, thus $\UU \subseteq \coh \bP^1$ is coreflective.  This means that the functor $\Hom(-,\OO(m)[k+1])|_\UU$ is representable and thus that the functor $\Ext(-,\OO(m))|_{\NN(k)}$ is representable (note that $\NN(k)$ is abelian).  However, this last functor is nonzero and right exact and thus needs to be represented by a nonzero injective object in $\NN(k)$.  But $\NN(k)$ has no such objects (since $\Ext(X,X) \not=0$ for all objects $X \in \NN(k)$), which is a contradiction.

To prove the proposition, we need to show that the given possibilities do correspond to aisles. For all types, except type II, this is clear as they are either trivial or induced by a torsion theory.  Thus let $(\NN(k))_k$ be a nontrivial narrow sequence of type II and let $\UU = \theta((\NN(k))_k)$ be the associated preaisle.  Using notation as before, define narrow sequences
$$\NN_1(k) = \begin{cases} 
0 & k < l_1 \\
\langle \OO(n) \rangle & l_1 \leq k ,
\end{cases}$$
and
$$\NN_2(k) = \begin{cases} 
0 & k < l_2 \\
\Gen(\OO(n)) & k = l_2 \\
\coh \bP^1 & l_2 < k
\end{cases}$$
where $-\infty \leq l_1 < l_2 < \infty$.  We write $\UU_1 = \theta((\NN_1(k))_k)$ and $\UU_2 = \theta((\NN_2(k))_k)$.  It is clear that $\UU = \UU_1 \ast \UU_2$.  Here, $\UU$ is a preaisle (since it corresponds to a narrow sequence) and $\UU_2$ is an aisle (it is an aisle induced by a torsion theory on $\coh \bP^1$), so we need only to show that $\UU_1$ is an aisle in $\Db \coh \bP^1$ to apply Proposition \ref{proposition:Chen} and conclude that $\UU$ is an aisle.

We can factor the embedding $\UU_1 \to \Db \coh \bP^1$ as $\UU_1 \to \thick \UU_1 \to \Db \coh \bP^1$.  Note that $\UU_1$ is an aisle in $\thick \UU_1 \cong \mod k$ and that $\thick \UU_1 \to \Db \coh \bP^1$ has a right adjoint (given by $X \mapsto \RHom(\OO(n)[0],X) \otimes \OO(n)[0]$ (alternatively one can use that $\thick \UU_1 \cong \mod k$ is saturated in the sense of  \cite{BondalKapranov89}) and hence the composition $\UU_1 \to \thick \UU_1 \to \Db \coh \bP^1$ has a right adjoint.  This finishes the proof.
\end{proof}

\begin{corollary}\label{corollary:AislesProjectiveLine}
The aisles in $\Db \coh \bP^1$ are either
\begin{enumerate}
\item $0$ or $\Db \coh \bP^1$,
\item induced by a torsion theory, or
\item of type II.
\end{enumerate}
\end{corollary}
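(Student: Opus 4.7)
The plan is to derive the corollary as an immediate bookkeeping consequence of Proposition \ref{proposition:AislesProjectiveLine}, via the bijection of Theorem \ref{theorem:NarrowPreaisle} between narrow sequences and homology-determined preaisles. By Proposition \ref{proposition:RetractsCoproducts} every aisle in $\Db \coh \bP^1$ is a homology-determined preaisle, so Theorem \ref{theorem:NarrowPreaisle} identifies aisles with a distinguished subset of narrow sequences. Proposition \ref{proposition:ProjectiveLineNarrowSequence} sorts all narrow sequences into the four families labelled I, II, III, IV, and Proposition \ref{proposition:AislesProjectiveLine} (together with the observation ``if $\NN(k)$ is of type (2), then $\NN(k+1)$ is of type (5)'' made at the start of its proof) picks out those that come from aisles: Type I with a single $P$ and $l_2 = l_1 + 1$, together with all of Types II, III, and IV.

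It then remains to match each surviving family to one of the three cases of the corollary. Type IV with $l = \pm\infty$ produces the trivial aisles $0$ and $\Db \coh \bP^1$, giving case (1). In each of Type IV with $l \in \bZ$, Type I with the restrictions above, and Type III, the narrow sequence has a unique non-trivial position $l$ at which $\NN(l)$ is a torsion class in $\coh \bP^1$ (respectively $\coh \bP^1$ itself, the torsion sheaves supported on $P$, or $\Gen(\OO(n))$), with $\NN(k) = 0$ for $k < l$ and $\NN(k) = \coh \bP^1$ for $k > l$; comparing with the construction in Section \ref{subsection:Torsion}, these are exactly the aisles induced by a torsion theory (interpreted up to suspension), and so they together produce case (2). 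Type II sequences form case (3) by definition.

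The substantive work has already been done in Proposition \ref{proposition:AislesProjectiveLine}; the only mild subtlety is noting that Type I sequences producing aisles must in fact satisfy $l_2 = l_1 + 1$, which is recorded in the opening observation of that proof. Given these inputs, the corollary is a pure reorganization, with no genuine technical obstacle beyond what has already been handled.
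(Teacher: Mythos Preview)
Your proposal is correct and matches the paper's approach: the corollary is stated without proof immediately after Proposition \ref{proposition:AislesProjectiveLine}, as an immediate reorganization of that result. Your explicit matching of the surviving narrow-sequence types (restricted I, III, IV against case (2); II against case (3); IV with $l = \pm\infty$ against case (1)) and your observation that the Type I constraint forces $l_2 = l_1 + 1$ are exactly the bookkeeping the paper leaves implicit.
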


\begin{remark}
It is well-known that $\coh \bP^1$ is derived equivalent to the category $\rep Q$ of finite dimensional representations of the Kronecker quiver $Q$.  Hence the set of $t$-structures coincides.  In a subsequent paper, we will use Theorem \ref{theorem:Reduced} below to give another description of the set of $t$-structures on $\coh \bP^1$.
\end{remark}
\section{Some operations on aisles}\label{section:Aisles}

In this section, we have gathered the more technical results which will be used in the rest of this paper.  We will show how, in our setting, one can break up aisles and recover the original aisle.  Section \ref{section:CoreflectiveNarrowSequence} below will only use Corollary \ref{corollary:Pasting}.  The rest of the section will be used in Section \ref{section:LeftAdjoint}.

In this section, let $\AA$ be any hereditary abelian category.  All aisles and preaisles will be in $\Db \AA$.

\begin{definition}\label{definition:Restrictions}
Let $\UU$ be a homology-determined preaisle.  Recall that $D^{\leq n} = D^{\leq 0} [-n]$ where $D^{\leq 0}$ is the standard aisle. We define 
\begin{eqnarray*}
\UU |_{-\infty}^{+\infty} &=& \UU, \\
\UU |_{k}^{+\infty} &=& \UU \cap D^{\leq -k} \\
\UU |_{-\infty}^l &=& \UU \cap \thick H_l \UU, \\
\UU |_k^l &=& \UU |_{k}^{+\infty} \cap \UU |_{-\infty}^l.
\end{eqnarray*}
\end{definition}

Put differently, we have
\begin{eqnarray*}
\UU |_{k}^{+\infty} &=& \{X \in \UU | \forall i < k: H_i X = 0 \}, \\
\UU |_{-\infty}^l &=& \{X \in \UU | \forall i \in \bZ: H_i X \in \wide \UU_l \}.
\end{eqnarray*}

The following proposition shows that a preaisle which is ``patched together'' from an increasing sequence of subaisles, is an aisle itself.

Let $X \in \Db \AA$.  In what follows, we will write $X_{[a,b]}$ for $\tau^{\geq -b} \tau^{\leq -a} X$.  Note that $H_k (X_{[a,b]}) \not= 0$ implies that $k \in [a,b]$.  Similarly, for any full subcategory $\CC \subseteq \Db \AA$ we will write $\CC_{[a,b]}$ for the essential image of $\CC$ under $\tau^{\geq -b} \tau^{\leq -a}$.

\begin{proposition}\label{proposition:UnionOfAisles}
Let $\AA$ be an abelian category of finite global dimension.  Let $(\UU(n))_{n \in \bZ}$ be a sequence of aisles with the following property: for each $k \in \bZ$, we have $(\UU(n))_{[-k,k]} = (\UU(n+1))_{[-k,k]}$ for all $n \gg 0$.  We have
\begin{enumerate}
\item if the sequence $(\UU(n))_{n \in \bZ}$ is decreasing (thus $\UU(n) \supseteq \UU(n+1)$ for all $n \in \bZ$), then $\bigcap_{n \in \bZ} \UU(n)$ is an aisle, and
\item if the sequence $(\UU(n))_{n \in \bZ}$ is increasing (thus $\UU(n) \subseteq \UU(n+1)$ for all $n \in \bZ$), then $\bigcup_{n \in \bZ} \UU(n)$ is an aisle.
\end{enumerate}
\end{proposition}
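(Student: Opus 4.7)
The plan is to verify, for each of the two claims, the hypotheses of Proposition \ref{proposition:Neeman}: since $\Db \AA$ has split idempotents, it suffices to show that the candidate subcategory $\VV$ (the intersection $\bigcap_n \UU(n)$ or the union $\bigcup_n \UU(n)$) is a preaisle, is closed under retracts, and is contravariantly finite in $\Db \AA$. The preaisle axioms and closure under retracts are routine. Closure under suspension passes to both intersections and unions; closure under extensions is immediate for the intersection, and for the increasing union one uses that a triangle witnessing an extension involves only three objects, which then all lie in a common $\UU(N)$ for $N$ large. Retract-closure passes through intersections and unions from the retract-closure of each $\UU(n)$ (Lemma \ref{lemma:AdjointRetract}).

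The substantive step is contravariant finiteness. Given $X \in \Db \AA$, choose $k$ with $X \in D^{[-k,k]}$, and set $d = \gldim \AA$. The candidate approximation is $U = X_{\UU(N)}$ for $N$ sufficiently large in the union case, and the eventual common value of the tower $\cdots \to X_{\UU(n+1)} \to X_{\UU(n)} \to \cdots \to X$ (whose transition maps exist by universal property) in the intersection case. The heart of the argument is to show that, for $n$ large, the transition maps $X_{\UU(n)} \to X_{\UU(n+1)}$ (union) or $X_{\UU(n+1)} \to X_{\UU(n)}$ (intersection) are isomorphisms, so that a stable value exists and lies in $\VV$. Granted this stabilization, the universal property for the individual $\UU(n)$ propagates to the union/intersection: any $V \in \VV$ is bounded, so $V \in \UU(m)$ for some $m$, and after enlarging $m$ (union case) or $n$ (intersection case) a diagram chase with the stabilized truncations shows that $V \to X$ factors through $U$.

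The key technical input, and the main use of the finite-global-dimension hypothesis, is a uniform cohomological bound: there exists $K = K(X, d)$ such that $H_i(X_{\UU(n)}) = 0$ for $|i| > K$ and all $n \gg 0$. With this in hand, $X_{\UU(n)}$ equals its own $[-K,K]$-truncation and lies in $(\UU(n))_{[-K,K]}$; the stability hypothesis makes this latter subcategory constant for $n \gg 0$, so all the $X_{\UU(n)}$'s and transition maps live in a common full subcategory of $D^{[-K,K]}$, and a universal-property comparison forces the transition maps to stabilize to isomorphisms.

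The uniform cohomological bound is the main obstacle. My plan is to prove it by induction on the cohomological length of $X$, reducing via the canonical triangles for the standard $t$-structure to the case of a stalk $X = A[j]$. For stalks, I would analyze the approximation triangle $X_{\UU(n)} \to A[j] \to Y_n \to X_{\UU(n)}[1]$ via the long exact sequence: any nonzero $H_m(X_{\UU(n)})$ with $m$ outside a $d$-wide window around $j$ must, because $H_m(A[j]) = 0$, propagate to a nonzero $H_{m\pm1}(Y_n)$, and iterating this propagation using the vanishing of $\Ext^{>d}_\AA(-,-)$ produces a tail of nonzero cohomologies in $Y_n$, contradicting $Y_n \in \Db \AA$. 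The general case then follows by reducing to stalks via standard triangles and an octahedral argument. Without the finite global dimension assumption this propagation argument breaks down, which is consistent with the proposition relying essentially on that hypothesis.
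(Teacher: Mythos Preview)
Your overall strategy matches the paper's: both aim to show that the approximations $X_{\UU(n)}$ stabilise for $n$ large, and then use the stable value to represent $\Hom(-,X)$ restricted to the union (or intersection). Going through Proposition~\ref{proposition:Neeman} is harmless but unnecessary; the paper simply exhibits the right adjoint directly.

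The essential difference is in how the stabilisation is obtained. The paper does \emph{not} try to bound the cohomological amplitude of $X_{\UU(n)}$. Instead it uses a single observation: if $X\in D^{[-l,l]}$ and $d=\gldim\AA$, then for every $Y\in\Db\AA$ one has $\Hom(Y,X)\cong\Hom(Y_{[-l-d,\,l]},X)$, because $\Ext^{>d}_\AA=0$. Taking $k\ge l+d$, the functor $\Hom(-,X)|_{\UU(n)}$ therefore only sees the window $(\UU(n))_{[-k,k]}$, and this window is eventually constant by hypothesis. The stabilisation $X_{\UU(n)}\cong X_{\UU(n+1)}$ for $n\gg0$ is then read off from this, without ever needing to control where the homology of $X_{\UU(n)}$ lives. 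This is both shorter and avoids the technical lemma you propose.

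Your propagation argument for the uniform bound, as written, does not go through. From the triangle $X_{\UU(n)}\to A[j]\to Y_n$ the long exact sequence gives $H_m(X_{\UU(n)})\cong H_{m+1}(Y_n)$ for $m\notin\{j-1,j\}$; this merely says that the homologies of two \emph{already bounded} objects are shifts of one another away from degree $j$. No infinite tail is produced, and the vanishing of $\Ext^{>d}$ plays no visible role in the chain of implications you describe. In particular, nothing in that argument prevents an individual $X_{\UU(n)}$ from having homology far from $j$, which can indeed happen for a single aisle. If you want to salvage a uniform bound you would need a different mechanism; but the cleaner fix is to drop this step and use the paper's window isomorphism $\Hom(Y,X)\cong\Hom(Y_{[-l-d,l]},X)$ directly.
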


\begin{proof}
We will show that $\UU = \bigcup_{n \in \bZ} \UU(n)$ is an aisle when the sequence $\UU(n)$ is increasing.  The other statement is similar.  It is clear that $\bigcup_n \UU(n)$ is a preaisle (here we use that the sequence is increasing).  To show it is an aisle, we need to show that the embedding $\UU \to \Db \AA$ has a right adjoint.  Let $X \in \Db \AA$.  We wish to show that $\Hom(-,X)|_\UU$ is representable.

Let $d$ be the global dimension of $\AA$, and let $l \in \bN$ be such that $X_{[-l,l]} \cong X$.  For any object $Y \in \Db \AA$, we have that $\Hom(Y,X) \cong \Hom(Y_{[-l-d,l]}, X)$.  Let $k \geq l+d$.  Using that $(\UU(n))_{[-k,k]} = (\UU(n+1))_{[-k,k]}$ for $n \gg 0$, one can verify that $X_{\UU(n)} \cong X_{\UU(n+1)}$ for $n \gg 0$.  Hence $\Hom(-,X)|_\UU \cong \Hom(-,X_{\UU(n)})$.
\end{proof}

We are interested in the specific case where $\AA$ is hereditary (thus the global dimension is one).

\begin{corollary}\label{corollary:UnionOfAisles}
Let $\AA$ be a hereditary category, and let $(\UU(n))_{n \in \bZ}$ be a sequence of aisles in $\Db \AA$ with the following property: for each $k \in \bZ$, we have $H_k \UU(n) = H_k \UU(n+1)$ for all $n \gg 0$.  We have
\begin{enumerate}
\item if the sequence $(\UU(n))_{n \in \bZ}$ is decreasing (thus $\UU(n) \supseteq \UU(n+1)$ for all $n \in \bZ$), then $\bigcap_{n \in \bZ} \UU(n)$ is an aisle, and
\item if the sequence $(\UU(n))_{n \in \bZ}$ is increasing (thus $\UU(n) \subseteq \UU(n+1)$ for all $n \in \bZ$), then $\bigcup_{n \in \bZ} \UU(n)$ is an aisle.
\end{enumerate}
\end{corollary}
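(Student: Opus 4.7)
The plan is to deduce the corollary directly from Proposition~\ref{proposition:UnionOfAisles} applied to the hereditary case (which has global dimension at most one), by translating the homology stabilization hypothesis into the truncation stabilization hypothesis required by the proposition. The bridge between the two conditions is Proposition~\ref{proposition:RetractsCoproducts}: when $\AA$ is hereditary, every aisle $\UU \subseteq \Db \AA$ is homology-determined, so the truncation $(\UU)_{[-k,k]}$ is fully determined by the finite collection of subcategories $(H_j \UU)_{-k \leq j \leq k}$.

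More explicitly, I would use the decomposition $X \cong \coprod_j H_j X[j]$ valid for every $X \in \Db \AA$ in the hereditary case, together with the fact that an aisle is closed under retracts (Lemma~\ref{lemma:AdjointRetract}), to observe that an object $Y$ with homology concentrated in degrees $[-k,k]$ lies in $(\UU)_{[-k,k]}$ if and only if $Y \cong \coprod_{j=-k}^{k} M_j[j]$ with $M_j \in H_j \UU$. In particular, if $H_j \UU(n) = H_j \UU(n+1)$ for every $j \in [-k,k]$, then $(\UU(n))_{[-k,k]} = (\UU(n+1))_{[-k,k]}$.

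It then remains to verify the stabilization hypothesis of Proposition~\ref{proposition:UnionOfAisles}. Fix $k \in \bZ$. By the corollary hypothesis, for each $j$ in the finite set $\{-k, -k+1, \dots, k\}$ there is an $N_j \in \bN$ such that $H_j \UU(n) = H_j \UU(n+1)$ for all $n \geq N_j$. Setting $N = \max_{-k \leq j \leq k} N_j$ and invoking the observation above, we obtain $(\UU(n))_{[-k,k]} = (\UU(n+1))_{[-k,k]}$ for all $n \geq N$. Applying Proposition~\ref{proposition:UnionOfAisles} to the increasing or decreasing case as appropriate then gives the result.

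The argument is essentially bookkeeping once the identification of $(\UU)_{[-k,k]}$ via its homologies is in place; that identification is the only step that genuinely uses heredity, and it is the main (modest) obstacle.
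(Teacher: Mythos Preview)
Your proposal is correct and follows exactly the route the paper takes: the paper's proof is the single sentence ``This follows from Proposition~\ref{proposition:UnionOfAisles} together with Proposition~\ref{proposition:RetractsCoproducts},'' and you have simply unpacked what that sentence means, using the homology-determination of aisles in the hereditary case to convert the $H_k$-stabilization hypothesis into the $[-k,k]$-truncation stabilization hypothesis of Proposition~\ref{proposition:UnionOfAisles}.
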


\begin{proof}
This follows from Proposition \ref{proposition:UnionOfAisles} together with Proposition \ref{proposition:RetractsCoproducts}.
\end{proof}

The following lemma shows there is a nice connection between narrow sequences and the restrictions in Definition \ref{definition:Restrictions}.

\begin{lemma}\label{lemma:CuttingUpRestrictions}
Let $(\NN(n))_{n \in \bZ}$ be a narrow sequence, and write $\UU = \theta((\NN(n))_{n \in \bZ})$.  For all $-\infty \leq k \leq l \leq +\infty$, we have that
$$H_n (\UU|_{k}^l) = \left\{ \begin{array}{ll} 0 & n < k \\ \NN(n) & k \leq n \leq l \\ \wide \NN(l) & l < n \end{array} \right.$$
\end{lemma}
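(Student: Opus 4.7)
The plan is to split into three cases according to the region of $n$ and verify each directly from the definitions, using that $\UU = \theta((\NN(n))_n)$ means $X \in \UU \Leftrightarrow H_i X \in \NN(i)$ for all $i$, combined with the alternative descriptions
\begin{align*}
\UU|_{k}^{+\infty} &= \{X \in \UU \mid H_i X = 0 \text{ for } i<k\},\\
\UU|_{-\infty}^{l} &= \{X \in \UU \mid H_i X \in \wide \NN(l) \text{ for all } i\}
\end{align*}
given right after Definition \ref{definition:Restrictions}. Note that here $H_l \UU = \NN(l)$ (by construction of $\theta$), so $\thick H_l \UU$ consists exactly of complexes whose homologies lie in $\wide \NN(l)$.

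The case $n<k$ is immediate: every $X \in \UU|_k^l \subseteq \UU|_k^{+\infty}$ has $H_n X = 0$. For the case $k \leq n \leq l$, the inclusion $H_n(\UU|_k^l) \subseteq \NN(n)$ is immediate from $\UU|_k^l \subseteq \UU$. For the reverse, given $A \in \NN(n)$, I would consider the stalk $A[n]$. Its only nonzero homology is $A \in \NN(n)$, so $A[n] \in \UU$; the condition $n \geq k$ gives $A[n] \in \UU|_k^{+\infty}$; and since $n \leq l$ the narrow sequence is monotone so $A \in \NN(n) \subseteq \NN(l) \subseteq \wide \NN(l)$, placing $A[n] \in \UU|_{-\infty}^l$. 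Thus $A[n] \in \UU|_k^l$ and $H_n(A[n]) = A$.

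For the case $n > l$, the inclusion $H_n(\UU|_k^l) \subseteq \wide \NN(l)$ is immediate from the constraint defining $\UU|_{-\infty}^l$. For the reverse, given $B \in \wide \NN(l)$, the key input is Corollary \ref{corollary:GrowingFastEnough}, which gives $\wide \NN(l) \subseteq \NN(l+1) \subseteq \NN(n)$ (using monotonicity and $n \geq l+1$). Thus $B \in \NN(n)$, so the stalk $B[n]$ lies in $\UU$; and $B[n]$ lies in $\UU|_k^{+\infty}$ (since $n > l \geq k$) and in $\UU|_{-\infty}^l$ (since its only nonzero homology is $B \in \wide \NN(l)$). So $B = H_n(B[n]) \in H_n(\UU|_k^l)$.

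The argument is essentially routine bookkeeping with stalk complexes; the only substantive input is Corollary \ref{corollary:GrowingFastEnough}, which is exactly what is needed to control the ``overflow'' region $n > l$ and explain why it is $\wide \NN(l)$ (and not something larger like $\NN(n)$) that appears there. Edge cases $k = -\infty$ or $l = +\infty$ are covered automatically, since in those cases the corresponding constraint in the definition of $\UU|_k^l$ is vacuous and the piecewise formula collapses appropriately.
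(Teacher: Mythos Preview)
Your proof is correct and is precisely the approach the paper takes: the paper's proof is the one-liner ``This follows directly from the definitions and Corollary \ref{corollary:GrowingFastEnough},'' and your argument is simply the explicit unpacking of that statement via stalk complexes in each of the three ranges.
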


\begin{proof}
This follows directly from the definitions and Corollary \ref{corollary:GrowingFastEnough}.
\end{proof}

\begin{lemma}\label{lemma:GluingPieces}
For $k \leq l < m$, we have $\UU|_k^l \ast \UU|_{l+1}^m = \UU |_k^m.$
\end{lemma}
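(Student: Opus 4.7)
The plan is to prove both inclusions using Lemma \ref{lemma:CuttingUpRestrictions}, which explicitly describes the homologies of the three restricted preaisles, together with Corollary \ref{corollary:GrowingFastEnough}, which says $\wide \NN(j) \subseteq \NN(j+1)$.

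For the easy inclusion $\UU|_k^m \subseteq \UU|_k^l \ast \UU|_{l+1}^m$, I would use that $\AA$ is hereditary, so any $Z \in \Db \AA$ splits canonically as $Z \cong \bigoplus_n H_n Z [n]$. Setting $X := \bigoplus_{n \leq l} H_n Z[n]$ and $Y := \bigoplus_{n > l} H_n Z[n]$ yields a split triangle $X \to Z \to Y \to X[1]$, and comparing homologies against Lemma \ref{lemma:CuttingUpRestrictions} shows $X \in \UU|_k^l$ and $Y \in \UU|_{l+1}^m$.

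For the harder direction $\UU|_k^l \ast \UU|_{l+1}^m \subseteq \UU|_k^m$, I would take a triangle $X \to Z \to Y \to X[1]$ with $X \in \UU|_k^l$ and $Y \in \UU|_{l+1}^m$ and verify degree by degree that $H_n Z$ lies in the subcategory prescribed for $\UU|_k^m$ by Lemma \ref{lemma:CuttingUpRestrictions}. The tool is the five-term segment $H_{n+1} Y \to H_n X \to H_n Z \to H_n Y \to H_{n-1} X$ of the long exact homology sequence. For $n < k$ it forces $H_n Z = 0$. For $k \leq n \leq m$, after using Corollary \ref{corollary:GrowingFastEnough} to promote $\wide \NN(l)$-members into $\NN(l+1) \subseteq \NN(n)$, the outer four terms lie in $\NN(n+1), \NN(n), \NN(n), \NN(n-1)$ respectively, so the narrow sequence axiom (Definition \ref{definition:NarrowSequence}(2)) yields $H_n Z \in \NN(n)$. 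For $n > m$ all four outer terms sit in the abelian, extension-closed subcategory $\wide \NN(m)$ (using $\wide \NN(l) \subseteq \wide \NN(m)$), so $H_n Z$ lies in $\wide \NN(m)$ as well.

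The main obstacle is the bookkeeping at the transition indices $n \in \{l, l+1, m\}$: one has to verify that the outer four terms actually match the hypotheses of Definition \ref{definition:NarrowSequence}(2) with the correct index choice, and for this one must repeatedly invoke Corollary \ref{corollary:GrowingFastEnough} to push elements of $\wide \NN(l)$ into $\NN(l+1)$ and elements of $\wide \NN(m)$ into $\NN(m+1)$. Away from these seams the verification is routine.
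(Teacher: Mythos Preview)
Your proof is correct, but for the inclusion $\UU|_k^l \ast \UU|_{l+1}^m \subseteq \UU|_k^m$ you are working much harder than necessary. The paper's argument for this direction is a one-liner: directly from Definition~\ref{definition:Restrictions} one has $\UU|_k^l \subseteq \UU|_k^m$ and $\UU|_{l+1}^m \subseteq \UU|_k^m$ (since $\thick H_l \UU \subseteq \thick H_m \UU$ and $D^{\leq -(l+1)} \subseteq D^{\leq -k}$), and $\UU|_k^m$ is closed under extensions because it is an intersection of preaisles. Hence $\UU|_k^l \ast \UU|_{l+1}^m \subseteq \UU|_k^m$ with no homology computation and no appeal to the narrow sequence axiom.

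Your approach via the long exact sequence and Definition~\ref{definition:NarrowSequence}(2) does work, and it has the minor conceptual advantage of not using that $\UU|_k^m$ is itself a preaisle; but in this paper that fact is free (it is even invoked explicitly in the proof of Proposition~\ref{proposition:UltimatePasting}), so the case analysis at the seams $n \in \{l, l+1, m\}$ is avoidable effort. For the other inclusion $\UU|_k^m \subseteq \UU|_k^l \ast \UU|_{l+1}^m$, your argument via the hereditary splitting $Z \cong \bigoplus_n H_n Z[n]$ is exactly what the paper does.
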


\begin{proof}
By heredity, we know that every object in $\UU |_k^m$ is a direct sum of an object in $\UU|_k^l$ and an object in  $\UU|_{l+1}^m$.  The required property then follows easily.
\end{proof}

\begin{proposition}\label{proposition:UltimatePasting}
Let $\UU$ be a homology determined preailse.  If $\UU|_k^k$ are aisles, for all $k \in \bZ$, then $\UU$ is an aisle.
\end{proposition}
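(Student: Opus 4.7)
The plan is to reconstruct $\UU$ from the pieces $\UU|_k^k$ in two stages: first build the single-direction-bounded restrictions $\UU|_k^m$ by iterated $\ast$-products, then form $\UU$ itself via two nested increasing unions of aisles. As a preliminary matter I would verify that every restriction $\UU|_k^l$ of Definition \ref{definition:Restrictions} is at least a preaisle: $\UU$ is one by assumption, $D^{\leq -k}$ is the shifted standard aisle (so closed under extensions, and closed under suspension because $X \in D^{\leq -k}$ implies $X[1] \in D^{\leq -(k+1)} \subseteq D^{\leq -k}$), and $\thick H_l\UU$ is thick, hence triangulated. The relevant intersections therefore inherit closure under suspensions and extensions. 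This also makes the hypothesis that ``$\UU|_k^k$ is an aisle'' meaningful.

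Second, I would show by induction on $m \geq k$ that $\UU|_k^m$ is an aisle for every $k \in \bZ$. The base case $m=k$ is the assumption. For $m > k$, Lemma \ref{lemma:GluingPieces} gives $\UU|_k^m = \UU|_k^{m-1} \ast \UU|_m^m$; the left factor is an aisle by induction, the right factor is an aisle by hypothesis, and the $\ast$-product is a preaisle by the preliminary step, so Proposition \ref{proposition:Chen} promotes it to an aisle.

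Third, I would apply Corollary \ref{corollary:UnionOfAisles} twice along nested directed families. For any fixed $k$ one has $\UU|_k^{+\infty} = \bigcup_{m \geq k} \UU|_k^m$, because any $X \in \UU|_k^{+\infty}$ is bounded, so its nonzero homologies lie in some finite interval $[k,M]$ and each $H_jX \in \NN(j) \subseteq \NN(M) \subseteq \wide \NN(M)$, forcing $X \in \UU|_k^M$. Lemma \ref{lemma:CuttingUpRestrictions} shows $H_j(\UU|_k^m) = \NN(j)$ once $m \geq j$, so homologies stabilize in each degree and Corollary \ref{corollary:UnionOfAisles}(2) gives that $\UU|_k^{+\infty}$ is an aisle. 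On the other hand, every $X \in \UU$ is bounded, hence lies in $\UU|_{-n}^{+\infty}$ for all $n$ sufficiently large, so $\UU = \bigcup_{n \geq 0} \UU|_{-n}^{+\infty}$ is an increasing union of aisles whose degree-$j$ homologies stabilize to $\NN(j)$. A second application of Corollary \ref{corollary:UnionOfAisles}(2) concludes.

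The hardest part is not conceptual but bookkeeping: one must arrange the two nested directed unions so that the stabilization hypothesis of Corollary \ref{corollary:UnionOfAisles} holds, and confirm at each inductive stage that the $\ast$-product produced by Lemma \ref{lemma:GluingPieces} really is a preaisle (which itself reduces to the preliminary step). Both tasks are routine verifications using the narrow-sequence structure of $\UU$ and the boundedness of objects in $\Db \AA$, and no new ideas are required beyond the apparatus already assembled in this section.
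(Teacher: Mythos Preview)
Your proposal is correct and follows essentially the same route as the paper's proof: first observe that each $\UU|_k^m$ is a preaisle as an intersection of preaisles, then use Lemma~\ref{lemma:GluingPieces} together with Proposition~\ref{proposition:Chen} inductively to upgrade each $\UU|_k^m$ to an aisle, and finally invoke Corollary~\ref{corollary:UnionOfAisles} to conclude. The only cosmetic difference is in the union step: the paper takes the single diagonal family $\UU(n) = \UU|_{-n}^n$ and applies Corollary~\ref{corollary:UnionOfAisles} once, whereas you organize it as two nested unions (first over $m$ to obtain $\UU|_k^{+\infty}$, then over $k$); both arrangements satisfy the stabilization hypothesis via Lemma~\ref{lemma:CuttingUpRestrictions}, so this is a matter of bookkeeping rather than a different argument.
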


\begin{proof}
Being intersections of preaisles, all the $\UU|_k^m$ are preaisles.  So using induction, one can use Proposition \ref{proposition:Chen} and Lemma \ref{lemma:GluingPieces} to show that $\UU |_k^m$ is an aisle, for all $k \leq m$.  We define $\UU(n) = \UU |_{-n}^n$.  It then follows from Lemma \ref{lemma:CuttingUpRestrictions} that this sequence of aisles satisfies the conditions of Corollary \ref{corollary:UnionOfAisles}, and hence $\UU = \bigcup_n \UU(n)$ is an aisle.
\end{proof}

\begin{corollary}\label{corollary:Pasting}
Let $(\NN(n))_{n \in \bZ}$ be a narrow sequence such that
\begin{enumerate}
\item each $\NN(k) \to \AA$ has a right adjoint, and
\item each $\thick (\NN(k)) \to \Db \AA$ has a right adjoint.
\end{enumerate}
Then $\UU = \theta((\NN(n))_{n \in \bZ})$ is an aisle in $\Db \AA$.
\end{corollary}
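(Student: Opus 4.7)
The plan is to apply Proposition \ref{proposition:UltimatePasting}, which reduces the problem to showing that each slice $\UU|_k^k$ is an aisle in $\Db \AA$. Since Proposition \ref{proposition:NarrowPreaisle} tells us $\UU$ is a homology-determined preaisle, this reduction is available.

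First I would analyze $\UU|_k^k$ using Lemma \ref{lemma:CuttingUpRestrictions}: an object $X$ lies in $\UU|_k^k$ if and only if $H_n X = 0$ for $n<k$, $H_k X \in \NN(k)$, and $H_n X \in \wide \NN(k)$ for $n>k$. In particular $\UU|_k^k \subseteq \thick \NN(k)$, and under the identification $\thick \NN(k) \cong \Db \wide \NN(k)$, the subcategory $\UU|_k^k[-k]$ is precisely the Happel aisle in $\Db \wide \NN(k)$ associated with the subcategory $\NN(k) \subseteq \wide \NN(k)$ (see Section \ref{subsection:Torsion}).

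Second I would verify that this is genuinely an aisle in $\thick \NN(k)$. Hypothesis (1) says $\NN(k) \to \AA$ has a right adjoint; restricting this right adjoint to $\wide \NN(k) \subseteq \AA$ yields a right adjoint to $\NN(k) \to \wide \NN(k)$ (the image still lies in $\NN(k)$, and the universal property transfers). Thus $\NN(k)$ is a coreflective narrow subcategory of $\wide \NN(k)$, hence by Corollary \ref{corollary:NarrowIsTorsion} a tilting torsion subcategory of $\wide \NN(k)$. The Happel correspondence then produces an aisle in $\Db \wide \NN(k)$ which, under the shift, coincides with $\UU|_k^k$ inside $\thick \NN(k)$.

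Third, I would compose adjoints: the embedding $\UU|_k^k \to \Db \AA$ factors as $\UU|_k^k \to \thick \NN(k) \to \Db \AA$, where the first functor admits a right adjoint by the previous step and the second by hypothesis (2). Hence $\UU|_k^k$ is an aisle in $\Db \AA$, and Proposition \ref{proposition:UltimatePasting} concludes that $\UU$ itself is an aisle. The only subtle point is the identification in step one of $\UU|_k^k$ with the Happel aisle for $\NN(k) \subseteq \wide \NN(k)$; once that is in place, everything else is formal via composition of right adjoints.
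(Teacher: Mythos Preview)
Your proposal is correct and follows essentially the same approach as the paper's proof: reduce via Proposition \ref{proposition:UltimatePasting} to showing each $\UU|_k^k$ is an aisle, identify $\UU|_k^k$ (up to shift) with the torsion-theoretic aisle in $\thick \NN(k) \cong \Db \wide \NN(k)$ via Corollary \ref{corollary:NarrowIsTorsion}, and then compose with the right adjoint from hypothesis (2). You spell out a few details the paper leaves implicit (the use of Lemma \ref{lemma:CuttingUpRestrictions} and the restriction of the right adjoint from $\AA$ to $\wide \NN(k)$), but the argument is the same.
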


\begin{proof}
Recall from Corollary \ref{corollary:NarrowIsTorsion} that $\NN(k)$ is a tilting torsion class in $\wide \NN(k)$, so that $\UU|_{k}^{k}$ is an aisle in $\thick \NN(k)$ (see Section \ref{subsection:Torsion}).  Since the embedding $\thick \NN(k) \to \Db \AA$ has a right adjoint, it follows that $\UU|_k^k$ is an aisle in $\Db \AA$.  The statement then follows from Proposition \ref{proposition:UltimatePasting}.
\end{proof}

\begin{proposition}\label{proposition:Subaisles}
Let $\UU$ be an aisle, then for all $k \leq l$ we have
\begin{enumerate}
\item $\UU |_k^\infty$ is an aisle,
\item $\UU |_{- \infty}^l$ is an aisle when $\thick H_l\UU \to \Db \AA$ has a right adjoint,
\item $\UU |_k^l$ is an aisle when $\thick H_l\UU \to \Db \AA$ has a right adjoint.
\end{enumerate}
\end{proposition}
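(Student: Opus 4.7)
The plan is to handle each part in turn, reducing (3) to (1) and (2). Throughout I use that $\UU$, being an aisle in the bounded derived category of a hereditary category, is homology-determined (Proposition \ref{proposition:RetractsCoproducts}), so every $U \in \UU$ decomposes as $\bigoplus_i U_i[i]$ with $U_i[i] \in \UU$. For part (1), I would take $V := \bigoplus_{i \geq k} (X_\UU)_i[i]$, the truncation of $X_\UU$ to homological degrees $\geq k$; it lies in $\UU \cap D^{\leq -k} = \UU|_k^\infty$ by homology-determination and by construction. The universal property then follows from the hereditary formula $\Hom_{\Db\AA}(A,B) \cong \prod_{p,q} \Ext^{q-p}_\AA(A_p, B_q)$: the summands vanish unless $q \in \{p, p+1\}$, and for $Y \in \UU|_k^\infty$ the indices $p$ are automatically $\geq k$, so both $\Hom(Y, X_\UU)$ and $\Hom(Y, V)$ collapse to the same sum.

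For part (2), set $\TT = \thick H_l\UU = \Db\WW_l$ with $\WW_l := \wide \NN(l)$ and $\NN(k) := H_k\UU$. I would apply Proposition \ref{proposition:IntersectionAdjoint} with $\CC = \TT$ and $\CC' = \UU$; the goal becomes $(X_\TT)_\UU \in \TT$ for every $X \in \Db\AA$. Decomposing $X_\TT = \bigoplus_i (X_\TT)_i[i]$ with $(X_\TT)_i \in \WW_l$, additivity of right adjoints reduces to the single pieces $((X_\TT)_i[i])_\UU$. For $i \geq l+1$, Corollary \ref{corollary:GrowingFastEnough} yields $\WW_l \subseteq \NN(l+1) \subseteq \NN(i)$, so $(X_\TT)_i[i]$ already lies in $\UU$ and its $\UU$-coreflection is itself, still in $\TT$. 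For $i \leq l$, Lemma \ref{lemma:HereditaryAdjoints} forces $((X_\TT)_i[i])_\UU$ to have homology only in degrees $i, i-1$; homology-determination then places these homologies in $\NN(i) \cup \NN(i-1) \subseteq \NN(l) \subseteq \WW_l$, so this piece also sits in $\TT$. Hence $(X_\TT)_\UU \in \TT$. Because $\UU \cap \TT$ is visibly a preaisle, the existence of the right adjoint upgrades it to an aisle.

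The main obstacle is choosing the correct order when invoking Proposition \ref{proposition:IntersectionAdjoint}. The symmetric attempt of showing $(X_\UU)_\TT \in \UU$ breaks down at degree $l+1$: by Lemma \ref{lemma:HereditaryAdjoints} the $\TT$-coreflection of an object of $\NN(l+1)$ can carry an extra summand in degree $l$ lying in $\WW_l$ but not a priori in $\NN(l)$, and when $\NN(l)$ is a proper tilting nullity class in $\WW_l$ there is no reason for this summand to land in $\UU$. Composing the adjoints in the opposite order sidesteps this asymmetry because the homologies that arise stay inside $\NN(l) \subseteq \WW_l$.

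Finally, for part (3), observe $\UU|_k^l = \UU \cap D^{\leq -k} \cap \TT = (\UU \cap \TT) \cap D^{\leq -k} = (\UU|_{-\infty}^l)|_k^\infty$. Since $\UU|_{-\infty}^l$ is an aisle by part (2), applying part (1) to this aisle yields that $\UU|_k^l$ is an aisle.
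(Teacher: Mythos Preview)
Your proposal is correct and follows essentially the same approach as the paper: both use Proposition \ref{proposition:IntersectionAdjoint} with the composite $(X_\TT)_\UU$ for part (2), splitting into the cases $i>l$ and $i\leq l$ and invoking Lemma \ref{lemma:HereditaryAdjoints} for the latter. The only cosmetic differences are that for part (1) you verify the universal property of the truncation directly rather than via Proposition \ref{proposition:IntersectionAdjoint}, and for part (3) you write $\UU|_k^l = (\UU|_{-\infty}^l)|_k^\infty$ and apply (1) to (2), whereas the paper writes $(\UU|_k^\infty)|_{-\infty}^l$ and applies (2) to (1); both orderings work.
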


\begin{proof}
Since $\UU |_k^{+\infty} = \UU \cap \DD^{\leq -k}$ is the intersection of two preaisles, it is a preaisle itself.  The right adjoint to the embedding $\UU |_k^{+\infty} \to \Db \AA$ is given by the composition $\Db \AA \to \UU \to \DD^{\leq -k}$, thus mapping an object $X \in D^* \AA$ to $(X_\UU)_{\DD^{\leq -k}}$.  Indeed, by Proposition \ref{proposition:IntersectionAdjoint} it suffices to check that $(X_\UU)_{\DD^{\leq -k}} \in \UU$.  For any $X \in \Db \AA$, we know by heredity that $(X_\UU)_{\DD^{\leq -k}}$ is a direct summand of $X_\UU$.  Since $\UU$ is an aisle, and thus closed under direct summands by Lemma \ref{lemma:AdjointRetract}, we see that $(X_\UU)_{\DD^{\leq k}} \in \UU$ as required.

We now turn to the second statement.  Thus we will show that the preaisle $\UU |_{-\infty}^{l}$ is an aisle.  To ease notation, let $\TT = \thick H_l\UU$.

Let $X \in \Db \AA$; we will show that $(X_\TT)_\UU \in \TT$ so that the required property follows from Proposition \ref{proposition:IntersectionAdjoint}.

Since $(-)_\UU$ is a right adjoint and hence commutes with products, it suffices to check that $(H_k (X_\TT)[k])_\UU \in \TT$, for all $k \in \bZ$.  We will consider two cases.  The first case is where $k > l$ so that $H_k (X_\TT)[k] \in \UU$ and thus $H_k (X_\TT)[k] \cong (H_k (X_\TT)[k])_\UU$.  The required property follows easily.

The second case is where $k \leq l$.  By Lemma \ref{lemma:HereditaryAdjoints}, we know that $(H_k (X_\TT)[k])_\UU$ can only have nonzero homologies in degrees $k, k-1$ so that $(H_k (X_\TT)[k])_\UU \in \TT$.  Again, the required property holds.

The last statement follows from the other two, since $\UU|_{k}^l = (\UU|_k^\infty)|_{-\infty}^l$.
\end{proof}

\begin{lemma}\label{lemma:AdjointInAisle}
Let $\UU$ be an aisle, and write $\TT = \thick H_k\UU$.  Assume that $\TT \to \Db \AA$ has a left adjoint.  Then ${}^\perp \TT \to \Db \AA$ has a right adjoint, and for any $X \in \UU$ we have that $X^\TT \in \UU$ and $X_{{}^\perp \TT} \in \UU$.
\end{lemma}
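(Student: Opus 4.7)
The first claim is immediate from Proposition~\ref{proposition:Perpendicular}. Let $L$ be the given left adjoint to $j:\TT\hookrightarrow\Db\AA$; for each $D\in\Db\AA$, completing the unit $D\to LD$ to a triangle produces a distinguished triangle $D_{{}^\perp\TT}\to D\to D^\TT\to D_{{}^\perp\TT}[1]$ with $D^\TT:=LD$, and a direct check from the adjunction shows $D_{{}^\perp\TT}\in{}^\perp\TT$. This is exactly condition (2) of Proposition~\ref{proposition:Perpendicular} for $\BB={}^\perp\TT$, $\CC=\TT$, and the equivalence (3)$\Leftrightarrow$(4) supplies a right adjoint to ${}^\perp\TT\hookrightarrow\Db\AA$.

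For the membership claims, recall that $\UU$ is homology-determined by Proposition~\ref{proposition:RetractsCoproducts}; write $\NN(m):=H_m\UU$ for the associated narrow sequence. Since $\AA$ is hereditary, $X\cong\bigoplus_n A_n[n]$ with $A_n=H_nX\in\NN(n)$, and since $(-)^\TT$ and $(-)_{{}^\perp\TT}$ are respectively a left and a right adjoint while finite direct sums and products agree in $\Db\AA$, both functors commute with this decomposition. This reduces the analysis to $A[n]$ with $A\in\NN(n)$. When $n\leq k$, the inclusions $\NN(n)\subseteq\NN(k)\subseteq\wide\NN(k)$ place $A[n]$ already in $\TT$, so $(A[n])^\TT=A[n]$ and $(A[n])_{{}^\perp\TT}=0$, both trivially in $\UU$. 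When $n>k$, the dual of Lemma~\ref{lemma:HereditaryAdjoints} forces $(A[n])^\TT$ to have homology in degrees $n$ and $n+1$, each living in $\wide\NN(k)$; Corollary~\ref{corollary:GrowingFastEnough} then places these homologies in $\wide\NN(k)\subseteq\NN(k+1)\subseteq\NN(n)\cap\NN(n+1)$, so $(A[n])^\TT\in\UU$ by the homology-determined property. Assembling the summands gives $X^\TT\in\UU$.

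For $Y:=X_{{}^\perp\TT}$, the long exact sequence from the triangle $Y\to X\to X^\TT\to Y[1]$ supplies, for each $m\in\bZ$, the five-term exact sequence
\[
H_{m+1}(X)\to H_{m+1}(X^\TT)\to H_m(Y)\to H_m(X)\to H_m(X^\TT).
\]
The summand analysis combined with Lemma~\ref{lemma:HereditaryAdjoints} applied to the coreflective subcategory ${}^\perp\TT$ shows $H_m(Y)=0$ for $m<k$, which seeds an induction on $m$. For $m\geq k+2$ the memberships $H_{m+1}(X),H_{m+1}(X^\TT)\in\NN(m+1)$ and $H_m(X),H_m(X^\TT)\in\NN(m)$ (together with Corollary~\ref{corollary:GrowingFastEnough}, which upgrades $H_{m+1}(X^\TT)\in\wide\NN(k)$ to $H_{m+1}(X^\TT)\in\NN(m)$ and $H_m(X^\TT)\in\NN(m-1)$) allow clause (2) of Definition~\ref{definition:NarrowSequence} to be applied to the displayed sequence with middle term $H_m(Y)$, concluding $H_m(Y)\in\NN(m)$. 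At the boundary degrees $m\in\{k,k+1\}$, where Corollary~\ref{corollary:GrowingFastEnough} no longer provides the needed inclusions, one uses in addition that $H_m(Y)\in{}^\perp\wide\NN(k)$ (a direct consequence of $Y\in{}^\perp\TT$ together with the hereditary decomposition of $Y$): the explicit description of $H_m(Y)$ coming from the long exact sequence places it in $\wide\NN(k)$ as well, and $\wide\NN(k)\cap{}^\perp\wide\NN(k)=0$ (the identity on such an object vanishes) forces $H_m(Y)=0$. Once $H_m(Y)\in\NN(m)$ for all $m$, the homology-determined property of $\UU$ gives $Y\in\UU$.

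\paragraph*{Main obstacle.}
The easy triangle-theoretic observation, that rotating $Y\to X\to X^\TT\to Y[1]$ and invoking closure of $\UU$ under extensions yields $Y[1]\in\UU$, is exactly one degree weaker than required: it gives $H_m(Y)\in\NN(m+1)$ rather than $H_m(Y)\in\NN(m)$. The narrow-sequence condition closes this gap cleanly in the generic degrees $m\geq k+2$, but at the boundary $m\in\{k,k+1\}$ the narrow condition alone is inadequate, and one must bring in the extra information $Y\in{}^\perp\TT$ to trap the offending homology inside $\wide\NN(k)\cap{}^\perp\wide\NN(k)=0$. Ensuring cleanly that the relevant homology actually lies in $\wide\NN(k)$ (rather than merely in ${}^\perp\wide\NN(k)$) is the technical crux.
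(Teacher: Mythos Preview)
Your treatment of the first claim and of $X^\TT\in\UU$ is correct and matches the paper. The argument for $H_m(Y)\in\NN(m)$ when $m\geq k+2$ via the five-term sequence and clause~(2) of Definition~\ref{definition:NarrowSequence} also works and is a pleasant repackaging of the paper's case $n>k+1$.

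The genuine gap is at the boundary, and it is not merely technical: your conclusion $H_{k+1}(Y)=0$ is \emph{false} in general. If it held for every stalk $X=A[k+1]$ with $A\in\NN(k+1)$, then (together with $H_k(Y)=0$, which is correct) the triangle $Y\to X\to X^\TT$ would force $A[k+1]\cong X^\TT\in\TT$, i.e.\ $A\in\wide\NN(k)$; this would give $\NN(k+1)\subseteq\wide\NN(k)$, which fails for essentially any aisle where the narrow sequence is strictly growing at $k$. So the strategy ``$H_{k+1}(Y)\in\wide\NN(k)\cap{}^\perp\wide\NN(k)=0$'' cannot succeed: the first membership is simply not there. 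Concretely, in the five-term sequence for $m=k+1$ the kernel of $H_{k+1}(X)\to H_{k+1}(X^\TT)$ is only a subobject of something in $\NN(k+1)$, and wide subcategories are not closed under subobjects, so the long exact sequence does not place $H_{k+1}(Y)$ in $\wide\NN(k)$.

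The same objection undermines your $m=k$ step: you assert that ``the long exact sequence places $H_k(Y)$ in $\wide\NN(k)$'', but $H_k(Y)$ is the cokernel of a map $H_{k+1}(X)\to H_{k+1}(X^\TT)$ whose source lies only in $\NN(k+1)$; the image need not lie in $\wide\NN(k)$, so neither need the cokernel. The paper closes this case differently: it invokes the \emph{tilting} property of $\NN(k)$ inside $\wide\NN(k)$ (Corollary~\ref{corollary:NarrowIsPretorsion}) to embed $(X^\TT)_{k+1}$ into some $Z\in\NN(k)$, and a Four-Lemma chase then exhibits $H_k(Y)$ as a subobject of a cokernel lying in $\wide\NN(k)$, whence $H_k(Y)\in{}^\perp\wide\NN(k)\cap\wide\NN(k)=0$. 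Only after $H_k(Y)=0$ is secured does the paper turn to $H_{k+1}(Y)$, and there it proves membership in $\NN(k+1)$ (not vanishing) by again using the tilting monomorphism $(X^\TT)_{k+1}\hookrightarrow Z$ to manufacture a four-term exact sequence to which Definition~\ref{definition:NarrowSequence}(2) applies. You need both of these ingredients; the ${}^\perp\TT$-membership of $Y$ alone is not enough at degree $k+1$.
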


\begin{proof}
That ${}^\perp \TT \to \Db \AA$ has a right adjoint follows from Proposition \ref{proposition:Perpendicular}.  For the second statement, let $X \in \UU$.  Without loss of generality, we may assume that $X$ is a stalk complex concentrated in degree $n \in \bZ$.

We will first proof that $X^\TT \in \UU$.  If $n \leq k$, then $X \in \TT$ so that $X^\TT \cong X$ and thus $X^\TT \in \UU$.  For the case where $n > k$, note that $(X^\TT)_i$ is only nonzero for $i \geq n > k$ (see for example the dual of Lemma \ref{lemma:HereditaryAdjoints}), so that $X^\TT \in \UU$ since $\TT_i \subseteq \UU_i$ by Corollary \ref{corollary:GrowingFastEnough}. 

To prove that $X_{{}^\perp \TT} \in \UU$ we will start from the triangle
$$X^{\TT}[-1] \to X_{{}^\perp \TT} \to X \to X^\TT$$
given by Proposition \ref{proposition:Perpendicular}.  Since $\UU_n \subseteq \TT_n$ for $n \leq k$, the case where the homology of $X$ is concentrated in degree $n \leq k$ yields $X_{{}^\perp \TT} = 0$ so that $X_{{}^\perp \TT} \in \UU$ as required.

The second case we will consider is when the homology of $X$ is concentrated in degree $n > k + 1$.  In this case the dual of Lemma \ref{lemma:HereditaryAdjoints} shows that the homologies of $X^\TT$ can only be nonzero in degrees at least $k+2$.  Since $\UU_l \supseteq \TT_l$ for $l > k$ (see Corollary \ref{corollary:GrowingFastEnough}), we see that $X^{\TT}[-1] \in \UU$ and thus also $X_{{}^\perp \TT} \in \UU$ (by the above triangle).

The third and last case we consider is where the homology of $X$ is concentrated in degree $k+1$.  We take homologies of the above triangle to obtain
$$0 \to (X^\TT)_{k+2} \to (X_{{}^\perp \TT})_{k+1} \to X_{k+1} \to (X^\TT)_{k+1} \to (X_{{}^\perp \TT})_{k} \to 0.$$
We start by showing that $(X_{{}^\perp \TT})_{k} = 0$.  Since $\UU_k$ is a tilting nullity class in $\TT_{k}$ (see Corollary \ref{corollary:NarrowIsPretorsion}) we know that there is a monomorphism $(X^\TT)_{k+1} \to Z$ where $Z \in \UU_k$ (note that $X^\TT \in \TT$ and since $\TT$ is a thick subcategory, we have $(X^\TT)_{k+1} \in \TT_{k+1} = \TT_k$).  We get the following commutative diagram
$$\xymatrix{X_{k+1} \ar[r] \ar@{=}[d]& (X^\TT)_{k+1} \ar[d]\ar[r] & (X_{{}^\perp \TT})_{k} \ar[r] & 0 \\ 
X_{k+1} \ar[r] & Z \ar[r] & Z' \ar[r] & 0}$$
where the rows are exact and the downward arrow is a monomorphism.  It then follows from the Four Lemma that the induced arrow $(X_{{}^\perp \TT})_{k} \longrightarrow Z'$ is also a monomorphism.  However, the exactness of the last row shows that $Z' \in \UU_k \subseteq \TT_k$.  Indeed, this follows from Definition \ref{definition:NarrowSequence} where $A = X_{k+1}$, $B = Z$, and $C = Z'$.
This then implies that the morphism $(X_{{}^\perp \TT})_{k} \longrightarrow Z'$ is zero and hence that $(X_{{}^\perp \TT})_{k} = 0$ as required.

We are left with showing that $(X_{{}^\perp \TT})_{k+1} \in \UU_{k+1}$.  Therefore, note that $(X^{\TT})_{k+2} \in \TT_{k+2} = \TT_k \subseteq \UU_{k+1}$ (by Corollary \ref{corollary:GrowingFastEnough}) and $(X^{\TT})_{k+1} \in \TT_{k+1} = \TT_{k}$ and hence by Corollary \ref{corollary:NarrowIsPretorsion} there is a monomorphism $(X^{\TT})_{k+1} \to Z$ where $Z \in \UU_k$.  We then have an exact sequence
$$0 \to (X^{\TT})_{k+2} \to (X_{{}^\perp \TT})_{k+1} \to X_{k+1} \to Z$$
and it follows from the definition of a narrow sequence (Definition \ref{definition:NarrowSequence}) that $(X_{{}^\perp \TT})_{k+1} \in \UU_{k+1}$ as required.
\end{proof}

\begin{proposition}\label{proposition:LeftAdjointAisles}
Let $\UU$ be an aisle and assume that $\thick H_k \UU \to \Db \AA$ has both a left and a right adjoint.  Then
\begin{enumerate}
\item $\UU \cap \thick H_k \UU$ is an aisle in $\Db \AA$, and
\item $\UU \cap {}^\perp \thick H_k \UU$ is an aisle in $\Db \AA$.
\end{enumerate}
Moreover, we have $\UU = (\UU \cap {}^\perp \thick H_k \UU) \ast (\UU \cap \thick H_k \UU)$
\end{proposition}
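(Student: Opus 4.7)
The plan is to reduce the statement to already established machinery, in particular Proposition \ref{proposition:Subaisles}, Proposition \ref{proposition:IntersectionAdjoint}, Proposition \ref{proposition:Perpendicular}, and most crucially Lemma \ref{lemma:AdjointInAisle}, which does most of the heavy lifting.

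\textbf{Part (1).} I would simply note that, writing $\TT = \thick H_k \UU$, we have $\UU \cap \TT = \UU|_{-\infty}^{k}$ by Definition \ref{definition:Restrictions}. Since $\TT \to \Db \AA$ has a right adjoint by hypothesis, Proposition \ref{proposition:Subaisles}(2) applies and gives that $\UU|_{-\infty}^{k}$ is an aisle in $\Db \AA$.

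\textbf{Part (2).} The category $\UU \cap {}^\perp \TT$ is a preaisle because $\UU$ is closed under suspensions and extensions, while ${}^\perp \TT$ is a full triangulated subcategory and hence also closed under both operations. To see that $\UU \cap {}^\perp \TT$ is coreflective, I would invoke Proposition \ref{proposition:IntersectionAdjoint}: by hypothesis $\UU \to \Db \AA$ has a right adjoint, and since $\TT \to \Db \AA$ has a left adjoint, Proposition \ref{proposition:Perpendicular} supplies a right adjoint to ${}^\perp \TT \to \Db \AA$. The hypothesis of Proposition \ref{proposition:IntersectionAdjoint} demands that $(D_\UU)_{{}^\perp \TT} \in \UU$ for every $D \in \Db \AA$, which is exactly the content of Lemma \ref{lemma:AdjointInAisle} (applied to the object $D_\UU \in \UU$). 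So $\UU \cap {}^\perp \TT$ is a coreflective preaisle, hence an aisle.

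\textbf{Decomposition.} The inclusion $(\UU \cap {}^\perp \TT) \ast (\UU \cap \TT) \subseteq \UU$ is immediate because $\UU$ is closed under extensions. For the reverse inclusion, I would take any $X \in \UU$ and use Proposition \ref{proposition:Perpendicular} to obtain the canonical triangle
$$X_{{}^\perp \TT} \to X \to X^\TT \to X_{{}^\perp \TT}[1]$$
with $X_{{}^\perp \TT} \in {}^\perp \TT$ and $X^\TT \in \TT$. By Lemma \ref{lemma:AdjointInAisle}, both $X^\TT$ and $X_{{}^\perp \TT}$ lie in $\UU$, which places them in $\UU \cap \TT$ and $\UU \cap {}^\perp \TT$ respectively, giving $X \in (\UU \cap {}^\perp \TT) \ast (\UU \cap \TT)$.

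The main substantive step is really Lemma \ref{lemma:AdjointInAisle}, which is already proved earlier; once it is available, the proof here is essentially a bookkeeping exercise combining the perpendicular triangle from Proposition \ref{proposition:Perpendicular} with the closure property guaranteed by the lemma, plus the general intersection criterion from Proposition \ref{proposition:IntersectionAdjoint}. No further technical obstacles are expected.
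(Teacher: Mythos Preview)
Your proposal is correct and follows essentially the same approach as the paper: Part~(1) via Proposition~\ref{proposition:Subaisles}, Part~(2) via the intersection criterion (Proposition~\ref{proposition:IntersectionAdjoint}) combined with Lemma~\ref{lemma:AdjointInAisle}, and the decomposition via the perpendicular triangle of Proposition~\ref{proposition:Perpendicular} together with Lemma~\ref{lemma:AdjointInAisle}. The paper's proof is terser --- it cites Proposition~\ref{proposition:Subaisles} for Part~(2) as well, presumably meaning ``argue as in the proof of Proposition~\ref{proposition:Subaisles}'' --- but your explicit invocation of Proposition~\ref{proposition:IntersectionAdjoint} is exactly the unpacked version of that reference.
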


\begin{proof}
The first statement follows from Proposition \ref{proposition:Subaisles}; the second statement follows from Proposition \ref{proposition:Subaisles} together with Lemma \ref{lemma:AdjointInAisle}.  For the last statement, the inclusion $\UU \supseteq (\UU \cap {}^\perp \thick H_k \UU) \ast (\UU \cap \thick H_k \UU)$ is clear.  For the other inclusion, let $X \in \UU$.  Write $\TT$ for $\thick H_k \UU$.  There is a triangle
$$X^\TT[-1] \to X_{{}^\perp \TT} \to X \to X^\TT,$$
given by Proposition \ref{proposition:Perpendicular}, where it follows from Lemma \ref{lemma:AdjointInAisle} that $X_{{}^\perp \TT} \in \UU \cap {}^\perp \TT$ and that $X^{\TT} \in \UU \cap \TT$.  This proves the required property.
\end{proof}

\begin{lemma}\label{lemma:BigGluing}
Let $\UU$ be an aisle and write $\TT = \thick H_k \UU$.  Assume that $\TT \to \Db \AA$ has both a left and a right adjoint.  Let $\VV = (\UU \cap {}^\perp \TT) \ast \TT$, then $\VV_n = \UU_n$ for $n > k$.
\end{lemma}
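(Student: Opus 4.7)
The plan is to verify both inclusions $\VV_n\subseteq \UU_n$ and $\UU_n\subseteq \VV_n$ for $n>k$. Throughout I will exploit: that $\UU$ is homology-determined (Proposition \ref{proposition:RetractsCoproducts}), so that $A\in \UU_n$ if and only if $A[n]\in \UU$; Corollary \ref{corollary:GrowingFastEnough}, which provides $\wide \UU_k\subseteq \UU_{k+1}$; the structural results of Lemma \ref{lemma:AdjointInAisle}; and condition (2) in the definition of a narrow sequence (Definition \ref{definition:NarrowSequence}), which I expect to be the engine of the hard inclusion.

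For the easy inclusion $\UU_n\subseteq \VV_n$ (for $n>k$), I would start with $A\in \UU_n$, so $A[n]\in \UU$. The assumption that $\TT\to \Db\AA$ has both a left and a right adjoint lets us apply Proposition \ref{proposition:Perpendicular} to obtain a triangle
$$(A[n])_{{}^\perp\TT}\longrightarrow A[n]\longrightarrow (A[n])^\TT\longrightarrow (A[n])_{{}^\perp\TT}[1].$$
By Lemma \ref{lemma:AdjointInAisle} the left-hand term sits in $\UU\cap {}^\perp\TT$, while the right-hand term is in $\TT$ by construction. Hence $A[n]\in(\UU\cap {}^\perp\TT)\ast \TT=\VV$, and therefore $A=H_n(A[n])\in \VV_n$.

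For the reverse inclusion $\VV_n\subseteq \UU_n$, pick $X\in\VV$, written in a triangle $Y\to X\to Z\to Y[1]$ with $Y\in\UU\cap {}^\perp\TT$ and $Z\in\TT$. The associated long exact homology sequence contains the five-term piece
$$Z_{n+1}\longrightarrow Y_n\longrightarrow X_n\longrightarrow Z_n\longrightarrow Y_{n-1}.$$
Because $Z\in\TT=\thick H_k\UU$, every homology $Z_m$ lies in $\wide \UU_k$. Using Corollary \ref{corollary:GrowingFastEnough} ($\wide \UU_k\subseteq \UU_{k+1}$) and the monotonicity in Definition \ref{definition:NarrowSequence}(1) ($\UU_{k+1}\subseteq \UU_m$ for $m\geq k+1$), I get $Z_{n+1}\in \UU_{n+1}$ and $Z_n\in \UU_n$ whenever $n>k$. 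Since $Y\in \UU$ and $\UU$ is homology-determined, $Y_n\in \UU_n$ and $Y_{n-1}\in \UU_{n-1}$. Thus the five-term exact sequence has outer entries in $\UU_{n+1}$, $\UU_n$, $\UU_n$, $\UU_{n-1}$, and Definition \ref{definition:NarrowSequence}(2) (with middle index $n$) forces $X_n\in \UU_n$.

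The main obstacle is the second inclusion: one has to recognize that the triangle defining $\VV$ produces exactly the five-term pattern appearing in the axiom of a narrow sequence, and that the degree restriction $n>k$ is precisely what is needed to place the outer homology terms of $Z$ in the correct members of the sequence $(\UU_m)_m$ via Corollary \ref{corollary:GrowingFastEnough}. Once this alignment is set up, the narrow sequence condition closes the argument immediately.
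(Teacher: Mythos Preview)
Your argument is correct. Both inclusions go through as you describe, and the key step—placing $Z_{n+1}\in\UU_{n+1}$ and $Z_n\in\UU_n$ via Corollary \ref{corollary:GrowingFastEnough} and then invoking Definition \ref{definition:NarrowSequence}(2)—is exactly right.

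Your route for the inclusion $\VV_n\subseteq\UU_n$ differs from the paper's. The paper first shows that $\VV$ is an aisle (via Proposition \ref{proposition:Chen}), reduces to the case where $X$ is a stalk complex in degree $n$, and then observes that in the triangle $U\to X\to T\to U[1]$ the object $T$ has homology only in degrees $n,n+1$; since these degrees exceed $k$, Corollary \ref{corollary:GrowingFastEnough} gives $T\in\UU$, and closure of $\UU$ under extensions finishes. You bypass the reduction to stalk complexes entirely and instead feed the five-term homology sequence directly into the narrow-sequence axiom. This is more economical: you do not need to know that $\VV$ is an aisle, and you work purely at the level of homologies. The paper's approach, on the other hand, makes the mechanism slightly more transparent by exhibiting $X[n]$ itself as an extension of two objects of $\UU$. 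Either way the essential content—Corollary \ref{corollary:GrowingFastEnough} pushing $\wide\UU_k$ into $\UU_n$ for $n>k$—is the same.
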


\begin{proof}
It follows from Proposition \ref{proposition:LeftAdjointAisles} that $\UU = (\UU \cap {}^\perp \TT) \ast (\UU \cap \TT)$ so that $\UU \subseteq \VV$.  In particular, we have $\UU_n \subseteq \VV_n$ for all $n \in \bZ$.  To show the other inclusion for $n \geq k$, note that $\VV$ is an aisle by Proposition \ref{proposition:Chen}.  In particular it is closed under retracts, so that it suffices to show that $X \in \VV$ implies $X \in \UU$ whenever $X$ is a stalk complex supported in degree $n > k$.

So let $X$ be as above.  Due to the definition of $\VV$ there is a triangle $U \to X \to T \to U[1]$ where $U \in \UU \cap {}^\perp \TT \subseteq \UU$ and $T \in \TT$.  Taking homologies, we get
$$0 \to T_{n+1} \to U_n \to X_n \to T_{n} \to U_{n-1} \to 0$$
so we can assume that $T_i \not= 0$ only for $i = n, n+1$.  Since $i \geq n > k$, we have $T_i \in \TT_i = \TT_k \subseteq \UU_i$ by Corollary \ref{corollary:GrowingFastEnough} so that $T \in \UU$.  Since $\UU$ is closed under extensions, we know that $X \in \UU$.  This finishes the proof.
\end{proof}
\section{Coreflective narrow sequences and aisles}\label{section:CoreflectiveNarrowSequence}

In this section, we will look at the relation between aisles and narrow sequences.  It has been shown in Proposition \ref{proposition:RestrictingAdjoints} that, if $\UU$ is an aisle, then every subcategory $\UU_n \subseteq \AA$ is coreflective, i.e. the embedding $\UU_n \longrightarrow \AA$ has a right adjoint.  A narrow sequence where all subcategories are coreflective, will be called a \emph{coreflective narrow sequence}.

Conversely, a coreflective narrow sequence does not always yield an aisle.  We will show in Theorem \ref{theorem:EnoughInjectives} that this is the case, however, when $\AA$ has enough injectives.

The following Proposition is an extension of Theorem \ref{theorem:NarrowPreaisle} to the case of coreflective narrow sequences and aisles.

\begin{proposition}\label{proposition:SequencesAisles}
Let $\AA$ be a hereditary abelian category.  The function $\theta$ from Definition \ref{definition:Theta} induces an injection
$$\begin{array}{lcr}
\left\{
\begin{array}{l}
\mbox{Coreflective narrow sequences $\{\NN(k)\}_k$ such that} \\
\mbox{$\Db (\wide \NN(k)) \to \Db \AA$ has a right adjoint}
\end{array}
\right\}&
\longrightarrow&
\left\{ \mbox{$t$-structures on $\Db \AA$} \right\}.
\end{array}$$
The function $\mu$ from Definition \ref{definition:Mu} induces an injection
$$\begin{array}{lcr}
\left\{ \mbox{$t$-structures on $\Db \AA$} \right\}&
\longrightarrow&
\left\{ \mbox{Coreflective narrow sequences in $\AA$} \right\}.
\end{array}$$
\end{proposition}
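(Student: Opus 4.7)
The strategy is to obtain both injections by packaging Theorem \ref{theorem:NarrowPreaisle} with the preservation results already established. Theorem \ref{theorem:NarrowPreaisle} gives a bijection between narrow sequences and homology-determined preaisles; the proposition is asserting that under this bijection, the subset of coreflective narrow sequences with the extra adjoint hypothesis lands inside the $t$-structures, and conversely that every aisle lands in the coreflective narrow sequences. The heart of the argument is therefore to verify that each map restricts correctly to these subsets, after which injectivity is automatic.

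For the map via $\theta$: fix a coreflective narrow sequence $(\NN(k))_k$ such that $\Db(\wide\NN(k)) \to \Db\AA$ admits a right adjoint for every $k$. By Theorem \ref{theorem:NarrowPreaisle}, $\UU = \theta((\NN(k))_k)$ is already a homology-determined preaisle, so I only need to promote this to an aisle. I would do this by invoking Corollary \ref{corollary:Pasting}: hypothesis (1) there is precisely the coreflectivity of each $\NN(k)$ in $\AA$, while hypothesis (2) is equivalent to the assumption that $\Db(\wide \NN(k)) \to \Db\AA$ has a right adjoint, via the identification $\thick_{\Db\AA}\NN(k) \cong \Db\wide\NN(k)$ recorded in the wide-and-thick subsection. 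Thus $\UU$ is an aisle and $\theta$ is well-defined into $t$-structures. For the map via $\mu$: let $\UU$ be any aisle. Proposition \ref{proposition:RetractsCoproducts} identifies $\UU$ as a homology-determined preaisle, so Theorem \ref{theorem:NarrowPreaisle} says $\mu(\UU) = (H_k\UU)_k$ is a narrow sequence, and Proposition \ref{proposition:RestrictingAdjoints} applied to the coreflective subcategory $\UU \subseteq \Db\AA$ shows each $H_k\UU$ is coreflective in $\AA$.

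Injectivity of both restrictions is then immediate: Theorem \ref{theorem:NarrowPreaisle} asserts the global identities $\mu \circ \theta = \id$ on narrow sequences and $\theta \circ \mu = \id$ on homology-determined preaisles, and these restrict to the respective sub-collections, so $\theta$ and $\mu$ are injections on the smaller sets. There is no real obstacle here; the only point requiring care is the bookkeeping identification $\thick_{\Db\AA}\NN(k) = \Db\wide\NN(k)$ that turns the stated adjoint hypothesis into the form demanded by Corollary \ref{corollary:Pasting}. All other content has been isolated in earlier results, which is why this proposition serves mainly as a stepping stone toward the sharper Theorem \ref{theorem:EnoughInjectives} where the enough-injectives assumption upgrades these injections into a bijection.
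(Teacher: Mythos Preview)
Your proof is correct and follows essentially the same approach as the paper: invoke Theorem~\ref{theorem:NarrowPreaisle} together with Corollary~\ref{corollary:Pasting} for the $\theta$-direction, and Theorem~\ref{theorem:NarrowPreaisle} together with Proposition~\ref{proposition:RestrictingAdjoints} for the $\mu$-direction. Your write-up is in fact more careful than the paper's, since you make explicit the role of Proposition~\ref{proposition:RetractsCoproducts}, the identification $\thick_{\Db\AA}\NN(k)\cong\Db\wide\NN(k)$, and the injectivity argument via the global identities $\mu\circ\theta=\id$ and $\theta\circ\mu=\id$.
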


\begin{proof}
It has been shown in Theorem \ref{theorem:NarrowPreaisle} that $\theta((\NN(k))_k)$ is a preaisle.  It follows from Corollary \ref{corollary:Pasting} that it is an aisle.

The last statement follows from Theorem \ref{theorem:NarrowPreaisle} combined with Proposition \ref{proposition:RestrictingAdjoints}.
\end{proof}

\begin{remark}\label{remark:NotEnoughInjectives}
The restricted functions $\theta, \mu$ do not yields bijections in general.  This is the case, for example, when $\AA$ is $\mod \bZ$, the category of finitely generated abelian groups.  We refer to Example \ref{example:NotEnoughInjectives}.
\end{remark}

An important special case will be when $\AA$ has enough injectives, or dually when $\AA$ has enough projectives.  We start with the following observation.

\begin{proposition}\label{proposition:EnoughInjectives}
Let $\AA$ be a hereditary category with enough injectives.  Let $\NN$ be a coreflective narrow subcategory of $\AA$.  Then $\NN$ has enough injectives and the right adjoint to the embedding $i:\NN \to \AA$ maps injectives in $\AA$ to $\NN$-injectives.  The wide closure $\wide \NN$ of $\NN$ has enough injectives and the embedding $\Db (\wide \NN) \to \Db \AA$ has a right adjoint.
\end{proposition}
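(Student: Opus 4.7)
The plan is to prove the four assertions in order, each feeding into the next. I would start with the second assertion: given an $\AA$-injective $J$, the object $J_\NN$ is $\NN$-injective. Since $\NN$ is closed under extensions and cokernels, Proposition~\ref{proposition:SplitExt} lets me prove this via split injectivity. For a monomorphism $\iota\colon J_\NN\to M$ with $M\in\NN$, injectivity of $J$ extends the counit $\epsilon_J\colon J_\NN\to J$ to some $h\colon M\to J$, and the adjunction $\Hom_\AA(M,J)\cong\Hom_\NN(M,J_\NN)$ (available because $M\in\NN$) produces $h'\colon M\to J_\NN$; uniqueness of the adjoint transpose, applied to $h\circ\iota=\epsilon_J$ whose transpose is $\id_{J_\NN}$, forces $h'\circ\iota=\id_{J_\NN}$, giving the required retraction. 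The first assertion is then immediate: for $X\in\NN$ any $\AA$-monomorphism $X\to J$ factors through the counit as $X\to J_\NN\to J$, so $X\to J_\NN$ is monic into an $\NN$-injective.

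For the third assertion, write $W=\wide\NN$. I would first show every $\NN$-injective $I$ is also $W$-injective. By Proposition~\ref{proposition:GeneratedInOneStep} any $W'\in W$ is the kernel of a map in $\NN$, and after passing to the image (Proposition~\ref{proposition:Images}) we obtain a short exact sequence $0\to W'\to X\to Y'\to 0$ with $X,Y'\in\NN$. The long exact sequence of $\Ext_\AA(-,I)$ combined with $\Ext^2_\AA=0$ (heredity) and the vanishing of $\Ext^1_\AA(X,I)$ and $\Ext^1_\AA(Y',I)$ (by $\NN$-injectivity) yields $\Ext^1_\AA(W',I)=0$. Enough $W$-injectives then follows by composing the embedding $W'\hookrightarrow X\in\NN$ with an $\NN$-embedding $X\hookrightarrow J_\NN$ from the first assertion.

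The main work is the last assertion. My proposed right adjoint $R\colon\Db\AA\to\Db W$ sends $A\in\AA$ to the two-term complex $R(A)=[(J^0)_\NN\to(J^1)_\NN]$ (in cohomological degrees $0$ and $1$) obtained by applying $(-)_\NN$ to an injective resolution $0\to A\to J^0\to J^1\to 0$. By the third assertion $R(A)$ is a bounded complex of $W$-injectives, hence $K$-injective in $\Db W$. The verification that $R(A)$ represents $\Hom_{\Db\AA}(-,A)|_{\Db W}$ hinges on the natural identification
\[
\Hom_\AA(W',(J^i)_\NN)\;\cong\;\Hom_\AA(W',J^i)\qquad (W'\in W),
\]
induced by the counit $(J^i)_\NN\to J^i$.

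This identity is the crux of the argument. To prove it I would, once again, use $0\to W'\to X\to Y'\to 0$ with $X,Y'\in\NN$ and compare the long exact sequences of $\Ext_\AA(-,(J^i)_\NN)$ and $\Ext_\AA(-,J^i)$: on $X$ and $Y'$ they are identified via the $\NN$-adjunction, and the two $\Ext^1$ terms vanish (by $\NN$-injectivity of $(J^i)_\NN$ and $\AA$-injectivity of $J^i$, respectively), so both Hom's from $W'$ arise as cokernels of the same map. Granted this, a chain-level computation on the $K$-injective complex $R(A)$ identifies $\Hom_{\Db W}(W',R(A))$ (a kernel) with $\Hom_\AA(W',A)$ and $\Hom_{\Db W}(W',R(A)[1])$ (a cokernel) with $\Ext^1_\AA(W',A)$. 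Heredity of $\AA$ extends $R$ to all of $\Db\AA$ via the splittings $X\cong\bigoplus_k H_k(X)[k]$. The main obstacle is the identity displayed above; everything else is routine bookkeeping.
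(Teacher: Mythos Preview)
Your proof is correct and follows essentially the same strategy as the paper: the paper handles the first three assertions almost identically (using split-injectivity for $(J)_\NN$ via the adjunction, then using Proposition~\ref{proposition:GeneratedInOneStep} to pass from $\NN$-injectives to $\WW$-injectives, though via a split-monomorphism argument rather than your Ext long exact sequence), and for the last assertion simply says ``since $\AA$ has enough injectives, we may derive the right adjoint of $\NN \to \AA$ to a right adjoint of $\Db(\wide\NN) \to \Db\AA$.'' Your explicit construction $R(A)=[(J^0)_\NN\to(J^1)_\NN]$ together with the key identity $\Hom_\AA(W',(J^i)_\NN)\cong\Hom_\AA(W',J^i)$ is exactly what is needed to make that one-line claim precise, so you have spelled out what the paper leaves to the reader.
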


\begin{proof}
Let $0 \to I_\NN \to M \to N \to 0$ be a short exact sequence in $\NN$.  Since the functor $\Hom(-,I_\NN) \cong \Hom(i-,I)$ maps this sequence to an exact sequence, we see that $\Hom(M,I_\NN) \to \Hom(I_\NN, I_\NN)$ is surjective and thus that the original short exact sequence splits.  This shows that $I_\NN$ is an $\NN$-split injective and \ref{proposition:SplitExt} shows it is $\NN$-injective.

To show that $\NN$ has enough injectives, let $N \in \NN$.  In $\AA$, there is a monomorphism $N \to I$ where $I$ is injective.  Since right adjoint functors preserve monomorphisms, there is a monomorphism $N_\NN \to I_\NN$.  Because $i$ is fully faithful we know that $N \cong N_\NN$, and we have already established that $I_\NN$ is injective in $\NN$.  We conclude that $\NN$ has enough injectives.

Let $I$ be an injective in $\NN$; we wish to show that $I$ is injective in $\WW$.  Consider a monomorphism $I \to W$ where $W \in \WW$.  By Proposition \ref{proposition:GeneratedInOneStep} there is a monomorphism $W \to N$ where $N \in \NN$.  The composition $I \to W \to N$ is a split monomorphism, and hence so is $I \to W$.  We conclude that injectives in $\NN$ are injectives in $\WW$.  It then follows from Proposition \ref{proposition:GeneratedInOneStep} that $\WW$ has enough injectives.

Furthermore, since $\AA$ has enough injectives, we may derive the right adjoint of $\NN \to \AA$ to a right adjoint of $\Db \WW \to \Db \AA$.
\end{proof}

We can now use Proposition \ref{proposition:EnoughInjectives} to describe a case where the maps in Proposition \ref{proposition:SequencesAisles} are bijections, obtaining a straightforward relation between a structure in the abelian category and a structure in the derived category.

\begin{theorem}\label{theorem:EnoughInjectives}
Let $\AA$ be a hereditary category with enough injectives.  Then $\theta, \mu$ induce bijections
$$\begin{array}{lcr}
\left\{ \mbox{$t$-structures on $\Db \AA$} \right\}&
\stackrel{\sim}{\longleftrightarrow}&
\left\{ \mbox{Coreflective narrow sequences in $\AA$} \right\}.
\end{array}$$
\end{theorem}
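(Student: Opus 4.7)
The plan is to combine the two previous results, Proposition \ref{proposition:SequencesAisles} and Proposition \ref{proposition:EnoughInjectives}, to remove the extra hypothesis in Proposition \ref{proposition:SequencesAisles}. Concretely, Proposition \ref{proposition:SequencesAisles} already provides two injections: $\theta$ from those coreflective narrow sequences $(\NN(k))_k$ with the additional property that $\Db(\wide \NN(k)) \to \Db \AA$ has a right adjoint, into $t$-structures; and $\mu$ in the reverse direction from $t$-structures into all coreflective narrow sequences. So it is enough to argue that, under the enough-injectives hypothesis, this extra property on the wide closures is automatic and that the two maps are mutually inverse.

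First I would note that the additional property is indeed automatic. Let $(\NN(k))_k$ be any coreflective narrow sequence in $\AA$. Since $\AA$ is hereditary with enough injectives and each $\NN(k) \subseteq \AA$ is coreflective, Proposition \ref{proposition:EnoughInjectives} applies to each $\NN(k)$ and yields, in particular, that the canonical functor $\Db(\wide \NN(k)) \to \Db \AA$ admits a right adjoint. Thus the domain of $\theta$ in Proposition \ref{proposition:SequencesAisles} coincides with the whole set of coreflective narrow sequences in $\AA$, and $\theta$ takes every coreflective narrow sequence to an aisle (equivalently, a $t$-structure).

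Next I would verify that $\theta$ and $\mu$ are mutually inverse on these sets. This is immediate from Theorem \ref{theorem:NarrowPreaisle}, which established that $\theta$ and $\mu$ are inverse bijections between narrow sequences in $\AA$ and homology-determined preaisles in $\Db \AA$. By Proposition \ref{proposition:RetractsCoproducts}, every aisle is a homology-determined preaisle, and by Proposition \ref{proposition:RestrictingAdjoints}, $\mu$ of an aisle is a coreflective narrow sequence. Combined with the previous paragraph, the restrictions of $\theta$ and $\mu$ to the indicated subsets are well-defined, mutually inverse maps.

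There is no real obstacle here: all the work has been done in the preceding sections. The only substantive point that might be easy to miss is the second conclusion of Proposition \ref{proposition:EnoughInjectives} about $\wide \NN(k)$, which is precisely what bridges the gap between the more general Proposition \ref{proposition:SequencesAisles} and the cleaner bijection stated here. The proof can therefore be a single paragraph that cites these facts in order.
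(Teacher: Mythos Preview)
Your proposal is correct and follows essentially the same approach as the paper: combine Proposition \ref{proposition:EnoughInjectives} with Proposition \ref{proposition:SequencesAisles} to see that the extra right-adjoint condition on $\Db(\wide \NN(k)) \to \Db \AA$ is automatic, and then invoke Theorem \ref{theorem:NarrowPreaisle} to conclude that $\theta$ and $\mu$ are mutually inverse. Your explicit mention of Propositions \ref{proposition:RetractsCoproducts} and \ref{proposition:RestrictingAdjoints} is fine but slightly redundant, since these are already absorbed into the statement of Proposition \ref{proposition:SequencesAisles}.
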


\begin{proof}
By combining Propositions \ref{proposition:SequencesAisles} and \ref{proposition:EnoughInjectives}, we see that the function $\theta$ induces the required bijection.  Theorem \ref{theorem:NarrowPreaisle} shows that $\mu$ is inverse to $\theta$.
\end{proof}

In some cases --as in the category of finitely generated modules over a Dedekind domain which we will discuss in \S\ref{section:Dedekind} below-- one does not have enough injectives, but enough projectives.  In these cases, one can apply the dual of Theorem \ref{theorem:EnoughInjectives}.  The dual concept of a narrow sequence in a hereditary category $\AA$ will be called a \emph{co-narrow sequence}, i.e. $(\CC(k))_{k \in \bZ}$ is a co-narrow sequence in $\AA$ if and only if $(\CC(-k)^\circ)_{k \in \bZ}$ is a narrow sequence in $\AA^\circ$.  A \emph{reflective co-narrow sequence} is a co-narrow sequence where each embedding $\CC(k) \to \AA$ has a left adjoint.  We then have the following.

\begin{corollary}\label{corollary:EnoughProjectives}
Let $\AA$ be a hereditary category with enough projectives.  Then $\theta, \mu$ induce bijections
$$\begin{array}{lcr}
\left\{ \mbox{$t$-structures on $\Db \AA$} \right\}&
\stackrel{\sim}{\longleftrightarrow}&
\left\{ \mbox{Reflective co-narrow sequences in $\AA$} \right\}.
\end{array}$$
\end{corollary}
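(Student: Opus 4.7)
The plan is to obtain the corollary as a formal dual of Theorem \ref{theorem:EnoughInjectives}, applying that theorem to the opposite category $\AA^\circ$ and then translating back. Since a category is hereditary precisely when its opposite is, and since $\AA$ has enough projectives if and only if $\AA^\circ$ has enough injectives, the hypotheses of Theorem \ref{theorem:EnoughInjectives} apply to $\AA^\circ$. This yields a bijection
$$\{\text{$t$-structures on $\Db \AA^\circ$}\} \;\stackrel{\sim}{\longleftrightarrow}\; \{\text{coreflective narrow sequences in $\AA^\circ$}\}.$$

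Next I would transport each side of this bijection. On the left, I would use the canonical triangulated equivalence $\Db(\AA^\circ) \cong (\Db\AA)^\circ$ together with the standard observation that a $t$-structure $(D^{\leq 0}, D^{\geq 1})$ on a triangulated category $\DD$ corresponds, under passage to $\DD^\circ$, to the $t$-structure whose aisle is $(D^{\geq 1})^\circ$ (with appropriate re-indexing by the sign change in the suspension); in particular, aisles in $\Db \AA$ correspond bijectively to aisles in $\Db \AA^\circ$. On the right, the definition of a co-narrow sequence in $\AA$ is tautologically the same as a narrow sequence in $\AA^\circ$ after the substitution $k \mapsto -k$, and a left adjoint to $\CC(k) \hookrightarrow \AA$ is a right adjoint to $\CC(-k)^\circ \hookrightarrow \AA^\circ$, so reflective co-narrow sequences in $\AA$ correspond bijectively to coreflective narrow sequences in $\AA^\circ$.

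It then remains to verify that under these identifications, the functions $\theta$ and $\mu$ used in Theorem \ref{theorem:EnoughInjectives} (applied to $\AA^\circ$) intertwine with the obvious dual definitions of $\theta$ and $\mu$ on the co-narrow side. This is immediate from the formulas: $\theta((\NN(k))_k)$ is defined homologically in terms of $H_k$, and the passage to $\AA^\circ$ sends $H_k$ of a complex in $\Db \AA$ to $H_{-k}$ of the corresponding complex in $\Db \AA^\circ$ (up to the opposite sign convention on shifts), matching exactly the re-indexing in the definition of a co-narrow sequence. The main --- and essentially only --- subtlety is bookkeeping the sign in the shift functor under the equivalence $\Db(\AA^\circ) \cong (\Db \AA)^\circ$ so that the indices $k$ in the narrow sequence on one side correspond to the indices $-k$ in the co-narrow sequence on the other; once this is done consistently, the bijection of the corollary follows directly.
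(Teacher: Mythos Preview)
Your proposal is correct and takes essentially the same approach as the paper: the paper simply states that this is the dual of Theorem~\ref{theorem:EnoughInjectives} (via the definition of a co-narrow sequence as a narrow sequence in $\AA^\circ$ after the reindexing $k\mapsto -k$) and gives no further proof. Your write-up spells out the bookkeeping of the equivalence $\Db(\AA^\circ)\cong(\Db\AA)^\circ$ and the sign change in the shift more explicitly than the paper does, but the underlying argument is identical.
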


\section{A further reduction}\label{section:LeftAdjoint}

In \S\ref{section:CoreflectiveNarrowSequence}, we showed that in some cases one can classify aisles by classifying coreflective narrow sequences.  In general however, the latter might not be easier to classify than the former.  In this section, we will discuss a special case where one can reduce the coreflective narrow sequence to better understood concepts.

In this section, let $\AA$ be an abelian hereditary category satisfying the following two properties.  For each coreflective narrow subcategory $\NN \subseteq \AA$ we have that
\begin{enumerate}
\item the lifting $\Db (\wide \NN) \to \Db \AA$ has a right adjoint, and
\item the lifting $\Db (\wide \NN) \to \Db \AA$ has a left adjoint.
\end{enumerate}
By Proposition \ref{proposition:EnoughInjectives}, the first property holds when $\AA$ has enough injectives.  It is well-known (see for example \cite{Krause11}) that the second property holds, for example, when $\AA \cong \mod A$ where $A$ is a finite dimensional hereditary $\bK$-algebra (where $\bK$ is a field).

By the first property, we know that the maps $\theta$ and $\mu$ give bijections between the class of $t$-structures in $\Db \AA$ and the class of coreflective narrow sequences in $\AA$ (see \ref{proposition:SequencesAisles}).

In order to describe the $t$-structures on $\Db \AA$, we will use the following definition.

\begin{definition}
Let $\AA$ be a hereditary category.  Let $\PW(\AA)$ be the poset of all coreflective wide subcategories.  A poset morphism $\bZ \to \PW(\AA)$ is called a \emph{$t$-sequence}.  The set of all $t$-sequences is denoted by $\PW(\AA)^\bZ$.
A \emph{refined $t$-sequence} consists of the following data: a $t$-sequence $f: \bZ \to \PW(\AA)$, and a function $t_f$ mapping $n \in \bZ$ to a tilting torsion class in $f(n) \cap {}^\perp f(n-1)$.  We denote the set of refined $t$-sequences by $\Delta(\AA)$.
\end{definition}

\begin{remark}
When $\AA$ is the category of finitely generated modules over a commutative noetherian ring $A$, then it has been shown in \cite{Stanley10} that the aisles are determined by a poset morphism, called a perversity function in \cite{ArinkinBezrukavnikov10}, from $\bZ$ to the poset of (not necessarily coreflective) wide subcategories in $\AA$.  In this situation, the only tilting torsion theory is the wide subcategory itself (\cite{StanleyWang11}).
\end{remark}

\subsection{\texorpdfstring{The map $\Xi$ from aisles to refined $t$-sequences}{The map from aisles to refined t-sequences}}

We start by defing a function $\Xi$, mapping an aisle in $\Db \AA$ to a refined $t$-structure in $\Delta(\AA)$.ing a function $\Xi$, mapping an aisle in $\Db \AA$ to a refined $t$-structure in $\Delta(\AA)$.

\begin{definition}\label{definition:Xi}
Let $\AA$ be a hereditary category as described in the beginning of this section.  We define a function
$$\Xi: \begin{array}{lcr}
\left\{ \mbox{Aisles in $\Db \AA$} \right\}&
\longrightarrow&
\Delta(\AA)
\end{array}$$
by $\Xi(\UU) = (f,t_f)$ where $f(n) = \wide (H_n \UU)$ and $t_f(n) = \UU_n \cap {}^\perp f(n-1)$.
\end{definition}

We need to show that this map is well-defined, i.e. that $\Xi(\UU) \in \Delta(\AA)$ for an aisle $\UU \subseteq \Db \AA$.  This follows from the following proposition.

\begin{proposition}
For each $n \in \bZ$, we have that $f(n)$ is a coreflective wide subcategory of $\AA$ and $t_f(n)$ is a tilting torsion class in $f(n) \cap {}^\perp f(n-1)$.
\end{proposition}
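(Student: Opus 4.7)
I would verify the two assertions in turn. First, since the aisle $\UU$ is homology-determined (Proposition~\ref{proposition:RetractsCoproducts}), Proposition~\ref{proposition:PreaisleNarrow} shows that $(\UU_k)_k$ is a narrow sequence, and Proposition~\ref{proposition:RestrictingAdjoints} gives that each $\UU_n$ is coreflective in $\AA$; thus $\UU_n$ is a coreflective narrow subcategory. The blanket assumption of \S\ref{section:LeftAdjoint} then supplies a right adjoint to $\Db f(n) = \Db(\wide \UU_n) \to \Db \AA$, and reapplying Proposition~\ref{proposition:RestrictingAdjoints} yields that $f(n) \hookrightarrow \AA$ is coreflective wide. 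That $f$ is a $t$-sequence, i.e.\ $f(n) \subseteq f(n+1)$, is immediate from $\UU_n \subseteq \UU_{n+1}$.

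For the torsion-class structure, set $\BB := f(n) \cap {}^\perp f(n-1)$. This is a wide subcategory of $\AA$ because ${}^\perp f(n-1)$ is closed under kernels, cokernels and extensions in the hereditary setting (\cite[Prop.~1.1]{GeigleLenzing91}). Closure of $t_f(n)$ under quotients and extensions is routine: $\UU_n$ is narrow, and ${}^\perp f(n-1)$ is closed under both operations by short exact sequence arguments. For the right adjoint of $t_f(n) \hookrightarrow \BB$, I would use that $\UU' := \UU \cap {}^\perp \Db f(n-1)$ is an aisle by Proposition~\ref{proposition:LeftAdjointAisles}, since both adjoints for $\Db f(n-1) \to \Db \AA$ are available under our hypotheses. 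Given $A \in \BB$, we have $A[n] \in {}^\perp \Db f(n-1)$, and Lemma~\ref{lemma:HereditaryAdjoints} confines $(A[n])_{\UU'}$ to homologies in degrees $n-1$ and $n$; the degree-$(n-1)$ piece lies in $\UU_{n-1} \cap {}^\perp f(n-1) \subseteq f(n-1) \cap {}^\perp f(n-1) = 0$, so $(A[n])_{\UU'} = V_n[n]$ with $V_n \in t_f(n)$, and the counit $V_n \to A$ is the required coreflection.

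The tilting condition is the main obstacle. Given $A \in \BB$, since $\UU_n$ is a tilting nullity class in $\wide \UU_n = f(n)$ (Corollary~\ref{corollary:NarrowIsPretorsion}), pick a monomorphism $A \hookrightarrow T'$ with $T' \in \UU_n$. Applying Lemma~\ref{lemma:AdjointInAisle} to $\UU$ with $\TT = \Db f(n-1)$, the object $W := (T'[n])_{{}^\perp \TT}$ lies in $\UU \cap {}^\perp \TT$ and fits into a triangle
$$W \longrightarrow T'[n] \longrightarrow Z \longrightarrow W[1]$$
with $Z = (T'[n])^\TT \in \Db f(n-1)$ supported in degrees $n$ and $n+1$ (dual of Lemma~\ref{lemma:HereditaryAdjoints}). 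The long exact sequence yields an injection $H_{n+1} Z \hookrightarrow H_n W$; since $H_{n+1} Z \in f(n-1)$ and $H_n W \in {}^\perp f(n-1)$, this map vanishes and $H_{n+1} Z = 0$. The sequence thus collapses to
$$0 \longrightarrow H_n W \longrightarrow T' \longrightarrow H_n Z \longrightarrow H_{n-1} W \longrightarrow 0,$$
identifying $H_n W$ with $\ker(T' \to H_n Z)$, and $H_n W \in \UU_n \cap {}^\perp f(n-1) = t_f(n)$. Because $A \in {}^\perp f(n-1)$ while $H_n Z \in f(n-1)$, the composition $A \hookrightarrow T' \to H_n Z$ is zero, so $A$ factors monically through $H_n W$, yielding the desired embedding $A \hookrightarrow H_n W \in t_f(n)$.
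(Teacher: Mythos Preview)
Your argument for the first assertion and for the coreflectivity of $t_f(n)$ in $\BB$ is correct and runs parallel to the paper's approach, which also rests on Proposition~\ref{proposition:LeftAdjointAisles} to produce the aisle $\UU' = \UU \cap {}^\perp \Db f(n-1)$.

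There is, however, a genuine gap in your tilting argument. You assert that the injection $H_{n+1}Z \hookrightarrow H_n W$ vanishes because $H_{n+1}Z \in f(n-1)$ and $H_n W \in {}^\perp f(n-1)$, but this uses the orthogonality in the wrong direction: membership in ${}^\perp f(n-1)$ kills morphisms \emph{to} $f(n-1)$, not \emph{from} it. In general $H_{n+1}Z$ need not vanish; the proof of Lemma~\ref{lemma:AdjointInAisle} treats exactly this triangle (with $k=n-1$) and makes no such claim. Your conclusion can nevertheless be salvaged. Set $K = \im(H_n W \to T') = \ker(T' \to H_n Z)$; the composition $A \hookrightarrow T' \to H_n Z$ vanishes as you note, giving a monomorphism $A \hookrightarrow K$. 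The surjection $H_n W \twoheadrightarrow K$ has kernel $H_{n+1}Z \in f(n-1)$, and now the \emph{correct} use of $A \in {}^\perp f(n-1)$ yields $\Ext^1(A, H_{n+1}Z) = 0$, so $A \hookrightarrow K$ lifts along $H_n W \twoheadrightarrow K$ to a map $A \to H_n W$, which is monic since its composition with $H_n W \to K$ is. The paper avoids this computation entirely: it observes that $\UU'$ is an aisle in $\Db({}^\perp f(n-1))$ and invokes Corollary~\ref{corollary:NarrowIsTorsion} to conclude that $\UU'_n = t_f(n)$ is automatically a tilting torsion class in its wide closure there.
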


\begin{proof}
That $f(n)$ is a coreflective wide subcategory follows directly from the first requirement on $\AA$ in this section, together with Proposition \ref{proposition:RestrictingAdjoints}.

Define $\VV = \UU \cap {}^\perp \thick_{n-1} \UU$.  Recall that it follows from Proposition \ref{proposition:RestrictingAdjoints} that $H_{n-1} \UU$ is a coreflective subcategory of $\AA$, so that the conditions on $\AA$ imposed in this section yield that $\thick H_{n-1} \UU \to \Db \AA$ has both a left and a right adjoint.  It then follows from Proposition \ref{proposition:LeftAdjointAisles} that $\VV$ is an aisle in $\Db \AA$, thus also in ${}^\perp \thick H_{n-1} \UU$.   It now follows from Corollary \ref{corollary:NarrowIsTorsion} that $\VV_n$ is a tilting torsion class in $({}^\perp \thick H_{n-1} \UU)_n = f(n) \cap {}^\perp f(n-1)$.
\end{proof}

\subsection{\texorpdfstring{The map $\Psi$ from refined $t$-sequences to aisles}{The map from refined t-sequences to aisles}}\label{subsection:Psi}

We will now define a map $\Psi$ from refined $t$-sequences to aisles which we will later prove to be inverse to the above map $\Xi$.  Since the map $\Xi$ ``cuts up'' an aisle into a refined $t$-sequence, the map $\Psi$ must ``glue'' the pieces back together.  This gluing is easiest to understand in the derived category.

Since we may have infinitely many categories to glue (the result must contain the categories $f(n-1)[n] \subset \Db \AA$ and $t_f(n)[n] \subset \AA$), we will describe $\Psi(f,t_f)$ using the ``approximating aisles'' $\VV(n,m)$, each of which is obtained by finitely many operations.  The aisles $\VV(n,m)$ approximate $\Psi(f,t_f)$ in the following sense: it is shown in Corollary \ref{corollary:GoodApproximation} below that $\Psi(f,t_f)_k = \VV(n,m)_k$ for all $n \leq k \leq m$.

For the actual description, fix a refined $t$-sequence $(f,t_f)$; we will start the construction of $\VV(n,m)$.  First, recall that $f(n)$ is a wide coreflective subcategory of $\AA$ and that the embedding lifts to a fully faithful functor $\Db f(n) \to \Db \AA$.  We will denote the essential image by $\TT(n)$; thus $\TT(n)$ is the thick subcategory of $\Db \AA$ such that all the homologies lie in $f(n)$.  By our assumptions $\TT(n)$ is also a (co)reflective thick subcategory of $\Db \AA$.

For each $n \in \bZ$, define the preaisle $\VV^{(n)} \subseteq \Db \AA$ by the narrow sequence
$$\NN^{(n)}(k) = \left\{\begin{array}{ll} 0 & k < n, \\ t_f(n) & k=n, \\ f(n) \cap {}^\perp f(n-1) & n < k. \end{array}\right.$$
thus $\VV^{(n)} = \theta((\NN^{(n)}(k))_k)$, where $\theta$ is as defined in Definition \ref{definition:Theta}.

We then define $\VV(n,m)$, where $n \leq m$, inductively by
$$\mbox{$\VV(n,n) = \VV^{(n)} \ast \TT(n-1)$ and $\VV(n,m+1) = \VV^{(m+1)} \ast \VV(n,m)$,}$$
thus $\VV(n,m) = \VV^{(m)} \ast \VV^{(m-1)} \ast \cdots \ast \VV^{(n)} \ast \TT(n-1)$.  Finally, let
$$\mbox{$\VV(n,\infty) = \cup_{m \geq n} \VV(n,m)$ and $\VV = \cap_{n \in \bN} \VV(n,\infty)$.}$$

Our goal is to show that $\VV$ is an aisle so that the following definition makes sense.  This will follow from Proposition \ref{proposition:VVisAisle} below.

\begin{definition}\label{definition:Psi}
Let $(f,t_f) \in \Delta(\AA)$.  We define a function 
\begin{eqnarray*}
\Psi: \Delta(\AA) &\longrightarrow& \{\mbox{Aisles in $\Db \AA$}\} \\
(f,t_f) &\mapsto& \VV
\end{eqnarray*}
where $\VV$ is as constructed above.
\end{definition}

\begin{lemma}\label{lemma:AreAisles}
Using the notations above, we have that
\begin{enumerate}
\item $\VV(n,m+1), \VV^{m+1}$ are aisles,
\item $\VV, \VV(n, \infty)$ are preaisle.
\end{enumerate}
\end{lemma}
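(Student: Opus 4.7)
The strategy is to establish the claims in this order: (i) each $\VV^{(m+1)}$ is an aisle; (ii) each $\VV(n,m+1)$ is an aisle, by induction on $m$; (iii) $\VV(n,\infty)$ and $\VV$ are preaisles.

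For (i), I plan to apply Corollary \ref{corollary:Pasting} to the sequence $(\NN^{(m+1)}(k))_k$. First I would verify this is a narrow sequence: the only nontrivial instance of Definition \ref{definition:NarrowSequence}(2) is at $k=m+1$, and this follows from $t_f(m+1)$ being a tilting torsion class in the wide subcategory $f(m+1) \cap {}^\perp f(m)$, hence closed under extensions and under quotients taken inside it. The hypotheses of Corollary \ref{corollary:Pasting} then reduce to showing that $f(m+1) \cap {}^\perp f(m)$ is a coreflective wide subcategory of $\AA$ whose derived embedding admits a right adjoint. The left adjoint assumption on $\AA$ gives a left adjoint to $\Db f(m) \to \Db \AA$, so by Proposition \ref{proposition:Perpendicular} the subcategory ${}^\perp \Db f(m)$ is coreflective in $\Db \AA$. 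Combined with the inclusion $\Db f(m) \subseteq \Db f(m+1)$ (so that the reflection of any $E \in \Db f(m+1)$ into $\Db f(m)$ stays inside $\Db f(m+1)$, and hence the triangle of Proposition \ref{proposition:Perpendicular} forces the projection of $E$ onto ${}^\perp \Db f(m)$ to remain in $\Db f(m+1)$), Proposition \ref{proposition:IntersectionAdjoint} yields that $\Db f(m+1) \cap {}^\perp \Db f(m)$ is coreflective in $\Db \AA$; the identification $\Db(\BB^\perp) = (\Db \BB)^\perp$ from Section \ref{subsection:Perpendicular} identifies this with $\Db(f(m+1) \cap {}^\perp f(m))$. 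Restricting to zeroth homology via Proposition \ref{proposition:RestrictingAdjoints} then delivers coreflectivity downstairs.

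For (ii), both the base case $\VV(n,n) = \VV^{(n)} \ast \TT(n-1)$ and the inductive step $\VV(n,m+1) = \VV^{(m+1)} \ast \VV(n,m)$ are handled by Proposition \ref{proposition:Chen}, provided the left factor sits in the left perpendicular of the right factor. Note that $\TT(n-1)$ is an aisle by the right adjoint assumption on $\AA$. The key orthogonality rests on the observation that $\VV(n,m) \subseteq \Db f(m)$: each $\VV^{(j)}$ with $j \leq m$ and $\TT(n-1) = \Db f(n-1)$ all lie inside the triangulated subcategory $\Db f(m)$, which is closed under $\ast$. On the other hand $\VV^{(m+1)} \subseteq \Db(f(m+1) \cap {}^\perp f(m)) \subseteq {}^\perp \Db f(m)$, and analogously $\VV^{(n)} \subseteq {}^\perp \Db f(n-1) = {}^\perp \TT(n-1)$ handles the base case. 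Proposition \ref{proposition:Chen} then yields both aisle structures.

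For (iii), observe that $\VV(n,m) \subseteq \VV(n,m+1)$ because $0 \in \VV^{(m+1)}$ gives $\VV(n,m) = 0 \ast \VV(n,m) \subseteq \VV^{(m+1)} \ast \VV(n,m) = \VV(n,m+1)$. Hence $\VV(n,\infty)$ is a directed union of aisles; closure under suspension is clear, and closure under extensions follows because any two objects in the union lie in a common $\VV(n,m)$. Finally, $\VV$ is an intersection of preaisles, hence itself a preaisle. I expect the main obstacle to be the coreflectivity verification in (i), since intersections of coreflective subcategories need not be coreflective, and Proposition \ref{proposition:IntersectionAdjoint} imposes a compatibility condition that must be checked by hand, using essentially the inclusion $\Db f(m) \subseteq \Db f(m+1)$ coming from monotonicity of the $t$-sequence $f$.
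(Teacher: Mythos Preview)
Your proposal is correct and aligns with the paper in parts (ii) and (iii): both you and the paper invoke Proposition \ref{proposition:Chen} once the orthogonality $\VV^{(m+1)} \subseteq {}^\perp \TT(m) \subseteq {}^\perp \VV(n,m)$ is observed (the paper cites Lemma \ref{lemma:PreaislesStar} to confirm the star is a preaisle), and both deduce the preaisle statements from the chain $\VV(n,m) \subseteq \VV(n,m+1)$.

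For (i) there is a genuine, if minor, difference in route. The paper bypasses Corollary \ref{corollary:Pasting} entirely: it simply factors the embedding as
\[
\VV^{(m+1)} \to \TT(m+1) \cap {}^\perp \TT(m) \to \TT(m+1) \to \Db \AA
\]
and asserts each arrow has a right adjoint, so the composite does too. The first arrow is coreflective because $\VV^{(m+1)}$ is precisely the aisle induced (as in \S\ref{subsection:Torsion}) by the tilting torsion class $t_f(m+1)$ inside $\Db(f(m+1) \cap {}^\perp f(m))$; the remaining two arrows use the standing hypotheses on $\AA$. Your approach via Corollary \ref{corollary:Pasting} is valid but more circuitous: you must first verify the narrow-sequence axioms for $(\NN^{(m+1)}(k))_k$ and then separately establish coreflectivity of $f(m+1) \cap {}^\perp f(m)$ in $\AA$ and of its thick closure in $\Db \AA$. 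The payoff is the same coreflectivity of $\TT(m+1) \cap {}^\perp \TT(m)$ that the paper's chain factorization also needs, so the underlying content is identical; the paper just packages it in a single composition rather than routing through the narrow-sequence machinery.
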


\begin{proof}
Note that since $\VV^{(m+1)} \subseteq {}^\perp \TT(m) \subseteq {}^\perp \VV(n,m)$ we see that $\VV(n,m+1)$ is a preaisle (see Lemma \ref{lemma:PreaislesStar}).  Since each of the embeddings
$$\VV^{(m+1)} \to \TT(m+1) \cap {}^\perp \TT(m) \to \TT(m+1) \to \Db \AA$$
has a right adjoint, so does the composition and hence $\VV^{(m+1)}$ is also an aisle in $\Db \AA$.  Because the $T(i)$ are also aisles, it follows from Proposition \ref{proposition:Chen} that $\VV(n,m+1)$ is an aisle in $\Db \AA$.

Note that $\VV(n,m) \subseteq \VV(n,m+1)$ so that $\VV(n,\infty)$ is a preaisle, and consequently so is $\VV$.
\end{proof}

\begin{remark}
Since $\TT(n-1), \VV^{(n)} \subseteq \TT(n)$, we have $\VV(n,m) \subseteq \VV(n+1,m)$ where $n < m$.  Thus when there are $n, m \in \bN$ such that $f(k) = f(n)$ for $k < n$ and $f(k) = f(m)$ for $nm \leq k$, then we have that $\VV = \VV(n,m)$.  In particular, $\VV$ is an aisle.
\end{remark}

\begin{proposition}\label{proposition:NoChange}
Let $n \leq k \leq m$.  With notations as above we have that
\begin{enumerate}
\item $\VV(n,m)_k = \VV(k,k)_k$, and
\item $\VV(n,m)_k$ is a tilting torsion class in $f(k)$.
\end{enumerate}
\end{proposition}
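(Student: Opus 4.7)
The plan is to prove part (1) in two stages --- a top truncation followed by an induction on $k-n$ --- and then deduce part (2).

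\emph{Stage 1 (Top truncation).} For $n \leq k < m$, given $X \in \VV(n,m) = \VV^{(m)} \ast \VV(n,m-1)$ via a triangle $U \to X \to V \to U[1]$, the definition $\NN^{(m)}(l) = 0$ for $l < m$ forces $H_k U = H_{k-1} U = 0$. The long exact sequence then yields $H_k X \cong H_k V$, whence $\VV(n,m)_k \subseteq \VV(n,m-1)_k$; the reverse is immediate from $\VV(n,m-1) \subseteq \VV(n,m)$. Iterating gives $\VV(n,m)_k = \VV(n,k)_k$ for all $m \geq k$.

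\emph{Stage 2 (Induction on $k-n$).} We prove $\VV(n,k)_k = \VV(k,k)_k$ by induction on $k-n \geq 0$, the base $n=k$ being vacuous. For the step, decomposing $A \in \VV(n,k)_k$ via a triangle $U \to A[k] \to V$ with $U \in \VV^{(k)}$ and $V \in \VV(n,k-1)$, and using $H_{k-1} U = 0$ in the long exact sequence, yields a short exact sequence
\[
0 \to B \to A \to W \to 0
\]
in $\AA$ with $B = \im(H_k U \to A) \in t_f(k)$ (a quotient of $H_k U \in t_f(k)$) and $W = H_k V \in (\VV(n,k-1))_k$. Conversely, any such short exact sequence lifts to a triangle $B[k] \to A[k] \to W[k]$ exhibiting $A[k] \in \VV(n,k)$, using that $\VV(n,k-1)$ is homology-determined. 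The same analysis applied to $\VV(k,k) = \VV^{(k)} \ast \TT(k-1)$ characterises $\VV(k,k)_k$ by the analogous short exact sequences with $W$ now ranging over $f(k-1) = H_k \TT(k-1)$. Hence the equality $\VV(n,k)_k = \VV(k,k)_k$ reduces to the identity $(\VV(n,k-1))_k = f(k-1)$. The inclusion $\subseteq$ is immediate from $\VV(n,k-1) \subseteq \TT(k-1)$, and for $\supseteq$, Corollary \ref{corollary:GrowingFastEnough} combined with the inductive hypothesis applied to $(n,k-1,k-1)$ gives $\wide(\VV(n,k-1))_{k-1} = \wide \VV(k-1,k-1)_{k-1} \subseteq (\VV(n,k-1))_k$.

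\emph{Wide-closure computation.} It remains to show $\wide \VV(k-1,k-1)_{k-1} = f(k-1)$. The short exact sequence characterisation places $f(k-2)$ and $t_f(k-1)$ inside $\VV(k-1,k-1)_{k-1}$, and Proposition \ref{proposition:GeneratedInOneStep} applied to the tilting class $t_f(k-1)$ yields $\wide t_f(k-1) = f(k-1) \cap {}^\perp f(k-2)$. The standing hypothesis of Section \ref{section:LeftAdjoint} provides both adjoints for $\TT(k-2) \hookrightarrow \Db \AA$ and $\Db f(k-1) \hookrightarrow \Db \AA$, hence by composition for $\TT(k-2) \hookrightarrow \Db f(k-1)$. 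The resulting semi-orthogonal decomposition, together with the dual of Lemma \ref{lemma:HereditaryAdjoints} bounding the homology of the reflected and coreflected pieces to two consecutive degrees, produces for each $A \in f(k-1)$ a $5$-term exact sequence
\[
0 \to R_1 \to Q_0 \to A \to R_0 \to Q_{-1} \to 0
\]
with $R_0, R_1 \in f(k-2)$ and $Q_0, Q_{-1} \in f(k-1) \cap {}^\perp f(k-2)$. Hence $A$ lies in $\wide\bigl(f(k-2) \cup (f(k-1) \cap {}^\perp f(k-2))\bigr) \subseteq \wide \VV(k-1,k-1)_{k-1}$, completing the induction.

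\emph{Part (2) and main obstacle.} Since $\VV(n,m)$ is an aisle (Lemma \ref{lemma:AreAisles}), Propositions \ref{proposition:PreaisleNarrow} and \ref{proposition:RestrictingAdjoints} give that $\VV(n,m)_k$ is a coreflective narrow subcategory of $\AA$; Corollary \ref{corollary:NarrowIsTorsion} then identifies it as a tilting torsion class in its wide closure, which the same semi-orthogonal argument shifted one level up (using $\TT(k-1) \hookrightarrow \Db \AA$ in place of $\TT(k-2)$) identifies as $f(k)$. The main obstacle is the extraction of the $5$-term exact sequence from the semi-orthogonal decomposition; this is precisely where the standing hypothesis that each $\TT(j) \hookrightarrow \Db \AA$ admits both adjoints plays its essential role.
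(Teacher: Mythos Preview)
Your Stage~1 (top truncation) is correct and matches the paper. The gap is in Stage~2: the assertion that $B = \im(H_k U \to A) \in t_f(k)$ ``as a quotient of $H_k U \in t_f(k)$'' is not justified and is in fact false in general. The class $t_f(k)$ is a torsion class only in $f(k) \cap {}^\perp f(k-1)$, not in $\AA$, so closure under quotients applies only to quotients taken inside $f(k) \cap {}^\perp f(k-1)$. Here the kernel of $H_k U \twoheadrightarrow B$ is $K \cong H_{k+1}V \in f(k-1)$, and from the short exact sequence $0 \to K \to H_k U \to B \to 0$ one gets $\Ext^1(B,Y) \cong \Hom(K,Y)$ for $Y \in f(k-1)$ (using $H_k U \in {}^\perp f(k-1)$), which is typically nonzero; hence $B \notin {}^\perp f(k-1)$ and $B \notin t_f(k)$.

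A concrete counterexample in $\rep(A_3\colon 1\to 2\to 3)$: take $f(k-1) = \add S_2$, $f(k) = \rep A_3$, so $f(k) \cap {}^\perp f(k-1) = \add\{S_3, M_{12}, M_{13}\}$, and choose $t_f(k) = \add\{M_{12}, M_{13}\}$. The triangle $M_{12}[k] \to S_1[k] \to S_2[k+1] \to M_{12}[k+1]$ (connecting map the inclusion $S_2 \hookrightarrow M_{12}$) exhibits $S_1 \in \VV(k,k)_k$, yet $S_1$ is simple and lies neither in $t_f(k)$ nor in $f(k-1)$, so no short exact sequence $0 \to B \to S_1 \to W \to 0$ with $B \in t_f(k)$ and $W \in f(k-1)$ exists. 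Your characterisation of $\VV(k,k)_k$ therefore fails, and with it the reduction of part~(1) to $(\VV(n,k-1))_k = f(k-1)$.

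The paper circumvents this by reversing the order: it first proves part~(2) by a separate induction on $m-n$ (the $5$-term argument you give is exactly the base case $m=n$), and only then deduces $\VV(n,k)_k = \VV(k,k)_k$ via Lemma~\ref{lemma:BigGluing}. That lemma, through Lemma~\ref{lemma:AdjointInAisle}, replaces the arbitrary $\ast$-decomposition $U \to A[k] \to V$ by the canonical one coming from the semi-orthogonal decomposition $({}^\perp\TT(k-1),\TT(k-1))$, and it is precisely this replacement that requires the left adjoint to $\TT(k-1) \hookrightarrow \Db\AA$ and does the work your short exact sequence argument cannot.
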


\begin{proof}
We start by showing that $\VV(n,m)_k = \VV(n,k)_k$.  It is clear that $\VV(n,m)_k \supseteq \VV(n,k)_k$ so that we only need to show the other inclusion.  Let $X \in \VV(n,m)_k$, thus there is a triangle
$$A \to X[k] \to B \to A[1]$$
where $A \in \VV^{(m)} \ast \VV^{(m-1)} \ast \cdots \ast \VV^{(k+1)}$ and $B \in \VV^{(k)} \ast \VV^{(k-1)} \ast \cdots \ast \VV^{(n)} \ast \TT(n-1) = \VV(n,k)$.  It is easy to show that $A_l \cong 0$ when $l \leq k$, so that by taking homologies, we see that $X \cong B_k$, and thus $X \in \VV(n,k)_k$.  This shows the other inclusion.

We will now show that $\VV(n,m)_k$ is a tilting torsion class in $f(k)$ where $n \leq k \leq m$.  By the first part of the proof, we may reduce to showing that $\VV(n,m)_m$ is a torsion class in $f(m)$.

We start with the case $m=n$.  Recall that that $\VV(n,n) = \VV^{(n)} \ast \TT(n-1)$ and that $\VV^{(n)}, \TT(n-1) \subseteq \TT(n)$ so that $\VV(n,n) \subseteq \TT(n)$.  In particular, $\VV(n,n)_n \subseteq f(n)$ and hence $\wide \VV(n,n)_n \subseteq f(n)$.  We want to show that $\wide \VV(n,n)_n = f(n)$.

We see that $\wide \VV(n,n)_n$ contains both $f(n-1) = \TT(n-1)_n$ and $\wide \VV^{(n)} = \wide t_f(n) = f(n) \cap {}^\perp f(n-1)$.  Let $X \in f(n)$.  In the derived category $\Db \AA$ we have a triangle given by Proposition \ref{proposition:Perpendicular}
$$A \to X[0] \to B \to A[1]$$
where $B \in \TT(n-1)$ and $A \in {}^\perp \TT(n-1)$.  Taking homologies, gives the exact sequence
$$0 \to B_1 \to A_0 \to X \to B_0 \to A_{-1} \to 0$$
where now $A_0,A_1 \in f(n) \cap {}^\perp f(n-1)$ and $B_{-1},B_0 \in f(n)$.  This shows that $X$ lies in the wide subcategory containing both $f(n)$ and $f(n) \cap {}^\perp f(n-1)$.  We conclude that indeed $\wide \VV(n,n)_n = f(n)$.

We have already established in Lemma \ref{lemma:AreAisles} that $\VV(n,n)$ is an aisle so that by Proposition \ref{proposition:SequencesAisles} we know that $\mu(\VV(n,n))$ is a coreflective narrow sequence and thus Corollary \ref{corollary:NarrowIsTorsion} yields that $\VV(n,n)_n=\NN(n)$ is a tilting torsion theory in $f(n)$.

To show that that $\VV(n,m)_m$ is a torsion class in $f(m)$ for $n \leq m-1$, we recall that we have already shown in the first part of the proof that $\VV(n,m)_{m-1} = \VV(n,m-1)_{m-1}$, so that one can easily use induction on $m-n$ to prove the general case.  This proves the second statement of the proposition.

For the first statement, we have that $\VV(n,m)_k = \VV(n,k)_k$ as in the first part of the proof.  Note that we have $\VV(n,k)_{k-1}$ is a tilting torsion class in $f(k-1)$ so that $\thick H_{k-1} \VV(n,k)$ coincides with $\TT(k-1)$.  It now follows from Lemma \ref{lemma:BigGluing} that $\VV(n,k)_k = \VV(k,k)_k$ as required.
\end{proof}

\begin{corollary}\label{corollary:GoodApproximation}
For each $n \leq k \leq m$ we have that $\VV_k = \VV(n,\infty)_k = \VV(n,m)_k$.
\end{corollary}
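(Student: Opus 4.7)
The plan is to reduce all three categories to the common value $\VV(k,k)_k$, using Proposition \ref{proposition:NoChange} as the central input. Throughout, we repeatedly use that aisles over a hereditary category are homology-determined (Proposition \ref{proposition:RetractsCoproducts}), which lets us freely pass between ``$X \in \CC_k$'' and ``$X[k] \in \CC$'' whenever $\CC$ is one of the aisles $\VV(n,m)$ (an aisle by Lemma \ref{lemma:AreAisles}).

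First I would handle $\VV(n,m)_k$. Proposition \ref{proposition:NoChange}(1) directly gives $\VV(n,m)_k = \VV(k,k)_k$ for $n \le k \le m$. Next, for $\VV(n,\infty)_k$, I would observe that the inductive definition $\VV(n,m+1) = \VV^{(m+1)} \ast \VV(n,m)$, combined with $0 \in \VV^{(m+1)}$, yields the chain $\VV(n,m) \subseteq \VV(n,m+1)$. Hence $\VV(n,\infty) = \bigcup_m \VV(n,m)$ is a union of a nested family of aisles, so $H_k$ commutes with the union and $\VV(n,\infty)_k = \bigcup_{m \ge n} \VV(n,m)_k$. For $m \ge k$ the previous step gives $\VV(n,m)_k = \VV(k,k)_k$, while the earlier terms $\VV(n,m)_k$ with $m<k$ sit inside $\VV(n,k)_k = \VV(k,k)_k$ by the inclusion $\VV(n,m) \subseteq \VV(n,k)$. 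This gives $\VV(n,\infty)_k = \VV(k,k)_k$ for all $n \le k$.

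For the equality $\VV_k = \VV(k,k)_k$, the inclusion $\subseteq$ is immediate from $\VV \subseteq \VV(n,\infty)$ together with the previous step (apply it with any $n \le k$). For $\supseteq$, let $X \in \VV(k,k)_k$; I need $X[k] \in \VV = \bigcap_{n' } \VV(n',\infty)$, i.e.\ $X[k] \in \VV(n',\infty)$ for every $n'$. I would split into two cases. If $n' \le k$, then by the previous paragraph $X \in \VV(n',\infty)_k$, so $X \in \VV(n',m')_k$ for some $m' \ge k$; since $\VV(n',m')$ is a homology-determined aisle and $X$ is concentrated in degree $k$, we get $X[k] \in \VV(n',m') \subseteq \VV(n',\infty)$. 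If instead $n' > k$, then by Proposition \ref{proposition:NoChange}(2) we have $X \in \VV(k,k)_k \subseteq f(k) \subseteq f(n'-1)$, so $X[k] \in \TT(n'-1) \subseteq \VV^{(n')} \ast \TT(n'-1) = \VV(n',n') \subseteq \VV(n',\infty)$.

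I do not expect any real obstacle: the whole corollary is a careful bookkeeping of how $H_k$ behaves under the unions, intersections, and $\ast$-products defining $\VV(n,m)$, $\VV(n,\infty)$ and $\VV$, with Proposition \ref{proposition:NoChange} doing all of the substantive work. The only subtle point is the case $n' > k$ in the last step, where one must notice that $\VV(k,k)_k \subseteq f(k) \subseteq f(n'-1)$ is enough to place $X[k]$ inside $\TT(n'-1)$, and hence inside $\VV(n',\infty)$; everything else is bookkeeping with homology-determined aisles.
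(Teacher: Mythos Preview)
Your proof is correct and supplies the details that the paper omits entirely (the corollary is stated without proof, as an immediate consequence of Proposition~\ref{proposition:NoChange}). Your reduction of all three quantities to $\VV(k,k)_k$ via Proposition~\ref{proposition:NoChange}(1) is exactly the intended mechanism, and you are right to avoid the potential circularity by passing through the finite-stage aisles $\VV(n',m')$ rather than invoking that $\VV(n',\infty)$ is an aisle (which is only established afterward in Proposition~\ref{proposition:VVisAisle}).

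One minor simplification: your case $n' > k$ can be absorbed into the case $n' \le k$ by using the monotonicity $\VV(n,\infty) \subseteq \VV(n+1,\infty)$, which follows from the inclusion $\VV(n,m) \subseteq \VV(n+1,m)$ noted in the Remark after Lemma~\ref{lemma:AreAisles}. Once you know $X[k] \in \VV(k,\infty)$, it lies in $\VV(n',\infty)$ for all $n' \ge k$ automatically. Your direct argument via $\VV(k,k)_k \subseteq f(k) \subseteq f(n'-1)$ and $X[k] \in \TT(n'-1) \subseteq \VV(n',n')$ is nonetheless valid and perhaps more self-contained.
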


\begin{proposition}\label{proposition:VVisAisle}
$\VV$ is an aisle in $\Db \AA$.
\end{proposition}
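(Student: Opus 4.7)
The plan is to reduce to Corollary~\ref{corollary:Pasting} by showing that $\VV$ is homology-determined and that its associated narrow sequence $\mu(\VV) = (\VV_k)_k$ satisfies the two adjoint hypotheses required there. The heavy lifting has already been done by Proposition~\ref{proposition:NoChange} and Corollary~\ref{corollary:GoodApproximation}, which identify $\VV_k$ with a tilting torsion class in $f(k)$; what remains is mostly bookkeeping plus one honest verification, namely homology-determinedness.

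First I would check that $\VV$ is homology-determined. For the forward direction, given $X \in \VV$ and $k \in \bZ$, I would fix $n$ and pick $m$ large enough so that $X \in \VV(n,m)$; since $\VV(n,m)$ is an aisle by Lemma~\ref{lemma:AreAisles}, it is closed under retracts by Lemma~\ref{lemma:AdjointRetract}, and in a hereditary category $H_k X[k]$ is a direct summand of $X$, hence $H_k X[k] \in \VV(n,m) \subseteq \VV(n,\infty)$. Since this holds for every $n$, we get $H_k X[k] \in \VV$. For the reverse direction, suppose $H_k X[k] \in \VV$ for all $k$. Because $X \in \Db \AA$ is bounded, only finitely many summands are nonzero in the decomposition $X \cong \bigoplus_k H_k X[k]$ valid in a hereditary category; the preaisle $\VV$ is closed under extensions, hence under finite direct sums, so $X \in \VV$.

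Next, Theorem~\ref{theorem:NarrowPreaisle} then gives $\VV = \theta(\mu(\VV))$ with $\mu(\VV) = (\VV_k)_k$ a narrow sequence. I would then verify the two hypotheses of Corollary~\ref{corollary:Pasting} for this sequence. For the first, Proposition~\ref{proposition:NoChange} combined with Corollary~\ref{corollary:GoodApproximation} shows that $\VV_k$ is a tilting torsion class in $f(k) = \wide \VV_k$; as a torsion class, $\VV_k \hookrightarrow f(k)$ has a right adjoint (Section~\ref{subsection:Torsion}), and $f(k) \hookrightarrow \AA$ has a right adjoint because $f(k)$ is coreflective by construction of a refined $t$-sequence, so composing gives the right adjoint to $\VV_k \hookrightarrow \AA$. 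For the second, $\thick \VV_k = \thick f(k) = \TT(k)$, and $\TT(k) \hookrightarrow \Db \AA$ has a right adjoint by the standing assumption (1) of this section on $\AA$.

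With both hypotheses of Corollary~\ref{corollary:Pasting} in hand, that corollary yields directly that $\VV = \theta(\mu(\VV))$ is an aisle in $\Db \AA$. The only real content beyond invoking earlier results is homology-determinedness, and I expect the backward direction of that equivalence to be the main (if mild) obstacle, since one must remember to use boundedness to keep the coproduct finite and thereby reduce closure under direct sums to closure under extensions.
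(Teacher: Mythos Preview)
Your proof is correct but takes a different route from the paper. The paper applies Corollary~\ref{corollary:UnionOfAisles} directly to the approximating aisles already built in \S\ref{subsection:Psi}: first the increasing union $\VV(n,\infty) = \bigcup_{m} \VV(n,m)$ is an aisle because the $\VV(n,m)$ are aisles (Lemma~\ref{lemma:AreAisles}) whose homologies stabilize degree-wise by Corollary~\ref{corollary:GoodApproximation}; then $\VV = \bigcap_{n} \VV(n,\infty)$ is handled by the same corollary. Your approach instead establishes that $\VV$ is homology-determined, identifies its narrow sequence $(\VV_k)_k$ via Proposition~\ref{proposition:NoChange} and Corollary~\ref{corollary:GoodApproximation}, and then feeds that sequence into Corollary~\ref{corollary:Pasting}. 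Since Corollary~\ref{corollary:Pasting} is itself proved through Proposition~\ref{proposition:UltimatePasting} and ultimately Corollary~\ref{corollary:UnionOfAisles}, your argument is in effect a longer path through the same machinery, with the homology-determinedness verification as an extra (though routine) step. What your route buys is that it makes the narrow-sequence description of $\VV$ explicit and records $\wide \VV_k = f(k)$ along the way, which is conceptually clean and anticipates exactly what is used in Lemma~\ref{lemma:XiPsi}; the paper's route is simply shorter.
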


\begin{proof}
It follows from Corollary \ref{corollary:GoodApproximation} that we can apply Corollary \ref{corollary:UnionOfAisles} to see that $\VV(n,\infty)$ and subsequently $\VV$ are aisles in $\Db \AA$.
\end{proof}

\subsection{\texorpdfstring{Aisles are classified by refined $t$-sequences}{Aisles are classified by refined t-sequences}}  In this subsection, we will show that aisles in $\Db \AA$ are classified by refined $t$-sequences (see Theorem \ref{theorem:Reduced}).

\begin{lemma}\label{lemma:PsiXi}
For each aisle $\UU$ in $\Db \AA$, we have $\Psi \circ \Xi (\UU) = \UU$.  
\end{lemma}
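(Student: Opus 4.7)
My plan is to exploit that both $\UU$ and $\VV := \Psi\circ\Xi(\UU)$ are aisles (by assumption and by Proposition \ref{proposition:VVisAisle}) and hence homology-determined by Proposition \ref{proposition:RetractsCoproducts}. Consequently, proving $\UU=\VV$ reduces to showing $\UU_k = \VV_k$ inside $\AA$ for every $k \in \bZ$. By Corollary \ref{corollary:GoodApproximation}, $\VV_k = \VV(k,k)_k$, where $\VV(k,k) = \VV^{(k)} \ast \TT(k-1)$. Writing $(f,t_f)=\Xi(\UU)$, I recall that $\NN^{(k)}(k) = t_f(k) = \UU_k \cap {}^\perp f(k-1)$, that $\NN^{(k)}(l) = f(k)\cap {}^\perp f(k-1)$ for $l > k$, and that $\TT(k-1)=\Db f(k-1)$ is the thick subcategory whose homologies lie in $f(k-1)$.

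For the inclusion $\VV_k \subseteq \UU_k$, I would take any $Y \in \VV(k,k)$ sitting in a triangle $A \to Y \to B \to A[1]$ with $A \in \VV^{(k)}$ and $B \in \TT(k-1)$, and then compute $H_k$ via the long exact sequence. Since $H_{k-1}A = 0$, one obtains an exact sequence $H_{k+1}B \to H_k A \to H_k Y \to H_k B \to 0$ in $\AA$ with $H_k A \in t_f(k) \subseteq \UU_k$ and $H_{k+1}B, H_k B \in f(k-1)$. By Corollary \ref{corollary:GrowingFastEnough} we have $f(k-1) = \wide \UU_{k-1} \subseteq \UU_k$, so both outer terms belong to $\UU_k$, and because $\UU_k$ is a narrow subcategory (closed under cokernels and extensions) we conclude $H_k Y \in \UU_k$.

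The reverse inclusion $\UU_k \subseteq \VV_k$ is the more delicate step. Starting from $X \in \UU_k$ (so $X[k] \in \UU$), I would apply Proposition \ref{proposition:Perpendicular} to $\TT(k-1) \to \Db \AA$ (which has both adjoints by the standing assumptions) to produce a triangle $A' \to X[k] \to B' \to A'[1]$ with $A' \in {}^\perp \TT(k-1)$ and $B' \in \TT(k-1)$; Lemma \ref{lemma:AdjointInAisle} then guarantees $A' \in \UU$. To place $A'$ in $\VV^{(k)}$ I will analyze its homologies degree by degree. The proposition at the end of Section \ref{subsection:Perpendicular} gives $H_l A' \in {}^\perp f(k-1)$ for every $l$. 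For $l < k$, $H_l A' \in \UU_l \subseteq f(l) \subseteq f(k-1)$, hence $H_l A' \in f(k-1) \cap {}^\perp f(k-1) = 0$. For $l = k$, $H_k A' \in \UU_k \cap {}^\perp f(k-1) = t_f(k)$, as required.

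The crux of the argument is the case $l > k$: here I will use that $X[k]$ has no homology in degree $l$, so the long exact sequence yields a surjection $H_{l+1} B' \twoheadrightarrow H_l A'$. Since $H_{l+1}B' \in f(k-1)$ and $f(k-1)$ is wide (hence closed under quotients), this forces $H_l A' \in f(k-1)$; combined with $H_l A' \in {}^\perp f(k-1)$ this gives $H_l A' = 0$. Therefore $A'$ is a stalk complex concentrated in degree $k$ with $H_k A' \in t_f(k)$, so $A' \in \VV^{(k)}$ and $X[k] \in \VV^{(k)} \ast \TT(k-1) = \VV(k,k)$, proving $X \in \VV_k$. The main obstacle I anticipate is precisely the vanishing of the higher homologies of $A'$; without heredity (so that $f(k-1)\cap{}^\perp f(k-1)=0$) and without the stalk concentration of $X[k]$, $A'$ need not land in $\VV^{(k)}$. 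Once these two ingredients are recognized, everything else reduces to standard manipulations with triangles and narrow subcategories.
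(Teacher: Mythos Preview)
Your proof is correct and follows essentially the same route as the paper. Both arguments reduce to $\VV_k=\UU_k$ via homology-determination, identify $\VV_k$ with $\VV(k,k)_k$ through Corollary~\ref{corollary:GoodApproximation}, and then analyze the triangle $A'\to X[k]\to B'\to A'[1]$ coming from the decomposition along $\TT(k-1)$. The paper packages this last step as an appeal to Lemma~\ref{lemma:BigGluing} (applied with $k-1$ in place of $k$), whereas you carry out the homology computation of $A'$ by hand; your argument for $l>k$ via the surjection $H_{l+1}B'\twoheadrightarrow H_lA'$ is a valid alternative to invoking Lemma~\ref{lemma:HereditaryAdjoints} directly. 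The only minor remark is that the paper's citation of Lemma~\ref{lemma:BigGluing} is literally about $(\UU\cap{}^\perp\TT(k-1))\ast\TT(k-1)$ rather than $\VV^{(k)}\ast\TT(k-1)$, and bridging the two requires precisely the observation you make explicit: that $A'=(X[k])_{{}^\perp\TT(k-1)}$ is concentrated in degree $k$ and hence already lies in $\VV^{(k)}$. So your version is, if anything, a little more self-contained.
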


\begin{proof}
We write $\Xi (\UU)= (f,t_f)$ and $\VV = \Psi(f,t_f)$.  We want to show that $\VV_k = \UU_k$ for each $k \in \bZ$.  Using the notations of Section \ref{subsection:Psi}, this is equivalent to showing that $\VV(k,\infty)_k = \UU_k$ by Proposition \ref{proposition:NoChange}.  This then follows from Lemma \ref{lemma:BigGluing}.
\end{proof}

\begin{lemma}\label{lemma:XiPsi}
For each refined $t$-structure $(f,t_f) \in \Delta(\AA)$ we have $\Xi \circ \Psi (f,t_f) = (f,t_f)$.
\end{lemma}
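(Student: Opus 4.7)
Write $\VV = \Psi(f, t_f)$ and $(f', t_{f'}) = \Xi(\VV)$. The plan is to verify that $f'(n) = f(n)$ and $t_{f'}(n) = t_f(n)$ for every $n \in \bZ$. For the first equality, Corollary \ref{corollary:GoodApproximation} gives $\VV_n = \VV(n,n)_n$, and Proposition \ref{proposition:NoChange} shows this is a tilting torsion class in $f(n)$. Since any tilting nullity class in a wide subcategory has that wide subcategory as its wide closure (combining Proposition \ref{proposition:GeneratedInOneStep} with the tilting property: any $W \in f(n)$ embeds into some $T \in \VV_n$, and is then realized as the kernel of a map $T \to T'$ in $\VV_n$), we conclude $f'(n) = \wide(H_n \VV) = \wide \VV_n = f(n)$.

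For the second equality, which reads $\VV_n \cap {}^\perp f(n-1) = t_f(n)$, one inclusion is immediate from the construction of $\Psi$: since $\NN^{(n)}(n) = t_f(n)$, we have $t_f(n)[n] \subseteq \VV^{(n)} \subseteq \VV(n,n)$, so $t_f(n) \subseteq \VV(n,n)_n = \VV_n$ by Corollary \ref{corollary:GoodApproximation}, while $t_f(n) \subseteq {}^\perp f(n-1)$ is built into the definition of $\Delta(\AA)$.

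For the reverse inclusion, the plan is to take $Y \in \VV_n \cap {}^\perp f(n-1)$, choose $X \in \VV(n,n) = \VV^{(n)} \ast \TT(n-1)$ with $H_n X \cong Y$, and work with a defining triangle $A \to X \to B \to A[1]$. Since $A \in \VV^{(n)}$ forces $H_{n-1} A = 0$, the long exact homology sequence collapses to $H_{n+1} B \to H_n A \to Y \to H_n B \to 0$, yielding a short exact sequence $0 \to D \to Y \to H_n B \to 0$ with $D$ a quotient of $H_n A \in t_f(n)$ (so $D \in t_f(n)$, torsion classes being closed under quotients) and $H_n B \in f(n-1)$ (since $B \in \TT(n-1)$). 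Applying $\Hom(-, W)$ for $W \in f(n-1)$ and using heredity to terminate the resulting six-term exact sequence, the vanishing of $\Hom$ and $\Ext$ out of both $Y$ and $D$ into $f(n-1)$ forces $\Hom(H_n B, W) = \Ext(H_n B, W) = 0$. Hence $H_n B \in {}^\perp f(n-1)$, and since also $H_n B \in f(n-1)$, the identity on $H_n B$ is zero, so $H_n B = 0$ and $Y \cong D \in t_f(n)$.

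The main obstacle is this last homology-and-perpendicular chase: extracting a usable short exact sequence from the triangle and exploiting that $H_n B$ sits simultaneously in $f(n-1)$ and in its own perpendicular to force its vanishing. Everything else follows by direct appeal to the already-established results of the preceding section.
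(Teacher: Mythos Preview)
Your proof is correct. The first half, identifying $f'(n)=f(n)$ via Corollary \ref{corollary:GoodApproximation} and Proposition \ref{proposition:NoChange}, is exactly what the paper does.

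For the second half, the paper takes a shorter route. Rather than chasing homology, it observes directly that
\[
\VV_k \cap {}^\perp f(k-1) \;=\; (\VV^{(k)} \ast \TT(k-1))_k \cap {}^\perp f(k-1) \;=\; (\VV^{(k)})_k \cap {}^\perp f(k-1) \;=\; t_f(k).
\]
The middle equality is the only nontrivial step: since $\VV(k,k)$ is an aisle, one may take $X = Y[k]$, and then the triangle $A \to Y[k] \to B$ with $A \in \VV^{(k)} \subseteq {}^\perp \TT(k-1)$ and $B \in \TT(k-1)$ is the canonical one from Proposition \ref{proposition:Perpendicular}. As $Y \in {}^\perp f(k-1)$ forces $Y[k] \in {}^\perp \TT(k-1)$, the component $B$ vanishes outright, and $Y = H_k A \in t_f(k)$.

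Your argument reaches the same conclusion by a longer but more elementary path: you do not invoke the uniqueness of the perpendicular decomposition, and instead squeeze $H_n B$ between $f(n-1)$ and ${}^\perp f(n-1)$ via the six-term $\Ext$-sequence. This is perfectly valid and arguably more self-contained, at the cost of the extra verification that $D \in t_f(n)$ (which, as you note, uses that $f(n) \cap {}^\perp f(n-1)$ is wide so that the image $D$ stays inside it, and then closure of the torsion class under quotients). The paper's approach trades that bookkeeping for a single appeal to Proposition \ref{proposition:Perpendicular}.
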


\begin{proof}
Write $\VV = \Psi(f,t_f)$.  It follows from Corollary \ref{corollary:GoodApproximation} that $\wide \VV_n = \wide \VV(n,n)_n$ and then by Proposition \ref{proposition:NoChange} that $\wide \VV_n = f(n)$.

Next we show that $t_f(k) = \VV_k \cap {}^\perp f(k-1)$.  Here again, we use Proposition \ref{proposition:NoChange} to see that
\begin{align*}
\VV_k \cap {}^\perp f(k-1) &= \VV(k,k)_k \cap {}^\perp f(k-1) \\
&= (\VV^{(k)} \ast \TT(k-1))_k \cap {}^\perp f(k-1) \\
&= (\VV^{(k)})_k \cap {}^\perp f(k-1) \\
&= t_f(k) \cap {}^\perp f(k-1) = t_f(k),
\end{align*}
which finishes the proof.
\end{proof}

Lemmas \ref{lemma:PsiXi} and \ref{lemma:XiPsi} prove the following result.

\begin{theorem}\label{theorem:Reduced}
Let $\AA$ be a hereditary category such that every for coreflective wide subcategory $\WW$ of $\AA$, the canonical functors $\Db \WW \to \Db \AA$ has a left and a right adjoint.  Then the functions $\Xi$ and $\Psi$ from Definitions \ref{definition:Xi} and \ref{definition:Psi} respectively yield bijections
$$\begin{array}{lcr}
\{\mbox{$t$-structures on $\Db \AA$}\}&
\stackrel{\sim}{\longleftrightarrow}&
\Delta(\AA).
\end{array}$$
\end{theorem}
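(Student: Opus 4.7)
The plan is to deduce the theorem as an immediate formal consequence of Lemmas \ref{lemma:PsiXi} and \ref{lemma:XiPsi}, together with the well-definedness of $\Xi$ and $\Psi$ established earlier in the section. Concretely, I would first confirm that $\Xi(\UU) \in \Delta(\AA)$ for every aisle $\UU$ (which is the content of the proposition following Definition \ref{definition:Xi}, relying on Proposition \ref{proposition:RestrictingAdjoints}, Proposition \ref{proposition:LeftAdjointAisles}, and Corollary \ref{corollary:NarrowIsTorsion}), and that $\Psi(f,t_f)$ is indeed an aisle (which is Proposition \ref{proposition:VVisAisle}, obtained from the approximating aisles $\VV(n,m)$ and the union argument in Corollary \ref{corollary:UnionOfAisles}). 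Having these two well-defined functions between the relevant sets, the theorem is the statement that they are mutually inverse.

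I would then simply invoke Lemma \ref{lemma:PsiXi}, which gives $\Psi \circ \Xi = \id$ on aisles in $\Db \AA$, and Lemma \ref{lemma:XiPsi}, which gives $\Xi \circ \Psi = \id$ on $\Delta(\AA)$. Together these two identities say exactly that $\Xi$ and $\Psi$ establish inverse bijections between $t$-structures on $\Db \AA$ (equivalently aisles in $\Db \AA$) and refined $t$-sequences, which is the content of the theorem. No further argument is needed at this final step: it is a one-line bookkeeping conclusion from the two preceding lemmas.

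The true work and main obstacles lie earlier, in the construction and verification of $\Psi$ rather than in the theorem itself. The hardest step is checking that the recursively defined $\VV(n,m) = \VV^{(m)} \ast \cdots \ast \VV^{(n)} \ast \TT(n-1)$ and their union $\VV(n,\infty)$, and finally $\VV = \cap_n \VV(n,\infty)$, actually behave well: that $\VV(n,m)_k$ stabilises in the sense of Proposition \ref{proposition:NoChange} and Corollary \ref{corollary:GoodApproximation}, that each partial gluing is an aisle via Proposition \ref{proposition:Chen}, and that passing to limits preserves the aisle property via Corollary \ref{corollary:UnionOfAisles}. Once this machinery is in place, the identity $\Psi(\Xi(\UU))_k = \UU_k$ in Lemma \ref{lemma:PsiXi} follows from Lemma \ref{lemma:BigGluing} applied to the splittings $\UU = (\UU \cap {}^\perp \TT(k-1)) \ast (\UU \cap \TT(k-1))$, and the identity $\Xi(\Psi(f,t_f)) = (f,t_f)$ of Lemma \ref{lemma:XiPsi} amounts to reading off $\wide \VV_n = f(n)$ and $\VV_n \cap {}^\perp f(n-1) = t_f(n)$ directly from the construction. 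With these technical ingredients secured, the theorem becomes a clean formal corollary.
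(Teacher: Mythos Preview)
Your proposal is correct and matches the paper's own proof exactly: the paper simply states that Lemmas \ref{lemma:PsiXi} and \ref{lemma:XiPsi} prove the result, which is precisely the one-line bookkeeping conclusion you describe. Your additional commentary on where the real work lies (well-definedness of $\Psi$, the stabilisation in Proposition \ref{proposition:NoChange}, etc.) accurately reflects the structure of the preceding subsections.
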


\begin{remark}
The main application of Theorem \ref{theorem:Reduced} above is when $\AA$ is the category of finite dimensional representations of a finite dimensional hereditary algebra.  We then obtain a complete description of the $t$-structures when $A$ is either representation finite of tame.  For the representation finite case, we refer the reader also to \cite{KellerVossieck88, LiuVitoria11, Parthasarathy88}.
\end{remark}
\section{The unbounded case}

Most results in this article are valid if one replaces $\Db \AA$ by $D^* \AA$, where $*=+,-,\emptyset$, and the proofs carry over in a straightforward manner.  We will instead use the following proposition to translate the obtained results for $\Db \AA$ to $D^* \AA$.

\begin{proposition}
There are bijections
$$\{\mbox{Homology-determined preaisles on $\Db \AA$}\} \longleftrightarrow \{\mbox{Homology-determined preaisles on $D^* \AA$}\}$$
and
$$\{\mbox{Aisles on $\Db \AA$}\} \longleftrightarrow \{\mbox{Aisles on $D^* \AA$}\}.$$
\end{proposition}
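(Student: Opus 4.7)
The plan is to factor both bijections through the class of narrow sequences in $\AA$, following the template of Theorem \ref{theorem:NarrowPreaisle}.

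First I observe that the constructions $\mu$ and $\theta$ from Definitions \ref{definition:Mu} and \ref{definition:Theta} make sense verbatim with $\Db\AA$ replaced by $D^*\AA$, since they only involve the triangulated and homological structure. The proofs of Propositions \ref{proposition:PreaisleNarrow} and \ref{proposition:NarrowPreaisle} carry over without change, yielding an analogous bijection between narrow sequences in $\AA$ and homology-determined preaisles in $D^*\AA$. Composing with Theorem \ref{theorem:NarrowPreaisle} then produces the first claimed bijection: it sends $\UU \subseteq \Db\AA$ to the unique homology-determined preaisle in $D^*\AA$ with the same narrow sequence as $\UU$, and sends $\VV \subseteq D^*\AA$ to $\VV \cap \Db\AA$. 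That these are mutually inverse uses the observation that any homology-determined preaisle $\VV$ satisfies $H_k(\VV) = H_k(\VV \cap \Db\AA)$, since $X \in \VV$ implies $H_k(X)[k] \in \VV \cap \Db\AA$.

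For the aisle bijection the task is to check that the first bijection restricts to one between aisles. Given an aisle $\UU \subseteq \Db\AA$, let $\UU^* \subseteq D^*\AA$ denote the homology-determined preaisle corresponding to the narrow sequence $\mu(\UU)$. For an object $A \in D^*\AA$, I use the hereditary splitting $A \cong \bigoplus_k H_k(A)[k]$ (well-defined in $D^*\AA$ because distinct summands live in distinct cohomological degrees) and define
$$A_{\UU^*} := \bigoplus_k \bigl(H_k(A)[k]\bigr)_\UU,$$
where each summand is computed using the right adjoint for $\UU \subseteq \Db\AA$. By Lemma \ref{lemma:HereditaryAdjoints} each summand has homology concentrated in degrees $k$ and $k-1$, so $A_{\UU^*}$ is a well-defined complex in $D^*\AA$ with homology a finite direct sum in each degree, and it lies in $\UU^*$ because each $(H_k(A)[k])_\UU \in \UU$. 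To verify the universal property, for $X \in \UU^*$ the hereditary Hom decomposition together with the fact that each $H_i(X)[i]$ lies in $\UU$ yields
$$\Hom_{D^*\AA}(X, A_{\UU^*}) \cong \prod_{k,i} \Hom_{\Db\AA}\bigl(H_i(X)[i], (H_k(A)[k])_\UU\bigr) \cong \Hom_{D^*\AA}(X, A).$$

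For the converse direction, suppose $\VV \subseteq D^*\AA$ is an aisle and set $\UU = \VV \cap \Db\AA$. For $A \in \Db\AA$ the object $A_\VV \in \VV$ is computed by the same formula, but now as a finite coproduct $\bigoplus_{k=a}^b (H_k(A)[k])_\VV$ since $A$ is bounded. Lemma \ref{lemma:HereditaryAdjoints} again ensures each term has homology in at most two degrees, so $A_\VV \in \Db\AA$. Hence $(-)_\VV$ restricts to a right adjoint of $\UU \hookrightarrow \Db\AA$, so $\UU$ is an aisle. The main subtlety is making Lemma \ref{lemma:HereditaryAdjoints} do the heavy lifting of transporting adjoints across the truncation, and this is where the hereditary hypothesis on $\AA$ is essential; given this control, mutual inverseness of the two aisle-level constructions follows from the first paragraph.
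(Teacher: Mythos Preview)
Your proof is correct and follows essentially the same route as the paper. Both arguments identify the bijection via the common narrow sequence (equivalently, matching homologies), and both transfer the right adjoint between $\Db\AA$ and $D^*\AA$ by using the hereditary splitting $A \cong \bigoplus_k H_k(A)[k]$ together with Lemma~\ref{lemma:HereditaryAdjoints} to control where the homologies of each $(H_k(A)[k])_\UU$ live. The paper writes the candidate adjoint as a product $\prod_m (X_m[m])_\UU$ rather than a coproduct, but in this hereditary setting the two coincide, and your observation that at most two summands contribute to each homology degree is exactly what makes either description a well-defined object of $D^*\AA$. Your converse direction is slightly more explicit than the paper's ``the other direction is similar'': you spell out that for bounded $A$ the object $A_\VV$ remains bounded, again via Lemma~\ref{lemma:HereditaryAdjoints}, which is precisely the point.
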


\begin{proof}
Both bijections are given by relating $\UU \subseteq \Db \AA$ and $\VV \subseteq D^* \AA$ when $\UU_n = \VV_n \subseteq \AA$ for all $n \in \bZ$.  It is clear that this is a bijection between homology-determined preaisles; this proves the first statement.

For the second statement, we need to show that the above correspondence maps aisles in $\Db \AA$ to aisles in $D^* \AA$ and vice versa.  Thus let $\UU$ be an aisle in $\Db \AA$, and let $\VV$ be the corresponding homology-determined preaisle in $D^* \AA$.  For $V \in \VV$ we have natural isomorphisms
\begin{eqnarray*}
\Hom_{D^* \AA}(V,X) &\cong& \Hom_{D^* \AA}(\coprod_n V_{n}[n], \prod_m X_m[m]) \\
&\cong& \prod_{n,m} \Hom_{D^* \AA}(V_n[n],X_m[m]) \\
&\cong& \prod_{n,m} \Hom_{\Db \AA}(V_n[n],X_m[m]) \\
&\cong& \prod_{n,m} \Hom_{\Db \AA}(V_n[n],(X_m[m])_\UU) \\
&\cong& \prod_{n,m} \Hom_{D^* \AA}(V_n[n],(X_m[m])_\UU) \\
&\cong& \Hom_{D^* \AA}(\coprod_n V_{n}[n], \prod_m (X_m[m])_\UU) \\
&\cong& \Hom_{D^* \AA}(V, \prod_m (X_m[m])_\UU)
\end{eqnarray*}
so that, since $\prod_m (X_m[m])_\UU \in D^* \AA$, we may infer that $\VV \to D^* \AA$ has a right adjoint.  The other direction is similar.
\end{proof} 

\begin{corollary}
Theorems \ref{theorem:NarrowPreaisle}, \ref{theorem:EnoughInjectives}, and \ref{theorem:Reduced} holds verbatim after changing the bounded derived categories to the left/right bounded or unbounded derived categories.
\end{corollary}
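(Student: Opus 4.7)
The plan is to deduce each of Theorems \ref{theorem:NarrowPreaisle}, \ref{theorem:EnoughInjectives}, and \ref{theorem:Reduced} for $D^{\ast}\AA$ by composing the bijections in those theorems with the correspondence supplied by the preceding proposition. That proposition matches $\UU \subseteq \Db\AA$ with $\VV \subseteq D^{\ast}\AA$ precisely when $\UU_n = \VV_n$ in $\AA$ for every $n \in \bZ$, so by construction the correspondence is compatible with $\mu$, and its inverse is compatible with $\theta$.

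For Theorem \ref{theorem:NarrowPreaisle}, I would compose $\theta \colon \{\text{narrow sequences in } \AA\} \to \{\text{hom.-det.\ preaisles in } \Db\AA\}$ with the proposition's bijection on homology-determined preaisles. The resulting bijection sends a narrow sequence $(\NN(k))_{k\in\bZ}$ to the preaisle $\{X \in D^{\ast}\AA : H_k X \in \NN(k) \text{ for all } k \in \bZ\}$, with inverse $\VV \mapsto (H_n\VV)_{n\in\bZ}$; these are precisely the natural $D^{\ast}$-versions of $\theta$ and $\mu$.

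For Theorems \ref{theorem:EnoughInjectives} and \ref{theorem:Reduced}, the combinatorial targets (coreflective narrow sequences, respectively refined $t$-sequences in $\Delta(\AA)$) live entirely inside $\AA$ and hence are indifferent to the choice of bounded versus unbounded derived category. The hypotheses of the two theorems are conditions on $\AA$ alone, except for the existence of left/right adjoints to $\Db\wide\NN \to \Db\AA$ in Theorem \ref{theorem:Reduced}; here I would observe that because $\AA$ is hereditary, the stalk-decomposition $X \cong \prod_k X_k[k] \cong \coprod_k X_k[k]$ allows one to promote adjoints on $\Db$ to adjoints on $D^{\ast}$ by applying them degreewise and then reassembling, and conversely an adjoint on $D^{\ast}$ restricts to one on $\Db$. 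Hence the hypothesis translates freely between the two settings, and composing each theorem's bijection with the aisle-bijection from the preceding proposition yields the desired $D^{\ast}$-statement.

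The only potentially delicate step is the verification that the right adjoint to $\VV \hookrightarrow D^{\ast}\AA$ transferred from $\Db\AA$ really does represent $\Hom_{D^{\ast}\AA}(-, X)\big|_{\VV}$, which is exactly the product-of-stalks $\Hom$-calculation already performed in the proof of the preceding proposition. Once that calculation is in hand, the three desired bijections are formal compositions of the proposition with Theorems \ref{theorem:NarrowPreaisle}, \ref{theorem:EnoughInjectives}, and \ref{theorem:Reduced}, and no further obstacle remains.
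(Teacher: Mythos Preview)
Your proposal is correct and follows exactly the route the paper intends: the corollary is presented without proof, as an immediate consequence of composing the bijections of the preceding proposition with those of Theorems \ref{theorem:NarrowPreaisle}, \ref{theorem:EnoughInjectives}, and \ref{theorem:Reduced}. Your additional remark on translating the adjoint hypothesis in Theorem \ref{theorem:Reduced} via the stalk decomposition is a reasonable elaboration of a point the paper leaves implicit.
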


We will end this section with an example concerning homology-determined preaisles in the unbounded derived category, which illustrates that homology-determined preaisles in $\Db \AA$ is typically not a homology-determined preaisle in $D^* \AA$ (although it will still be a preaisle).

\begin{example}\label{example:BadPreaisles2}
Consider the preaisle $\UU = \Db \mod k$ in $D \mod k$.  We have $H_k \UU \cong \mod k$ and $\theta ((H_k \UU)_{k \in \bZ}) \cong D \mod k \not\cong \Db \mod k$.  The preaisle $\UU$ is not homology-determined.  When considering $\UU$ as a preaisle in $\Db \mod k \subset D \mod k$, it is indeed a homology-determined preailse (as $\UU$ is even an aisle in $\Db \mod k$).
\end{example}
\section{Application: finitely presented modules of a Dedekind domain}\label{section:Dedekind}

Let $R$ be a commutative Dedekind domain, thus $R$ is an integrally closed noetherian (commutative) domain of Krull dimension at most one.  In Theorem \ref{theorem:Dedekind} below we will describe the $t$-structures on the category $\Db \mod R$, recovering a special case of \cite[Theorem 6.9]{TarrioLeovigildoJeremiasSaorin10}.

We start with a description of the category of finitely presented modules $\mod R$ (see for example \cite[Theorems 10.14 and 10.15]{Jacobson89} and \cite[Proposition 7.18]{McConnellRobson01}).

\begin{theorem}\label{theorem:DedekindClassification}
Every finitely presented module $X$ is isomorphic to $P \oplus T$ where $T$ is the torsion submodule of $X$ and $P$ is projective.
Furthermore, $T \cong \bigoplus_{i=1}^n R/ \pp_i^{n_i}$ for some nonzero prime ideal $\pp_i$ and $n_i > 1$.  These $(\pp_i,n_i)$ are uniquely determined up to permutations.
Every projective module $P$ is isomorphic to a direct sum of rank one projective modules.  If $I$ is a rank one projective module, then $I \oplus I \cong R \oplus I^2$.
\end{theorem}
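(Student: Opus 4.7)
The plan is to prove each clause of the theorem in turn using standard commutative-algebra techniques, exploiting that $R$ is a one-dimensional regular domain whose localizations at nonzero primes are discrete valuation rings.

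First, I would establish the splitting $X \cong P \oplus T$. The torsion submodule $T$ is well defined since $R$ is a domain, and $X/T$ is a finitely generated torsion-free $R$-module. Over a Dedekind domain any such module is projective: this can be proven by localizing at each prime (where torsion-free over a DVR equals free) together with a patching/rank argument, or cited from Jacobson/McConnell--Robson. Since $X/T$ is projective, the short exact sequence $0 \to T \to X \to X/T \to 0$ splits, yielding $X \cong P \oplus T$ with $P := X/T$ projective.

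Second, for the torsion part, I would observe that $\Ann_R(T)$ is a nonzero ideal and hence admits a unique factorization $\pp_1^{m_1} \cdots \pp_r^{m_r}$ into prime powers. The Chinese Remainder Theorem decomposes $T$ into its primary components $T = \bigoplus T_{\pp_i}$, where each $T_{\pp_i}$ is a module over the localization $R_{\pp_i}$ (a DVR, hence PID). By the classical structure theorem over a PID, $T_{\pp_i} \cong \bigoplus_{j} R_{\pp_i}/\pp_i^{n_{ij}} R_{\pp_i} \cong \bigoplus_j R/\pp_i^{n_{ij}}$, the last isomorphism because $R/\pp^n$ is already local at $\pp$. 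Uniqueness of the multiset $\{(\pp_i, n_{ij})\}$ follows from the uniqueness in the PID case combined with the primary decomposition.

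Third, for projectives, I would induct on the rank. A finitely generated projective $P$ of rank $n \geq 1$ admits a surjection onto a rank-one projective $I$ (a fractional ideal): one way is to choose any nonzero linear functional $P \to R$, whose image is a nonzero ideal $I$, which is automatically invertible and hence projective. Since $I$ is projective the surjection splits, yielding $P \cong P' \oplus I$ with $\rk P' = n-1$; iterating gives $P \cong I_1 \oplus \cdots \oplus I_n$ with each $I_j$ rank one. The Steinitz relation $I \oplus I \cong R \oplus I^2$ is a special case of $I \oplus J \cong R \oplus IJ$: choose $a \in I$ and $b \in J$ with $aJ + bI = IJ$ (possible since $I, J$ are invertible, by Chinese remainders arranging that the local generators differ), define $I \oplus J \to IJ$ by $(x,y) \mapsto bx - ay$ and $I \oplus J \to R$ by $(x,y) \mapsto (x/a) + (y/b)$ in a suitable fraction field, and check these fit into a short exact sequence $0 \to IJ \to I \oplus J \to R \to 0$ which splits since $R$ is free.

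The main obstacles are of a bookkeeping nature rather than conceptual: arranging the elements $a \in I, b \in J$ in Steinitz so that the maps are well defined and exact, and verifying uniqueness of the prime-power decomposition of $T$ across all primary components simultaneously. Since the result is classical and the paper cites it directly, invoking \cite{Jacobson89} and \cite{McConnellRobson01} for the detailed verifications is appropriate.
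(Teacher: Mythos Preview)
The paper does not prove this theorem at all: it is stated as background with the parenthetical ``(see for example \cite[Theorems 10.14 and 10.15]{Jacobson89} and \cite[Proposition 7.18]{McConnellRobson01})'' and then used without further argument. Your proposal, by contrast, sketches an actual proof along the standard lines (torsion-free finitely generated implies projective over a Dedekind domain, primary decomposition plus the PID structure theorem for the torsion part, and the Steinitz isomorphism for rank-one projectives). The sketch is essentially correct and is exactly the kind of argument one finds in the cited references; your closing remark that one may simply invoke Jacobson and McConnell--Robson is in fact precisely what the paper does. So there is no discrepancy to flag beyond noting that the paper treats the result as a black box while you have supplied the contents.
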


\begin{remark}\label{remark:Projective}
Note that $\Ext(X,X) = 0$ implies that $X$ is a projective object.  Indeed, if $X$ were not projective then $X$ has a direct summand of the form $R/ \pp_i^{n_i}$ and since 
$$0 \to R/ \pp_i^{n_i} \to R/ \pp_i^{2n_i} \to R/ \pp_i^{n_i} \to 0$$
is a nonsplit exact sequence we see that $\Ext(R/ \pp_i^{n_i},R/ \pp_i^{n_i}) \not= 0$.
\end{remark}

\begin{theorem}\label{theorem:Dedekind}
Let $R$ be a Dedekind domain.  All aisles (and hence $t$-structures) in $\Db \mod R$ are given by a co-narrow sequence
$$\CC(k) = \left\{ \begin{array}{ll} \mod R & k < n \\ \CC & k=n \\ 0 & k > n \end{array}\right.$$
where $\CC \subseteq \mod R$ is a nonzero torsionfree class, and where $n \in \bZ \cup \{\pm \infty\}$.  Moreover, there is an order inversing bijection
\begin{align*}
\{\mbox{Torsionfree classes in $\mod R$}\} &\longrightarrow \{\mbox{Specialization closed subsets of $\Spec R$}\} \\
\CC & \mapsto \Supp(\CC^{\perp_0}).
\end{align*}
\end{theorem}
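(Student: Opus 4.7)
The plan is to apply Corollary~\ref{corollary:EnoughProjectives}: since $R$ is Dedekind, $\mod R$ is hereditary and, by Theorem~\ref{theorem:DedekindClassification}, has enough projectives. Hence $t$-structures on $\Db \mod R$ correspond bijectively to reflective co-narrow sequences in $\mod R$, and it suffices to enumerate these.

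First I would classify wide subcategories. Using $I \oplus I \cong R \oplus I^2$ for a rank-one projective $I$, any wide subcategory $\WW$ containing a nonzero projective contains $R$ (as a direct summand of $I \oplus I$), and closure under kernels and cokernels of maps $R^m \to R^n$ then gives $\WW = \mod R$. Otherwise $\WW$ lies inside the torsion subcategory $\TT \cong \bigoplus_\pp \TT_\pp$, and using the uniserial structure of each $\TT_\pp$ one checks that $\WW = \TT_S$ for some set $S$ of closed points of $\Spec R$. Dualizing Corollary~\ref{corollary:NarrowIsPretorsion}, a co-narrow subcategory $\CC$ is closed under subobjects and extensions in its wide closure and generates that closure under epimorphisms; combining with the above, the co-narrow subcategories of $\mod R$ are the zero category, each $\TT_S$, and, for each set $W$ of closed points, the torsionfree class $\FF_W$ consisting of projectives together with torsion modules whose support avoids $W$. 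Reflectivity in $\mod R$ rules out $\TT_S$ for $S \ne \emptyset$, since $\Hom_R(R,-)|_{\TT_S}$ is the identity functor on $\TT_S$ and cannot be represented by any finitely generated torsion object (no uniform bound on exponent). Every $\FF_W$ is reflective, the reflection of $M$ being the quotient by its $W$-torsion submodule.

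Next I would apply the sequence condition. By the dual of Proposition~\ref{proposition:KernelInNext}, every map $f\colon E \to D$ with $E, D \in \CC(k)$ has $\coker f \in \CC(k-1)$. Taking $E = D = R$ and $f$ multiplication by $\pi \in \pp$ yields a cokernel $R/(\pi)$ supported at $\pp$. If $\CC(k) = \FF_{W_k}$ with $W_k \ne \emptyset$, then varying $\pp \in W_k$ produces cokernels no $\FF_{W_{k-1}}$ with $W_{k-1} \ne \emptyset$ can absorb, forcing $\CC(k-1) = \mod R$. Symmetrically, if $\CC(k+1)$ were a nonzero $\FF_{W_{k+1}}$, then $R \in \CC(k+1)$ would yield cokernels outside $\CC(k) = \FF_{W_k}$, so $\CC(k+1) = 0$. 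Iterating, $\CC(j) = \mod R$ for $j < k$ and $\CC(j) = 0$ for $j > k$, yielding the claimed form with $n \in \bZ \cup \{\pm \infty\}$ encoding the degenerate cases. I expect this sequence-condition step to be the main obstacle, particularly the verification that the full five-term condition of Definition~\ref{definition:NarrowSequence} is satisfied by the displayed sequences, which follows by a direct diagram chase using that $\FF_W$ is closed under subobjects and extensions.

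For the closing bijection, direct computation gives $\FF_W^{\perp_0} = \TT_W$, so $\Supp(\FF_W^{\perp_0}) = W$; the remaining torsionfree class $\CC = 0$ has $0^{\perp_0} = \mod R$ with support $\Spec R$. Since the specialization closed subsets of $\Spec R$ are precisely the subsets of its closed points together with $\Spec R$ itself, this exhausts the bijection, which is order-reversing because $\FF_W \supseteq \FF_{W'}$ iff $W \subseteq W'$.
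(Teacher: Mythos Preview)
Your argument is correct and follows the same overall framework as the paper: invoke Corollary~\ref{corollary:EnoughProjectives} to reduce to reflective co-narrow sequences, then show that any such sequence collapses to the displayed form via the dual of Proposition~\ref{proposition:KernelInNext}. The difference lies in how you establish the key fact that a nonzero reflective co-narrow subcategory $\CC$ of $\mod R$ contains $R$.

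The paper does this in one stroke without classifying wide or co-narrow subcategories: since $\Hom(R,-)$ is exact, the reflection $R^{\CC}$ is $\CC$-projective (dual of \S\ref{section:Injectives}), hence $\Ext(R^{\CC},R^{\CC})=0$, hence $R^{\CC}$ is projective in $\mod R$ by Remark~\ref{remark:Projective}; then the relation $I\oplus I\cong R\oplus I^{2}$ and closure under sums and summands force $R\in\CC$. You instead classify all co-narrow subcategories explicitly through the wide lattice and then eliminate the torsion ones $\TT_{S}$ by a direct non-representability argument for $\Hom(R,-)|_{\TT_{S}}$. Your route is longer but more self-contained: it yields the wide and co-narrow classifications as byproducts, and it lets you compute the final bijection with specialization-closed subsets directly rather than citing \cite{StanleyWang11} as the paper does. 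The paper's route, conversely, shows that one never needs those classifications---the single observation about $R^{\CC}$ suffices.

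One small point: in your non-representability argument you should say that $\Hom(P,R/\pp^{n})$ stabilizes as $n$ grows (its length is bounded by the $\pp$-length of $P$), whereas $R/\pp^{n}$ does not; ``no uniform bound on exponent'' is the right idea but would benefit from being stated this way.
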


\begin{proof}
There is a bijection between the torsion classes of $\mod R$ and the torsionfree classes, given by mapping a torsion class $\TT \subseteq \mod R$ to $\TT^{\perp_0}$.  The given order preserving bijection between torsionfree classes in $\mod R$ and the specialization closed subsets of $\Spec R$, is given in \cite[Theorem 4]{StanleyWang11}.

Let $\CC$ be a torsionfree class in $\mod R$.  Since $\Hom(R,-) |_\CC$ is an exact functor, the object $R^\CC$ is $\CC$-projective (as in Section \ref{section:Injectives}) and thus $\Ext(R^\CC, R^\CC) = 0$.   As in Remark \ref{remark:Projective}, we see that $R^\CC$ is a projective object in $\mod R$.  Note that $R$ is a generator of $\mod R$ so that $R^\CC = 0$ implies that $\CC = 0$.  In the case where $R^\CC$ is nonzero, we know that $R^\CC$ is a nonzero projective object in $\mod R$ and since $\CC$ is closed under direct sums and summands, it follows from the classification in Theorem \ref{theorem:DedekindClassification} that $R \in \CC$.  The required result then follows from the dual of Proposition \ref{proposition:KernelInNext}.
\end{proof}

\begin{remark}
Since $R$ is a Gorenstein ring, the category $\Db \mod R$ has a dualizing complex.  Our Theorem \ref{theorem:Dedekind} then is a corollary of \cite[Theorem 6.9]{TarrioLeovigildoJeremiasSaorin10}.
\end{remark}

\begin{remark}
The cases $n = \pm \infty$ correspond to the degenerate $t$-structures on $\Db \mod R$.
\end{remark}

\begin{remark}
Since $\bZ$ is a Dedekind domain, one can use Theorem \ref{theorem:Dedekind} to classify all $t$-structures on the category $\mod \bZ$ of finitely presented abelian groups by sets of prime numbers.  It has been shown in \cite[Corollary 8.4]{Stanley10} that there is a proper class of $t$-structures in the unbounded derived category $D \Mod \bZ$ (and hence also in $\Db \Mod \bZ$) of all abelian groups.
\end{remark}

The following example shows that not every coreflective narrow sequence yields an aisle (see Remark \ref{remark:NotEnoughInjectives}).

\begin{example}\label{example:NotEnoughInjectives}
Let $(\NN(k))_{k \in \bZ}$ be a coreflective narrow sequence in $\mod \bZ$ given by
$$\NN(k) = \mbox{full subcategory of finite groups.}$$
Then $\NN(k) \to \mod \bZ$ has a right adjoint, but $(\NN(k))_{k \in \bZ}$ is not of the form described in Theorem \ref{theorem:Dedekind}.  Note that $\wide (\NN(k)) = \NN(k)$ but $\Db (\wide \NN(k))$ is not a coreflective subcategory in $\Db \mod \bZ$.
\end{example}

\section{Application: coherent sheaves on a smooth projective curve}\label{section:ApplicationCurves}

Let $\bX$ be a smooth projective curve over a field $\bK$.  We will discuss the $t$-structures on $\Db \coh \bX$.  Our main result is the same as \cite{GorodentsevKuleshovRudakov04} but our methods are different.

Recall (see for example \cite{Hartshorne77}) that $\coh \bX$ is an Ext-finite noetherian hereditary category.  We will recall some properties of $\coh \bX$ that we will use.

\begin{itemize}
\item Every object in $\coh \bX$ is the direct sum of a locally free sheaf and a torsion sheaf.  Furthermore, one has $\Hom(\GG, \FF) = 0$ when $\FF$ is a locally free sheaf and $\GG$ has finite length.  Using Serre duality, one has $\Hom(-,\GG \otimes \omega_X) \cong \Ext(\GG,-)^*$ and $\Hom(\GG,-) \cong \Ext(-,\GG \otimes \omega_X)^*$ where $(-)^*$ denotes the vector space dual and $\omega_X$ is the dualizing sheaf.
\item For every coherent sheaf $X \in \coh \bX$, one defines the rank $\rk X$ and the degree $\deg X$.  A sheaf is a torsion sheaf if and only if $\rk X = 0$; in this case $\deg X > 0$.
\item The Riemann-Roch formula can be written as
$$\chi(X,Y) = \begin{pmatrix} \rk X & \deg X \end{pmatrix}
\begin{pmatrix} 1-g & 1 \\-1 & 0 \end{pmatrix}
\begin{pmatrix} \rk Y \\ \deg Y \end{pmatrix}$$
where $\chi(X,Y) = \dim \Hom(X,Y) - \dim \Ext(X,Y).$
\end{itemize}

We will restrict ourselves to the case where the genus $g \geq 1$; the case $g=0$ has been handled in Proposition \ref{proposition:AislesProjectiveLine}.  It then follows from the Riemann-Roch formula that $\dim_k \Ext(X,X) \geq 1$, thus every object has self-extensions.

Our main result is Theorem \ref{theorem:AislesCurves} below which states that $t$-structures are classified by torsion classes.

\begin{lemma}\label{lemma:NarrowOnCurves}
Let $\NN$ be a coreflective narrow subcategory of $\coh \bX$.  If $\NN$ contains a locally free sheaf, then $\NN$ contains all torsion sheaves.
\end{lemma}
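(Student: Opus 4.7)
The plan is to exploit coreflectivity of $\NN$. Fix a closed point $p \in \bX$ and let $K = (k(p))_\NN \in \NN$ with counit $\epsilon \colon K \to k(p)$. Since $\FF \in \NN$ is locally free of positive rank, the adjunction gives $\Hom(\FF, K) \cong \Hom(\FF, k(p)) \neq 0$, so $\epsilon$ is nonzero, and then surjective because $k(p)$ is simple. On the smooth curve $\bX$ we may write $K = V \oplus T$ uniquely as a direct sum of a locally free sheaf $V$ and a torsion sheaf $T$, using that $\Ext^1(V,T)=0$ on a smooth curve.

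Next, I would apply the adjunction to $K$ itself to obtain a $k$-linear isomorphism $\End(K) \cong \Hom(K, k(p))$. Expanding this using $K = V \oplus T$, and noting that $\Hom(T, V) = 0$ on a smooth curve, yields the dimension identity
\[
\dim \End V + \dim \End T + \dim \Hom(V, T) = \rk V + \dim \Hom(T, k(p)).
\]
A short case-by-case count, based on the inequalities $\dim \End T_q \geq \dim \Hom(T_q, k(p))$ (with equality only for $T_q \in \{0, k(q)\}$) and $\dim \Hom(V, T) = \rk V \cdot \mathrm{length}(T)$, shows that this identity can only be satisfied in one of two ways: either $K = k(p)$ (in which case $k(p) \in \NN$ and we are done), or $T = 0$ and $K = V$ is locally free with $\dim \End V = \rk V$.

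The main obstacle is to rule out the second, locally free alternative, and here the hypothesis $g \geq 1$ enters. Set $v = \rk V \geq 1$. Riemann-Roch gives $\chi(V, V) = v^2(1 - g) \leq 0$, so $\dim \Ext^1(V, V) \geq \dim \End V \geq 1$, and I may pick a nonzero class $\eta \in \Ext^1(V, V)$. Let $\GG \in \NN$ be the corresponding nontrivial self-extension $0 \to V \to \GG \to V \to 0$; then $\GG$ is locally free of rank $2v$ and lies in $\NN$ by extension-closure. Applying the adjunction to $\GG$ gives $\dim \Hom(\GG, V) = \dim \Hom(\GG, k(p)) = 2v$. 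On the other hand, applying $\Hom(-, V)$ to the defining sequence of $\GG$ produces a long exact sequence whose connecting map $\End(V) \to \Ext^1(V, V)$ is Yoneda multiplication by $\eta$, sending $\id_V \mapsto \eta \neq 0$; hence this connecting map has rank at least $1$, forcing $\dim \Hom(\GG, V) \leq 2v - 1$. This contradicts the adjunction value of $2v$, eliminates the locally free case, and completes the proof.
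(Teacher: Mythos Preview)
Your proof is correct and takes a genuinely different route from the paper's. The paper argues by contradiction: if $k(p)\notin\NN$ then no object of $\NN$ has torsion at $p$ (otherwise $k(p)$, being a quotient inside $\wide\NN$, would already lie in the tilting nullity class $\NN$), so by Serre duality $\Ext^1(-,k(p))|_\NN=0$; this forces $(k(p))_\NN$ to be $\NN$-injective, whence $\Ext^1((k(p))_\NN,(k(p))_\NN)=0$, contradicting the fact that every nonzero sheaf on a curve of genus $g\geq 1$ has self-extensions. Your approach instead exploits the adjunction identity $\End K\cong\Hom(K,k(p))$ as a numerical constraint, reduces by a direct dimension count to the case where $K$ is locally free with $\dim\End K=\rk K$, and then feeds a nonsplit self-extension $\GG$ of $K$ back through the same adjunction to obtain $2\rk K = \dim\Hom(\GG,k(p)) = \dim\Hom(\GG,K)\leq 2\rk K-1$. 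The paper's proof is shorter and leans on the $\NN$-injective machinery set up in \S\ref{section:Injectives}; yours is more hands-on and self-contained, sidestepping the somewhat implicit step $\Ext^1(-,k(p))|_\NN=0$ at the price of a longer computation. One small caveat: your identities such as $\dim\Hom(V,k(p))=\rk V$ and $\dim\Hom(\GG,k(p))=2v$ assume $[k(p):\bK]=1$; over a non-algebraically-closed base field a factor of $[k(p):\bK]$ appears throughout, but the inequalities and the final contradiction go through with only cosmetic changes.
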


\begin{proof}
Let $X \in \NN$ be a locally free sheaf.  Let $T$ be a simple (and hence torsion) sheaf (thus $T$ is supported on a single point $P \in \bX$), and suppose that $T \not\in \NN$.  It follows from the Riemann-Roch formula that $\Hom(X,T) \not= 0$ so that $T_\NN \not = 0$.

We know that $\Ext(-,T)|_{\NN} = 0$, so that $\Hom(-,T_\NN)$ is right exact and thus $T_\NN$ is an $\NN$-injective.  In particular, $\Ext(T_\NN,T_\NN) = 0$.  A contradiction since every object of $\NN$ has self-extensions.
\end{proof}

\begin{remark}
When $\bX = \bP^1$, then the last line of the proof of Lemma \ref{lemma:NarrowOnCurves} does not hold since $\Ext(\OO_{\bP^1}(n),\OO_{\bP^1}(n)) = 0$ for all $n \in \bZ$.  This will give rise to an additional set of $t$-structures on $\Db \coh \bP^1$, namely those of type II, as seen in Corollary \ref{corollary:AislesProjectiveLine}.
\end{remark}

\begin{proposition}\label{proposition:NarrowOnCurves}
Let $\NN$ be a coreflective narrow subcategory of $\coh \bX$.  Then $\NN$ is a torsion class in $\coh \bX$.  The wide closure $\WW$ of $\NN$ is $\coh \bX$ if and only if $\NN$ contains a nonzero locally free sheaf.
\end{proposition}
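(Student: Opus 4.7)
The plan is to split into two cases depending on whether $\NN$ contains a nonzero locally free sheaf, simultaneously settling the torsion class assertion and the iff.

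In the first case, $\NN$ contains a nonzero locally free sheaf $X$. By Lemma \ref{lemma:NarrowOnCurves}, $\NN$ then contains every torsion sheaf. The main task is to show $\wide \NN = \coh \bX$, after which Corollary \ref{corollary:NarrowIsTorsion} identifies $\NN$ as a tilting torsion class in $\coh \bX$. I would first check that every twist $X(D)$ lies in $\wide \NN$: for effective $D$, the exact sequence $0 \to X \to X(D) \to T \to 0$ with $T$ a torsion sheaf realizes $X(D)$ as an extension in $\wide \NN$, the anti-effective case follows by taking the kernel of a map from $X$ to a torsion cokernel, and the general case by combining the two. Next, for an arbitrary line bundle $L$, I would choose $D$ large enough that Riemann--Roch and Serre vanishing produce a nonzero morphism $X(-D) \to L$; its image is a line subbundle $L(-D')$ of $L$ and lies in $\wide \NN$ as a cokernel, so the previous twisting step gives $L \in \wide \NN$. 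The fact that every vector bundle on a curve admits a filtration by line subbundles then places every vector bundle in $\wide \NN$, and the structure theorem (every coherent sheaf is a direct sum of a vector bundle and a torsion sheaf) yields $\wide \NN = \coh \bX$.

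In the second case, $\NN$ consists only of torsion sheaves, so $\wide \NN$ is contained in the (proper) wide subcategory of torsion sheaves in $\coh \bX$; in particular $\wide \NN \ne \coh \bX$, which gives one direction of the iff. The key step is to classify the wide subcategories of the torsion sheaves. A torsion sheaf decomposes over its finite support, and narrow (hence wide) subcategories are closed under direct summands (given $A \oplus B \in \NN$, $B$ is the cokernel of an idempotent endomorphism of $A \oplus B$), so every such wide subcategory splits as a coproduct over the closed points of $\bX$. For a fixed point $P$, the torsion sheaves supported at $P$ are the finite length modules over the DVR $\OO_{X,P}$, with indecomposables $M_n = \OO_{X,P}/\pi^n$: a nonzero wide subcategory contains some $M_n$, hence contains $M_1$ as the kernel of $\pi : M_n \to M_n$, and then all $M_k$ via iterated nonsplit extensions $0 \to M_1 \to M_{k+1} \to M_k \to 0$. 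Thus $\wide \NN$ is the category of torsion sheaves supported on some subset $Q \subseteq \bX$, which is a Serre subcategory of $\coh \bX$. Since $\NN$ is a nullity class in $\wide \NN$ by Corollary \ref{corollary:NarrowIsPretorsion} and $\wide \NN$ is closed under quotients in $\coh \bX$, $\NN$ is a nullity class in $\coh \bX$, and thus a torsion class because it is coreflective.

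For the remaining direction of the iff, if $\wide \NN = \coh \bX$ then $\NN$ is not contained in the proper wide subcategory of torsion sheaves, so $\NN$ contains some $Z$ with a nonzero locally free summand $L$; closure under direct summands gives $L \in \NN$. I expect the main obstacle to lie in the first case: producing all line bundles inside $\wide \NN$ requires combining the twisting construction with Serre-vanishing inputs, and the bookkeeping between effective and anti-effective divisors and between kernels and cokernels has to be handled with some care. The DVR computation in the second case, while needed, is essentially routine.
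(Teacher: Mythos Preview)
Your proof is correct and follows the same two-case split as the paper, but the arguments within each case differ in interesting ways.

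In the case where $\NN$ contains a locally free sheaf, the paper establishes $\wide \NN = \coh \bX$ in one line by observing that $\wide \NN$ contains the ample sequence $(X(nP))_{n \in \bZ}$ and invoking Polishchuk's result on ample sequences. Your argument is more elementary and self-contained: you produce all twists of $X$ directly, then use Riemann--Roch to map into an arbitrary line bundle, and finish with the line-bundle filtration of vector bundles. Your route avoids the external reference at the cost of more bookkeeping; both are valid, and yours makes the mechanism more transparent.

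In the torsion-only case, the paper simply asserts that $\NN$ is a torsion class as ``easily seen.'' Your DVR computation actually supplies what is needed here: identifying $\wide \NN$ as the Serre subcategory of torsion sheaves supported on some set of points, so that quotients in $\coh \bX$ of objects of $\NN$ remain in $\wide \NN$ and hence in $\NN$ by Corollary~\ref{corollary:NarrowIsPretorsion}. This is a genuine detail the paper omits, and your argument fills the gap cleanly.
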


\begin{proof}
If $\NN$ does not contain a nonzero locally free sheaf (thus $\NN$ is a subcategory of the torsion sheaves), then the wide closure is contained in the torsion sheaves and $\wide \NN \not= \coh \bX$.  In this case, it is easily seen that $\NN$ is a torsion class in $\coh \bX$.

For the other case, assume that $\NN$ contains a locally free sheaf $X$.  By the previous lemma, $\NN$ contains all sheaves of finite length.  Let $T$ be a simple sheaf, supported on $P \in \bX$.

Since the wide closure $\WW$ of $\NN$ contains then the ample sequence $(X(nP))_{n \in \bZ}$ (in the sense of \cite{Polishchuk05}) we see that $\WW = \coh \bX$.  By Corollary \ref{corollary:NarrowIsTorsion} we see that $\NN$ is closed under quotient objects and extensions in $\coh \bX$.  Since $\NN$ is coreflective in $\coh \bX$, we know that $\NN$ is a torsion class (see Corollary \ref{corollary:NarrowIsTorsion}).
\end{proof}

\begin{corollary}
Let $\UU$ be an aisle, and let $\mu(\UU) = (\NN(k))_k$ be the associated (coreflective) narrow sequence.  If $\NN(k) \not= 0$, then $\NN(k+1) = \coh \bX$.
\end{corollary}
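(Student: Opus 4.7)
My plan is to argue by contradiction. Suppose $\NN(k) \neq 0$ but $\NN(k+1) \neq \coh \bX$. Since $\UU$ is an aisle, Proposition~\ref{proposition:RestrictingAdjoints} says that each $\NN(j) = H_j \UU$ is coreflective in $\coh \bX$. Applying Proposition~\ref{proposition:NarrowOnCurves} to $\NN(k+1)$, the failure of $\NN(k+1)$ to equal $\coh \bX$ forces $\NN(k+1)$, and hence also $\NN(k) \subseteq \NN(k+1)$, to consist entirely of torsion sheaves. Inside the nonzero torsion class $\NN(k)$ one can pick a simple sheaf $S$ supported at some closed point $P \in \bX$; iterating the short exact sequences $0 \to S \to \OO_\bX / \mathfrak{m}_P^n \to \OO_\bX / \mathfrak{m}_P^{n-1} \to 0$ and using closure of $\NN(k)$ under extensions, every length-$n$ thickening $M_n := \OO_\bX/\mathfrak{m}_P^n$ lies in $\NN(k)$.

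Next, I would use the aisle $\UU$ to produce a universal approximation of $L[k+1]$ where $L := \OO_\bX$. Set $X := (L[k+1])_\UU \in \UU$. By Lemma~\ref{lemma:HereditaryAdjoints}, $X$ has nonzero homology only in degrees $k$ and $k+1$, so the splitting of objects in a hereditary derived category gives $X \cong A[k+1] \oplus B[k]$ with $A \in \NN(k+1)$ and $B \in \NN(k)$. By the first paragraph, both $A$ and $B$ are torsion sheaves of finite length. Plugging $U := M_n[k] \in \UU$ into the adjunction isomorphism $\Hom(U, X) \cong \Hom(U, L[k+1])$ and expanding in the hereditary derived category yields the key identity
$$\Ext^1(M_n, A) \oplus \Hom(M_n, B) \;\cong\; \Ext^1(M_n, L).$$

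The final step -- and the main technical input -- is a dimension count over the local DVR $R := \OO_{\bX, P}$. Decomposing the $P$-primary parts of $A$ and $B$ as $\bigoplus_i R/\mathfrak{m}^{a_i}$ and $\bigoplus_j R/\mathfrak{m}^{b_j}$, Serre duality ($\Ext^1(-, A) \cong \Hom(A, - \otimes \omega_\bX)^*$) together with the elementary identity $\dim_k \Hom_R(R/\mathfrak{m}^n, R/\mathfrak{m}^a) = \min(n,a)$ bounds $\dim \Ext^1(M_n, A) + \dim \Hom(M_n, B)$ by the sum of the lengths of $A$ and $B$, uniformly in $n$. On the other hand, Serre duality applied to $L = \OO_\bX$ gives $\dim \Ext^1(M_n, L) = \dim \Gamma(M_n \otimes \omega_\bX) = n \cdot [k(P):k]$, which grows without bound. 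This contradicts the displayed isomorphism, completing the proof. The main subtlety is obtaining such strong control over $(L[k+1])_\UU$: it is precisely the hypothesis $\NN(k+1) \neq \coh \bX$ (via Proposition~\ref{proposition:NarrowOnCurves}) together with Lemma~\ref{lemma:HereditaryAdjoints} that forces both summands of the universal approximation to be torsion of finite length, so that the elementary DVR estimate actually applies.
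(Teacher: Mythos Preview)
Your argument has a genuine gap at the very first reduction step. You claim that Proposition~\ref{proposition:NarrowOnCurves} forces $\NN(k+1)$ to consist entirely of torsion sheaves whenever $\NN(k+1)\neq\coh\bX$. But that proposition only says that the \emph{wide closure} $\wide\NN(k+1)$ equals $\coh\bX$ if and only if $\NN(k+1)$ contains a locally free sheaf; it says nothing about $\NN(k+1)$ itself being all of $\coh\bX$. Concretely, on an elliptic curve the torsion class of sheaves whose Harder--Narasimhan factors all have slope $\geq 0$ is a proper coreflective narrow subcategory containing $\OO_\bX$. In that situation your choice $L=\OO_\bX$ already lies in $\NN(k+1)$, so $(L[k+1])_\UU = L[k+1]$, the summand $A=\OO_\bX$ is locally free, and your displayed isomorphism becomes the tautology $\Ext^1(M_n,\OO_\bX)\cong\Ext^1(M_n,\OO_\bX)$---no contradiction.

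The fix is close to what the paper does. First dispose of the case where $\NN(k)$ contains a locally free sheaf: then $\wide\NN(k)=\coh\bX$ by Proposition~\ref{proposition:NarrowOnCurves}, and Corollary~\ref{corollary:GrowingFastEnough} gives $\NN(k+1)=\coh\bX$ immediately. In the remaining case $\NN(k)$ consists only of torsion sheaves (this part of your reduction is fine), but you must choose $L$ to be a nonzero locally free sheaf in the torsionfree class $\NN(k+1)^{\perp_0}$ rather than $\OO_\bX$. Such an $L$ always exists when $\NN(k+1)\neq\coh\bX$, and now the adjunction forces $A=H_{k+1}((L[k+1])_\UU)=0$ outright (since $A\in\NN(k+1)$ is a summand of $(L[k+1])_\UU$ yet $\Hom(A,L)=0$). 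Your dimension count then applies to $B$ alone and does give the contradiction. This endgame is a legitimate alternative to the paper's: the paper instead observes that $\Hom(-,B)|_{\NN(k)}\cong\Ext^1(-,L)|_{\NN(k)}$ is right exact, so $B$ is $\NN(k)$-injective, contradicting the fact that every nonzero object on a curve of genus $\geq 1$ has self-extensions.
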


\begin{proof}
Without loss of generality, we may assume that $\NN(0) \not=0$.  By Lemma \ref{lemma:NarrowOnCurves}, we know that the torsion sheaves of $\coh \bX$ lie in $\NN(0)$.  To show that $\NN(1) = \coh \bX$, it suffices to show that $\NN(1)$ contains every locally free sheaf.  Therefore, seeking a contradiction, assume that $\NN(1) \not= \coh \bX$ and let $X \in \coh \bX$ be a nonzero locally free sheaf in $\NN(1)^{\perp_0}$ (thus $X$ lies in the torsionfree part of the torsion theory $(\NN(1),\NN(1)^{\perp_0})$).  For any nonzero torsion sheaf $T \in \NN(0)$, the Riemann-Roch formula implies that $\Ext(T,X) \not= 0$.

Let $Y = X[1] \in \Db \coh \bX$.  Note that since $\NN(0)$ contains all torsion sheaves, we have that $\Ext(\NN(0),X) \cong \Hom(\NN(0)[0],Y) \not= 0$ so that $Y_\UU \not= 0$.  Lemma \ref{lemma:HereditaryAdjoints} yields that $H_n(Y_\UU) = 0$ for all $n \not= 0,1$.  Since $X \in \NN(1)^{\perp_0}$ we know that $H_n(Y_\UU)=0$ for $n \not= 1$ as well.  We infer that $H_0(Y_\UU) \not= 0$.  This yields a nonzero right exact functor $\Hom(-,H_0(Y_\UU)): \NN(0) \to \mod k$ so that $H_0(Y_\UU)$ must be $\NN(0)$-injective.  But every object has self-extensions.  This shows that there cannot be an object $X$ as specified before and hence $\NN(1) = \coh \bX$.
\end{proof}

As a consequence, we obtain the following result (see originally \cite[Proposition 7.1]{GorodentsevKuleshovRudakov04}).

\begin{theorem}\label{theorem:AislesCurves}
The nondegenerate $t$-structures on $\Db \coh \bX$ up to suspension are induced by nonzero torsion classes in $\coh \bX$.
\end{theorem}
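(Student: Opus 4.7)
The plan is to combine the rigid structure of coreflective narrow subcategories of $\coh \bX$ just established with the machinery of narrow sequences from Section \ref{Section:NarrowSequence}. Given a nondegenerate aisle $\UU \subseteq \Db \coh \bX$, I would set $(\NN(k))_k := \mu(\UU)$; by Propositions \ref{proposition:PreaisleNarrow} and \ref{proposition:RestrictingAdjoints} this is a coreflective narrow sequence in $\coh \bX$. The Corollary immediately preceding the theorem then tells us that as soon as some $\NN(k_0)$ is nonzero, $\NN(k) = \coh \bX$ for every $k > k_0$. Nondegeneracy of the $t$-structure forces $\bigcap_k \NN(k) = 0$ and $\bigcup_k \NN(k) = \coh \bX$, so there is a unique integer $n$ with $\NN(n) \neq 0$ and $\NN(k) = 0$ for $k < n$; combined with the preceding Corollary this pins the sequence down to $\NN(k) = \coh \bX$ for all $k > n$. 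By Proposition \ref{proposition:NarrowOnCurves} the subcategory $\TT := \NN(n)$ is then a nonzero torsion class in $\coh \bX$, and replacing $\UU$ by $\UU[-n]$ we may assume $n = 0$.

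For the converse, I would invoke the standard Happel--Reiten--Smal{\o} construction recalled in Section \ref{subsection:Torsion}: every nonzero torsion class $\TT \subseteq \coh \bX$ gives rise to an aisle
$$\UU_\TT = \{X \in \Db \coh \bX \mid H_k X = 0 \text{ for } k < 0 \text{ and } H_0 X \in \TT\}.$$
A direct computation shows that $\mu(\UU_\TT)$ is precisely the narrow sequence produced in the first paragraph (with $n = 0$). Since aisles are homology-determined by Proposition \ref{proposition:RetractsCoproducts}, Theorem \ref{theorem:NarrowPreaisle} implies that $\mu$ is injective on aisles; hence $\UU$ agrees with $\UU_\TT$ up to suspension, and distinct nonzero torsion classes produce distinct aisles. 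This establishes the claimed correspondence between suspension classes of nondegenerate $t$-structures and nonzero torsion classes.

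The hard part is already absorbed into the results immediately preceding the theorem statement: Lemma \ref{lemma:NarrowOnCurves}, Proposition \ref{proposition:NarrowOnCurves}, and the Corollary just above the theorem are what rule out the richer ``type II'' behaviour witnessed for $\bP^1$ in Corollary \ref{corollary:AislesProjectiveLine}. Those arguments depend essentially on Serre duality, the Riemann--Roch formula, and the existence of nonzero self-extensions for every object on a curve of genus $\geq 1$. Once those are in hand, the proof of Theorem \ref{theorem:AislesCurves} reduces to the bookkeeping argument outlined above.
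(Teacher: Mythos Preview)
Your proposal is correct and matches the paper's approach; the paper in fact gives no proof at all, writing only ``As a consequence, we obtain the following result'' after the preceding Corollary, so your argument is exactly the intended filling-in of details.

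One small imprecision worth tightening: the claim that nondegeneracy directly forces $\bigcup_k \NN(k) = \coh \bX$ is stronger than what you need and not entirely immediate (nondegeneracy of the coaisle side gives $(\bigcup_k \NN(k))^\perp = 0$ in $\coh \bX$, not $\bigcup_k \NN(k) = \coh \bX$). It is cleaner to argue as follows: nondegeneracy implies $\UU \neq 0$ and $\UU \neq \Db \coh \bX$; the first gives some $\NN(k_0) \neq 0$, and the second together with the Corollary (if all $\NN(k)$ were nonzero they would all equal $\coh \bX$) gives some $\NN(k) = 0$. Since the sequence is increasing, there is then a unique transition index $n$, and the Corollary forces $\NN(k) = \coh \bX$ for $k > n$. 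The rest of your argument---identifying $\NN(n)$ as a torsion class via Proposition~\ref{proposition:NarrowOnCurves}, and recovering $\UU$ up to shift as the Happel--Reiten--Smal{\o} aisle via Theorem~\ref{theorem:NarrowPreaisle} and Proposition~\ref{proposition:RetractsCoproducts}---is exactly right.
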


\begin{corollary}
The heart of every nondegenerate $t$-structure in $\Db (\coh \bX)$ is derived equivalent to $\coh \bX$.
\end{corollary}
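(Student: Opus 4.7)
My plan is to combine Theorem \ref{theorem:AislesCurves} with the classical Happel--Reiten--Smal\o{} tilting theorem for hereditary categories. First, by Theorem \ref{theorem:AislesCurves}, any nondegenerate $t$-structure on $\Db \coh \bX$ is, up to suspension, the one whose aisle $\UU$ is induced by a nonzero torsion class $\TT$ in $\coh \bX$, i.e. $\UU = \{X \in \Db \coh \bX \mid H_n X = 0 \text{ for } n<0, \ H_0 X \in \TT\}$ (see Section \ref{subsection:Torsion}). Writing $\FF = \TT^{\perp_0}$, the heart $\HH$ of this $t$-structure is the standard HRS-tilted heart
$$\HH = \{X \in \Db \coh \bX \mid H_0(X) \in \TT,\ H_{-1}(X) \in \FF,\ H_i(X) = 0 \text{ otherwise}\}.$$
Since derived equivalence is invariant under suspension, it suffices to show that $\Db \HH \simeq \Db \coh \bX$.

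Next, I would verify that the natural embedding $\HH \hookrightarrow \Db \coh \bX$ extends to a triangle equivalence $\Db \HH \stackrel{\sim}{\to} \Db \coh \bX$. Essential surjectivity is immediate: every $X \in \coh \bX$ sits in a short exact sequence $0 \to t \to X \to f \to 0$ with $t \in \TT$ and $f \in \FF$, giving a triangle $t \to X \to f \to t[1]$ in $\Db \coh \bX$ with $t \in \HH$ and $f \in \HH[-1]$. Hence $\coh \bX$ lies in the thick closure of $\HH$, and so does $\Db \coh \bX$ by the heredity decomposition $X \cong \coprod_k (H_k X)[k]$ recalled before Lemma \ref{lemma:HereditaryAdjoints}. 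For fully faithfulness one must check that $\Ext^n_\HH(A,B) \cong \Hom_{\Db \coh \bX}(A, B[n])$ for $A,B \in \HH$. Using that $\coh \bX$ is hereditary, every object of $\HH$ splits in $\Db \coh \bX$ as $A \cong A_{-1}[1] \oplus A_0[0]$, reducing the verification to the four cases determined by the position of $A, B$ in $\TT$ or $\FF$, each of which follows from $\Hom_{\coh \bX}(\TT, \FF) = 0$, $\Ext^{\geq 2}_{\coh \bX} = 0$, and an elementary computation of short exact sequences in $\HH$ using the HRS long exact sequences.

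The main obstacle is really just the verification of fully faithfulness in the mixed cases (e.g.\ $A \in \FF[1]$, $B \in \TT[0]$, $n = 1$): here one has to realize an extension class in $\Hom_{\Db \coh \bX}(A, B[1])$ as a short exact sequence in $\HH$, which is the core computational content of the Happel--Reiten--Smal\o{} theorem. Since this theorem (in its hereditary incarnation) applies verbatim to any hereditary abelian category with an arbitrary torsion pair, I would conclude by citing it rather than reproducing the calculation. The upshot is that $\Db \HH \simeq \Db \coh \bX$ for every heart arising in Theorem \ref{theorem:AislesCurves}, proving the corollary.
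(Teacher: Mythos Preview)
Your proposal is correct and matches the paper's intended reasoning: the paper states this corollary without proof immediately after Theorem~\ref{theorem:AislesCurves}, relying on exactly the combination you describe---that theorem reduces every nondegenerate $t$-structure (up to shift) to an HRS-tilt at a torsion pair, and the standard Happel--Reiten--Smal\o{} result (cited as \cite{Happel96}) for hereditary categories then gives the derived equivalence. Your outline of essential surjectivity via the torsion decomposition and of fully faithfulness via the splitting $A \cong A_{-1}[1] \oplus A_0[0]$ is the right way to unpack that citation if one wanted to make the argument self-contained.
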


A case where these torsion theories are explicitly known is the case of genus one curves (see \cite{GorodentsevKuleshovRudakov04}, and also \cite{Burban06, vanRoosmalen08}).  One then obtains an explicit classification of the $t$-structures of $\coh \bX$ where $\bX$ is an elliptic curve.

\providecommand{\bysame}{\leavevmode\hbox to3em{\hrulefill}\thinspace}
\providecommand{\MR}{\relax\ifhmode\unskip\space\fi MR }
\providecommand{\MRhref}[2]{%
  \href{http://www.ams.org/mathscinet-getitem?mr=#1}{#2}
}
\providecommand{\href}[2]{#2}

\end{document}